\documentclass[a4paper]{amsart}
\overfullrule=1mm
\vbadness=10001
\hbadness=10001
\usepackage[usenames,dvipsnames]{xcolor}
\usepackage{amsmath,amsthm,amssymb,latexsym,epic,bbm,mathbbol,comment, mathrsfs}
\usepackage{tikz-cd}
\usetikzlibrary{matrix}
\usepackage{graphicx,enumitem,stmaryrd,color}
\usepackage[all,2cell]{xy}
\xyoption{2cell}
\UseAllTwocells
\usepackage[active]{srcltx}
\usepackage[parfill]{parskip}
\usepackage{url}
\usepackage{hyperref}
\usepackage{mathtools}
\usepackage{scalerel}

\usepackage{tikz}
\usetikzlibrary{fit}
\usetikzlibrary{calc}
\tikzset{%
  highlight/.style={rectangle,rounded corners,fill=red!55,draw,
    fill opacity=0.3,thick,inner sep=0pt}
}

\newcommand{\tikzmarkup}[1]{\tikz[overlay,remember picture] \node [above = 2] (#1) {};}
\newcommand{\tikzmarkdown}[1]{\tikz[overlay,remember picture] \node (#1) {};}
\newcommand{\DrawRedBox}[1][]{%
    \tikz[overlay,remember picture]{
    \node[highlight,fit=(left.north west) (right.south east)] (#1) {};
      }
}
\newcommand{\DrawRedBoxZ}[1][]{%
\tikzset{%
  highlight/.style={rectangle,rounded corners,fill=red!55,draw,
    fill opacity=0.3,thick,inner sep=0pt}
}
    \tikz[overlay,remember picture]{
    \node[highlight,fit=(left2.north west) (right2.south east)] (#1) {};
      }
}

\definecolor{blue(pigment)}{rgb}{0.2, 0.2, 0.6}

\newtheorem{theorem}{Theorem}[section]
\newtheorem*{theorem*}{Theorem}
\newtheorem{lemma}[theorem]{Lemma}
\newtheorem{proposition}[theorem]{Proposition}
\newtheorem{corollary}[theorem]{Corollary}

\theoremstyle{definition}
\newtheorem{definition}[theorem]{Definition}
\newtheorem*{definition*}{Definition}
\newtheorem{remark}[theorem]{Remark}

\newtheorem{example}[theorem]{Example}

\font\sc=rsfs10
\newcommand{\csym}[1]{\sc\mbox{#1}\hspace{1.0pt}}

\font\scc=rsfs7
\newcommand{\ccf}[1]{\scc\mbox{#1}\hspace{0.5pt}}

\newcommand{\cccsym}[1]

\newcommand{\on}[1]{\operatorname{#1}}

\newcommand{\setj}[1]{\left\{ #1 \right\}}
\newcommand{\rr}[1]{\left\llbracket #1 \right\rrbracket}
\newcommand{\xiso}{\xrightarrow{\sim}}
\newcommand{\idem}{complete set of pairwise orthogonal, primitive idempotents}
\newcommand{\zigzag}{\mathrm{Z}_{\rightleftarrows}}
\newcommand{\starv}[1]{\zigzag(S_{#1})}

\DeclareMathAlphabet\EuRoman{U}{eur}{m}{n}
\SetMathAlphabet\EuRoman{bold}{U}{eur}{b}{n}

\begin{document}
\pagestyle{plain}
\title{Simple transitive $2$-representations of $2$-categories associated to self-injective cores}
\author{Mateusz Stroi{\' n}ski}

\begin{abstract}
  Given a finite dimensional algebra $A$, we consider certain sets of idempotents of $A$, called self-injective cores, to which we associate $2$-sub\-ca\-te\-go\-ries of the $2$-category \scaleobj{0.82}{\csym{C}_{A}} of projective bimodules over $A$. We classify the simple transitive $2$-representations of such $2$-subcategories, vastly generalizing the main result of \cite{Zi}.
\end{abstract}
\maketitle
\tableofcontents

\section{Introduction}
 Motivated by the extensive use of categorical actions in various areas of mathematics, notably in famous results such as the introduction of Khovanov homology in \cite{Kh} and the proof of Brou{\' e}'s abelian defect group conjecture for symmetric groups in \cite{CR}, Mazorchuk and Miemietz initiated a systematic study of $2$-representations of the so-called finitary $2$-categories in the series of papers \cite{MM1}, \cite{MM2}, \cite{MM3}, \cite{MM4}, \cite{MM5}, \cite{MM6}.
 
 One of the main developments established in the above listed papers is the introduction of the notion of a simple transitive $2$-representation, which is a $2$-analogue of the classical notion of a simple module. 
 Similarly to the Jordan-H{\" o}lder theorem for simple modules, simple transitive $2$-representations admit a weak Jordan-H{\" o}lder theory.
 This immediately leads to the very natural problem of classifying such $2$-representations, up to (weak) isomorphism. 
 
 One of the first such classification results was given in \cite{MM5}, stating that if $A$ is a self-injective finite dimensional algebra, then the simple transitive $2$-representations of the $2$-category $\csym{C}_{A}$ of projective bimodules over $A$ are exhausted by the so-called cell $2$-representations. 
 Cell $2$-representations are obtained from the cell structure of $\csym{C}_{A}$, which is defined similarly to the cell structure on a semigroup resulting from Green's relations, and can be defined for any finitary $2$-category.
 This classification result was later significantly improved in \cite{MMZ}, where it was shown that one does not need to assume $A$ to be self-injective. 
 
 The $2$-category $\csym{C}_{A}$ is biequivalent to the delooping of the $\Bbbk$-linear monoidal category $\mathcal{C}_{A} = (\on{add}\setj{A, A \otimes_{\Bbbk} A}, \otimes_{A})$. As such, it has a unique object, its $1$-morphisms correspond to $A$-$A$-bimodules in $\on{add}\setj{A, A \otimes_{\Bbbk} A}$, with composition corresponding to the tensor product over $A$, and $2$-morphisms correspond to bimodule homomorphisms. 
 Given a fixed \idem\ $1 = e_{0} + \cdots + e_{k}$ for $A$ and
 subsets $U,V$ of $\setj{e_{0},\ldots, e_{k}}$, it is easy to verify that $\on{add}\setj{A, Ae \otimes fA \; | e \in U, f \in V}$ gives a monoidal subcategory of $\mathcal{C}_{A}$, and hence gives rise to a respective $2$-subcategory $\csym{D}_{U\times V}$ of $\csym{C}_{A}$. 
 If $V = \setj{e_{0},\ldots, e_{k}}$, then $\csym{D}_{U\times V}$ is given by a union of right cells of $\csym{C}_{A}$, and if $U = \setj{e_{0}, \ldots, e_{k}}$, then $\csym{D}_{U\times V}$ is given by a union of left cells.

 The cell structure of $\csym{D}_{U \times V}$ can be understood in terms of that of $\csym{C}_{A}$. However, whenever $U$ and $V$ do not coincide, the crucial symmetry between the left cells and right cells of $\csym{C}_{A}$ breaks upon restriction to $\csym{D}_{U \times V}$.
 In particular, if $U \neq V$, then $\csym{D}_{U\times V}$ is not weakly fiat, so if one is to study its simple transitive $2$-representations, the rich theory of $2$-representations of weakly fiat $2$-categories developed in, among others, \cite{MMMT} and \cite{MMMTZ1}, cannot be applied directly.
 
 When studying simple transitive $2$-representations of $\csym{D}_{U \times V}$, one should expect some of the techniques of \cite{MM5} (and subsequent papers, such as \cite{MZ1}, \cite{MZ2}) to work also in the setting of the $2$-subcategories. At the same time, the lack of cell symmetry may result in the existence of non-cell simple transitive $2$-representations. 
 If that is the case, one may hope that it would be relatively easy to construct non-cell $2$-representations of such $2$-subcategories, and that the constructions used to that end would then generalize to other, more difficult settings.
 
 A special case of the above described problem was studied in \cite{Zi}. Let $A$ be the zigzag algebra on a star-shaped graph $\Gamma$ on $k+1$ vertices $\setj{0,1,\ldots, k}$, with the unique internal node labelled by $0$. 
 Let $U = \setj{e_{0},e_{1},\ldots, e_{k}}$ and  $V = \setj{e_{0}}$. In this case, the existence of non-cell simple transitive $2$-representations of $\csym{D}_{U \times V}$ was conjectured in \cite{Zi}, and later positively verified in \cite{St}.
 In contrast to this,
 \cite[Theorem~6.2]{Zi} shows that if we instead choose $U = \setj{e_{0}}$ and $V = \setj{e_{0},\ldots e_{k}}$, then any simple transitive $2$-representation is equivalent to a cell $2$-representation.
 
 In order to generalize \cite[Theorem~6.2]{Zi}, we avoid the explicit calculations used in the proof given in \cite{Zi}, and give a clearer connection between that proof and the fact that the zigzag algebra is self-injective. To that end, we observe that
 we only need to use the self-injective property when speaking of the idempotents in $U$, which motivates the main definition of the document:
 \begin{definition*}[Definition \ref{MainDefinition}]
  A subset $U \subseteq \setj{e_{0},\ldots, e_{k}}$ is a {\it self-injective core} if, for every $e \in U$, there is $f \in U$ such that $Ae \simeq (fA)^{\ast}$.
 \end{definition*}

 Our main result is the following:
 \begin{theorem*}[Theorem \ref{MainResult}]
  Let $U \subseteq V \subseteq \setj{e_{0},e_{1},\ldots, e_{k}}$, with $U$ a self-injective core for $A$. Then any simple transitive $2$-representation of $\csym{D}_{U,V}$ is equivalent to a cell $2$-representation.
 \end{theorem*}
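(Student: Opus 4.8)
The plan is to reduce the problem to the self-injective situation treated in \cite{MM5} and \cite{MMZ}, by isolating the part of $\csym{D}_{U,V}$ on which the self-injective core hypothesis restores adjunctions. First I would record the cell structure. Writing the non-identity indecomposable $1$-morphisms as $F_{e,f} = Ae \otimes_{\Bbbk} fA$ with $e \in U$ and $f \in V$, the isomorphism $Ae \otimes_{\Bbbk} fA \otimes_{A} Ae' \otimes_{\Bbbk} f'A \cong (Ae \otimes_{\Bbbk} f'A)^{\oplus \dim_{\Bbbk} fAe'}$ shows that composition on the right fixes the left index and composition on the left fixes the right index. Hence right cells are indexed by $U$, left cells by $V$, and (for $A$ connected) the $F_{e,f}$ form a single two-sided cell $\mathcal{J}$ lying above the cell $\setj{A}$. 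Every left cell of $\mathcal{J}$ therefore has $|U|$ elements, and a direct computation shows that the action matrix of $F_{e,g}$ on the cell $2$-representation $\mathbf{C}_{\mathcal{L}}$ of any left cell $\mathcal{L} \subseteq \mathcal{J}$ has $(e'',e')$-entry $\delta_{e'',e}\dim_{\Bbbk} gAe'$, independent of $\mathcal{L}$. In particular all these cell $2$-representations agree and are equivalent, so it suffices to compare $\mathbf{M}$ with $\mathbf{C}_{\mathcal{L}}$ for a single, conveniently chosen $\mathcal{L}$.

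The decisive input is that the self-injective core hypothesis makes the full sub-$2$-category $\csym{D}_{U,U}$ weakly fiat. Indeed, the core condition makes $e \mapsto \phi(e)$ with $Ae \cong (\phi(e)A)^{\ast}$ a bijection of $U$; dualizing gives $(Ae)^{\ast} \cong \phi(e)A$, and surjectivity of $\phi$ provides, for each $g \in U$, some $h \in U$ with $(gA)^{\ast} \cong Ah$. Hence $(F_{e,g})^{\ast} \cong (gA)^{\ast} \otimes_{\Bbbk} (Ae)^{\ast} \cong Ah \otimes_{\Bbbk} \phi(e)A = F_{h,\phi(e)}$ again lies in $\csym{D}_{U,U}$ and furnishes a biadjoint. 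Since $U \subseteq V$, this sub-$2$-category sits inside $\csym{D}_{U,V}$, and every $1$-morphism whose right index lies in $U$---in particular the Duflo $1$-morphism of a left cell $\mathcal{L}_f$ with $f \in U$---has an adjoint inside $\csym{D}_{U,V}$, even though $\csym{D}_{U,V}$ itself is not weakly fiat when $U \neq V$.

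With these adjunctions in hand I would take a simple transitive $2$-representation $\mathbf{M}$, let $\mathcal{J}$ be its apex, and fix a left cell $\mathcal{L} = \mathcal{L}_f$ with $f \in U$, so that its Duflo $1$-morphism lies in the weakly fiat $\csym{D}_{U,U}$. The Duflo $1$-morphism then singles out a distinguished object $X \in \mathbf{M}$ and yields a morphism of $2$-representations $\Phi \colon \mathbf{C}_{\mathcal{L}} \to \mathbf{M}$, which is essentially surjective by transitivity of $\mathbf{M}$. To see that $\Phi$ is an equivalence I would argue as in \cite{MM5}: the functors $\mathbf{M}(F_{e,g})$ with $e,g \in U$ are biadjoint, hence exact and positive, and combining positivity with the Perron--Frobenius theorem and the irreducibility coming from transitivity forces the Grothendieck group of $\mathbf{M}$ to have rank $|U|$ and the matrices of these $F_{e,g}$ to match those on $\mathbf{C}_{\mathcal{L}}$. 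Since $\Phi$ intertwines the two actions, matching ranks and matrices make $\Phi$ an equivalence of the underlying categories.

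The main obstacle is precisely the failure of $\csym{D}_{U,V}$ to be weakly fiat: the functors $\mathbf{M}(F_{e,f})$ with $f \in V \setminus U$ possess no adjoint and so lie outside the scope of the Perron--Frobenius argument. The point to verify is that they nonetheless require no separate analysis. On the one hand, they do not enlarge $\mathbf{M}$: by the composition rule the image of $F_{e,f}$ is built from objects indexed by the left index $e \in U$, so these functors preserve the $|U|$-dimensional Grothendieck group and cannot disturb the rank count or the irreducibility used above. On the other hand, $\Phi$ is a morphism of $2$-representations of the \emph{whole} $\csym{D}_{U,V}$, so once it is an equivalence of underlying categories it automatically intertwines $\mathbf{M}(F_{e,f})$ and $\mathbf{C}_{\mathcal{L}}(F_{e,f})$ for every $f \in V$. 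Thus the self-injective core supplies exactly the adjunctions needed to pin down the $U$-indexed part by the self-injective argument, while the asymmetric directions $V \setminus U$ are carried along for free; making these two claims rigorous---above all, confirming that the loss of adjoints for $f \in V \setminus U$ neither produces additional simple transitive $2$-representations nor breaks the rank count---is where the real work lies.
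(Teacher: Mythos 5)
Your overall strategy---exploit that the self-injective core makes the diagonal part $\csym{D}_{U\times U}$ weakly fiat (this is the paper's Corollary \ref{DiagonalWeaklyFiat}) and then transport the self-injective classification along the inclusion $\csym{D}_{U\times U}\subseteq \csym{D}_{U\times V}$---is exactly the paper's point of departure, but the proposal has a genuine gap at the step you yourself flag at the end, and it is not a formality. Transitivity of $\mathbf{M}$ over $\csym{D}_{U\times V}$ only makes the matrix $[\mathrm{F}_{\mathcal{S}(\ccf{D})}]_{\mathbf{M}}$ positive, and this sum is dominated by the functors $\mathrm{F}_{e,f}$ with $f\in V\setminus U$, precisely the ones without adjoints; it gives no irreducibility for the $U$-indexed functors alone. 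A priori $\on{Res}_{\ccf{D}_{U\times U}}^{\ccf{D}}(\mathbf{M})$ has a weak Jordan--H\"older series with several simple transitive subquotients, so $[\mathrm{F}_{U\times U}]_{\mathbf{M}}$ could be block-decomposable and the rank of $\mathbf{M}$ could exceed $|U|$; your assertion that the $\mathrm{F}_{e,f}$ ``preserve the $|U|$-dimensional Grothendieck group'' presupposes the rank count you are trying to establish, so the Perron--Frobenius step is circular as written. The paper closes this hole with two dedicated results: Proposition \ref{DExtUDiagonal}, the vanishing of discrete extensions between the cell $2$-representations of $\csym{D}_{U\times U}$ (adapted from \cite[Theorem~6.22]{CM}, and this is where the self-injective core hypothesis does real work), which yields the block-diagonal normal form of Corollary \ref{UDiagonalMatrices}; and Proposition \ref{TransitiveRestriction}, which multiplies that block-diagonal matrix by the positive matrix $[\mathrm{F}_{\mathcal{S}(\ccf{D})}]_{\mathbf{M}}$ to force a single block, i.e.\ transitivity of the restriction. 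Separately, your cell-structure claim is wrong in general: the $\mathrm{F}_{e,f}$ need not form a single $J$-cell, since when $e_{f}Ae_{h}=0$ for all $h\in U$ one obtains maximal non-idempotent ``vacuous'' $J$-cells (Proposition \ref{VacuousCellsExist}); this is harmless only because such cells are annihilated by every simple transitive $2$-representation.

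The second gap is the passage from matched ranks and action matrices to an equivalence. First, Duflo involutions are defined for weakly fiat $2$-categories, and $\csym{D}_{U\times V}$ is not weakly fiat when $U\neq V$, so ``the Duflo $1$-morphism of $\mathcal{L}_{f}$'' and the standard construction of $\Phi\colon \mathbf{C}_{\mathcal{L}}\to\mathbf{M}$ are not available off the shelf; the paper instead builds the comparison inside Lemma \ref{StdArg} via the Yoneda lemma for the principal $2$-representation, which needs no adjunctions. Second, equality of decategorified data does not determine the $2$-representation: one must control Hom-spaces. The paper does this by showing the diagonal $1$-morphisms act by projective functors (Lemma \ref{DiagonalActionProjective}, imported from \cite[Theorem~3.1]{Zi}), pinning down $\mathbf{M}\mathrm{F}_{ij}\simeq \mathrm{G}_{ij}^{\oplus m_{ij}}$ (Lemma \ref{MZ2Mimic}, following \cite{MZ2}), and then running the Cartan-matrix sandwich \eqref{CartanMatrixEqualities}--\eqref{IneqConsequence}, where the inequality comes from the subquotient structure of the restriction, to force $m_{ij}=1$ and $\mathtt{C}^{Q}=\mathtt{C}^{eAe}$; only then does Lemma \ref{StdArg}'s dimension count upgrade the faithful comparison functor to an equivalence. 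None of these steps appears in your sketch, and your closing sentence correctly identifies their absence: as it stands the proposal is a plausible plan whose two load-bearing components---the discrete-extension/transitivity argument and the projective-functor-plus-Cartan argument---are missing.
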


 In particular, the above theorem classifies simple transitive $2$-representations whenever $V = \setj{e_{0},e_{1},\ldots, e_{k}}$ and $A$ is self-injective, which implies \cite[Theorem~6.2]{Zi}. The generalization is vast: from a fixed choice of $A$ together with fixed choices of $U$ and $V$, we arrive at a general condition for $A$ and $U$, with an abundance of instances going beyond the setting of \cite{Zi}.
 
 The paper is structured as follows. Section \ref{s2} contains the necessary preliminaries for $2$-representation theory. In Section \ref{s3}, we give a more detailed account of the $2$-categories of the form $\csym{D}_{U\times V}$, including their cell structure and the structure of their cell $2$-representations. Section \ref{s4} consists of the proof of the main theorem. In Section~\ref{s5}, we describe an example illustrating how $2$-categories of the form $\csym{D}_{U \times V}$ appear naturally in the context of general fiat $2$-categories.
 ~\\
 
 \noindent \textbf{Acknowledgments.} This research is partially supported by G{\" o}ran Gustafssons Stiftelse. The author would like to thank the referee for insightful comments, and his advisor, Volodymyr Mazorchuk, for many helpful discussions.

\section{Preliminaries}\label{s2}

Throughout, we let $\Bbbk$ be an algebraically closed field. By $(-)^{\ast}$ we denote the duality functor $\on{Hom}_{\Bbbk}(-,\Bbbk)$.
Given integers $m,n$ with $m < n$, we let $\rr{m,n}$ denote the set $\setj{m,m+1,\ldots,n}$. Further, let $\rr{n} := \rr{1,n}$.

\subsection{\texorpdfstring{ $2$-categories and their $2$-representations}{ 2-categories and their 2-representations}}\label{s1.1}

 We say that a category $\mathcal{C}$ is {\it finitary over $\Bbbk$} if it is additive, idempotent split, $\Bbbk$-linear, and has finitely many isomorphism classes of indecomposable objects. 

 \begin{definition}
 A $2$-category $\csym{C}$ is {\it finitary over $\Bbbk$} if
 \begin{itemize}
  \item it has finitely many objects;
  \item for every $\mathtt{i,j} \in \csym{C}$, the category $\csym{C}(\mathtt{i,j})$ is finitary;
  \item horizontal composition in $\csym{C}$ is $\Bbbk$-bilinear;
  \item for any object $\mathtt{i} \in \csym{C}$, the identity $1$-morphism $\mathbb{1}_{\mathtt{i}}$ is an indecomposable object of $\csym{C}(\mathtt{i,i})$.
 \end{itemize}
 \end{definition}
 For the remainder of this section, let $\csym{C}$ be a $2$-category which is finitary over $\Bbbk$.
Following \cite{MM2}, we consider the following $2$-categories:
\begin{itemize}
 \item $\mathfrak{A}_{\Bbbk}$ - the $2$-category whose objects are small, finitary (over $\Bbbk$) categories, \mbox{$1$-morphisms} are additive $\Bbbk$-linear functors between such categories and whose $2$-morphisms are all natural transformations between such functors;
 \item $\mathfrak{R}_{\Bbbk}$ - the $2$-category whose objects are small abelian $\Bbbk$-linear categories, \mbox{$1$-morphisms} are right exact $\Bbbk$-linear functors between such categories and whose $2$-morphisms are all natural transformations between such functors.
\end{itemize}

\begin{definition}
 A {\it finitary $2$-representation} of $\csym{C}$ is a $2$-functor $\mathbf{M}$ from $\csym{C}$ to $\mathfrak{A}_{\Bbbk}$, such that $\mathbf{M}_{\mathtt{i,j}}$ is $\Bbbk$-linear, for all $\mathtt{i,j} \in \on{Ob}\csym{C}$. 
An {\it abelian $2$-representation} of $\csym{C}$ is such a $2$-functor whose codomain is $\mathfrak{R}_{\Bbbk}$.
\end{definition}

Together with strong transformations and modifications, the collection of finitary $2$-representations of $\csym{C}$ forms a $2$-category $\csym{C}\!\on{-afmod}$. 
Similarly, we obtain the $2$-category $\csym{C}\!\on{-mod}$ of abelian $2$-representations of $\csym{C}$, see \cite[Section~2.3]{MM3} for details.
We say that two $2$-representations $\mathbf{M},\mathbf{N}$ are {\it equivalent} if there exists an invertible strong transformation $\mathbf{M} \rightarrow \mathbf{N}$.
In particular, a strong transformation $\Phi \in \csym{C}\!\on{-afmod}(\mathbf{M},\mathbf{N})$ such that all of its components are equivalences of categories, is invertible, as shown for instance in \cite[Proposition 2]{MM3}.

A {\it $\csym{C}$-stable ideal} $\mathbf{I}$ is a tuple $(\mathbf{I}(\mathtt{i}))_{\mathtt{i} \in \on{Ob}\ccf{C}}$ of ideals $\mathbf{I}(\mathtt{i})$ of $\mathbf{M}(\mathtt{i})$, such that, for any morphism $X \xrightarrow{f} Y$ in $\mathbf{I}(\mathtt{i})$ and any $1$-morphism $\mathrm{F} \in \on{Ob}\csym{C}(\mathtt{i,j})$, the morphism $\mathbf{M}\mathrm{F}(f)$ lies in $\mathbf{I}(\mathtt{j})$.

\begin{definition}
A $2$-representation is said to be {\it transitive} if, for any indecomposable $X \in \on{Ob}\mathbf{M}(\mathtt{j})$ and $Y \in \on{Ob}\mathbf{M}(\mathtt{k})$, there is a $1$-morphism $\mathrm{G}$ in $\csym{C}(\mathtt{j,k})$ such that $Y$ is isomorphic to a direct summand of $\mathbf{M}(\mathrm{F})X$. 
A $2$-representation is {\it simple transitive} if it admits no non-trivial $\csym{C}$-stable ideals. 
\end{definition}

As shown in \cite[Lemma~4]{MM5}, every transitive $2$-representation admits a unique simple transitive quotient.
Further, from \cite[Section~4.2]{MM5} it follows that a simple transitive $2$-representation is transitive.

 If $\csym{C}$ has a unique object $\mathtt{i}$, we define the {\it rank} of a finitary $2$-representation $\mathbf{M}$ of $\csym{C}$ as the number of isoclasses of indecomposable objects of $\mathbf{M}(\mathtt{i})$. If $\csym{C}$ has multiple objects, one can define the rank as a tuple of positive integers. 

We say that $\csym{C}$ is {\it weakly fiat} if every $1$-morphism of $\csym{C}$ admits both left and right adjoints, giving rise to a weak antiautomorphism $(-)^{\ast}$ of finite order, see \cite[Section~2.5]{MM6}. If $(-)^{\ast}$ is involutive, we say that $\csym{C}$ is {\it fiat}. 

\subsection{Abelianization}

Given a finitary $2$-representation $\mathbf{M}$ of $\csym{C}$, let $\overline{\mathbf{M}}$ denote the {\it projective abelianization} of $\mathbf{M}$, as initially defined in \cite{MM2}, using the projective abelianization of finitary categories given in \cite{Fr}.

The abelianization is an abelian $2$-representation of $\csym{C}$ associated to $\mathbf{M}$, and, more generally, we obtain a $2$-functor $\overline{\,\,\cdot\,\,}: \csym{C}\!\on{-afmod} \rightarrow \csym{C}\!\on{-mod}$.
The finitary $2$-repre\-senta\-tion $\mathbf{M}$ can be recovered from $\overline{\mathbf{M}}$ by restriction to certain subcategories of $\overline{\mathbf{M}}(\mathtt{i})$ equivalent to $\overline{\mathbf{M}}(\mathtt{i})\!\on{-proj}$. If $\csym{C}$ is weakly fiat, then, since $\overline{\mathbf{M}}$ is abelian, 
the functor $\overline{\mathbf{M}}\mathrm{F}$ is exact, for any $1$-morphism $\mathrm{F}$ of $\csym{C}$.
An improved construction was given in \cite[Section~3]{MMMT}. For our purposes, either of the two constructions can be used.

As observed in \cite[Section~3.1]{MM1}, there are finite dimensional algebras $A_{\mathtt{i}}$ such that we have $\mathbf{M}(\mathtt{i})\simeq A_{\mathtt{i}}\!\on{-proj}$ and $\overline{\mathbf{M}}(\mathtt{i})\simeq A_{\mathtt{i}}\!\on{-mod}$, for all $\mathtt{i} \in \on{Ob}\csym{C}$. 
Since, for a $1$-morphism $\mathrm{F}$ of $\csym{C}(\mathtt{i,j})$, the functor $\overline{\mathbf{M}}\mathrm{F}$ is right-exact, it follows by the Eilenberg-Watts theorem that there is a finitely generated $A_{\mathtt{j}}$-$A_{\mathtt{i}}$-bimodule $F$ such that 
$\mathbf{M}\mathrm{F}$ corresponds to $F \otimes_{A_{\mathtt{i}}} - $.
Similarly, for a $2$-morphism $\alpha: \mathrm{F} \rightarrow \mathrm{G}$, the natural transformation $\mathbf{M}\alpha$ can be identified with a bimodule homomorphism.

\subsection{Cells and cell 2-representations}\label{CellSection}

Let $\mathcal{S}(\csym{C})$ denote the set of isomorphism classes of indecomposable $1$-morphisms of $\csym{C}$. Given an object $X$ of a category $\mathcal{C}$, we denote its isomorphism class by $[X]$.
If $\mathtt{K}$ is a set of isomorphism classes of objects in a category $\mathcal{C}$, and $X$ is an object of $\mathcal{C}$, we will sometimes abuse notation and write $X \in \mathtt{K}$ for $[X] \in \mathtt{K}$.
In particular, if $\mathrm{F}$ is an indecomposable $1$-morphism of $\csym{C}$, we may write $\mathrm{F} \in \mathcal{S}(\csym{C})$.

Following \cite{MM2}, given $\mathrm{F},\mathrm{G} \in \mathcal{S}(\csym{C})$, we write $\mathrm{F} \geq_{L} \mathrm{G}$ if there is a $1$-morphism $\mathrm{H}$ such that $\mathrm{F}$ is isomorphic to a direct summand of $\mathrm{H} \circ \mathrm{G}$. This gives the {\it left preorder} $L$ on $\mathcal{S}(\csym{C})$. The {\it right preorder} $R$ and the {\it two-sided preorder} $J$ are defined similarly. The equivalence classes of the induced equivalence relations are called the left, right and two-sided {\it cells} respectively. (Alternatively, $L$-cells, $R$-cells and $J$-cells.)
 
Let $\mathcal{L}$ be a left cell of $\csym{C}$. We briefly summarize the construction of the {\it cell $2$-representation $\mathbf{C}_{\mathcal{L}}$}, following \cite[Section~3.3]{MM5}. There is a unique $\mathtt{i} \in \on{Ob}\csym{C}$ such that $\mathrm{F} \in \mathcal{L}$ implies that $\mathtt{i}$ is the domain of $\mathrm{F}$.
 Consider the $2$-functor $\mathbf{P}_{\mathtt{i}} := \csym{C}(\mathtt{i},-)$. This $2$-functor gives a finitary $2$-representation of $\csym{C}$, and the additive closure of the set of $1$-morphisms $\mathrm{F}$ satisfying $\mathrm{F} \geq_{L} \mathcal{L}$ gives a $2$-subrepresentation $\mathbf{K}_{\mathcal{L}}$ of $\mathbf{P}_{\mathtt{i}}$. 
 Taking the quotient of the latter by the ideal generated by $\setj{\on{id}_{\mathrm{F}} \; | \; \mathrm{F} >_{L} \mathcal{L}}$ gives a transitive $2$-representation $\mathbf{N}_{\mathcal{L}}$.
 The cell $2$-representation $\mathbf{C}_{\mathcal{L}}$ is the unique simple transitive quotient of $\mathbf{N}_{\mathcal{L}}$.
 
 A two-sided cell $\mathcal{J}$ of $\csym{C}$ is said to be {\it idempotent} if there are non-zero $1$-morphisms $\mathrm{F},\mathrm{G},\mathrm{H} \in \mathcal{J}$ such that $\mathrm{H}$ is isomorphic to a direct summand of $\mathrm{G} \circ \mathrm{F}$. 
 As was shown in \cite[Lemma~3]{CM}, the set of $J$-cells of $\csym{C}$ not annihilating a fixed transitive $2$-representation $\mathbf{M}$ admits a $J$-greatest element, called the {\it apex of $\mathbf{M}$}.
 The apex of a transitive $2$-representation must be idempotent, and it coincides with the apex of its unique simple transitive quotient.
 
 We now formulate and give a proof of a statement often implicitly used in the literature, for instance in \cite[Proposition~9]{MM5}, \cite{MZ1}, \cite{Zi}. 
 
 Assume that $\csym{C}$ has a single object $\mathtt{i}$, and that $\csym{C}$ admits an idempotent $J$-cell $\mathcal{J}$.
 Let $\mathbf{M}$ be a simple transitive $2$-representation with apex $\mathcal{J}$. 
 Fix a complete, irredundant set $\mathtt{X} = \setj{X_{1},\ldots, X_{n}}$ of representatives of isomorphism classes of indecomposable objects of $\mathbf{M}(\mathtt{i})$. By construction of projective abelianization, $\bigoplus_{k=1}^{n}X_{k}$ is a projective generator for $\overline{\mathbf{M}}(\mathtt{i}) \simeq A_{\mathtt{i}}\!\on{-mod}$. Let $Q:= A_{\mathtt{i}}$. We conclude that
 $
  \mathbf{M}(\mathtt{i}) \simeq Q\!\on{-proj},
 $
 and $Q = \on{End}(\bigoplus_{k=1}^{n}X_{k})^{\on{op}}$.
 Further, the set $\setj{f_{k}}_{k=1}^{n} :=\setj{\on{id}_{X_{k}}}_{k=1}^{n}$ is a \idem\ for $Q$. Choose a left cell $\mathcal{L} = \setj{\mathrm{F}_{1},\ldots,\mathrm{F}_{r}}$ in $\mathcal{J}$. Finally, recall that the {\it Cartan matrix of $\mathbf{M}(\mathtt{i})$ with respect to $\mathtt{X}$} (and similarly for any finitary category with a fixed list of indecomposables) is the $n \times n$ matrix whose $(i,j)$th entry is given by $\on{dim}\on{Hom}_{\mathbf{M}(\mathtt{i})}(X_{i},X_{j})$.
 \begin{lemma}\label{StdArg}
 If there is an ordering of $\mathtt{X}$ such that
 \begin{itemize}
  \item The Cartan matrix of $\mathbf{M}(\mathtt{i})$ is equal to that of $\on{add}(\mathcal{L})$;
  \item There is an index $l \in \rr{n}$ such that, for $k \in \rr{n}$, the functor $\mathbf{M}\mathrm{F}_{k}$ is naturally isomorphic to the functor $Qf_{k} \otimes_{\Bbbk} f_{l}Q \otimes_{Q} -$.
 \end{itemize}
  then $\mathbf{M}$ is equivalent to the cell $2$-representation $\mathbf{C}_{\mathcal{L}}$.
\end{lemma}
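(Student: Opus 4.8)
The plan is to realise $\mathbf{M}$ as a quotient of the transitive $2$-representation $\mathbf{N}_{\mathcal{L}}$ and then invoke the uniqueness of simple transitive quotients. I would begin from the principal $2$-representation $\mathbf{P}_{\mathtt{i}} = \csym{C}(\mathtt{i},-)$ and use its universal property (the $2$-Yoneda lemma): evaluation at the distinguished indecomposable $X_{l} \in \mathbf{M}(\mathtt{i})$ determines a morphism $\Phi \colon \mathbf{P}_{\mathtt{i}} \to \mathbf{M}$ with $\Phi_{\mathtt{i}}(\mathbb{1}_{\mathtt{i}}) = X_{l}$, so that $\Phi_{\mathtt{i}}(\mathrm{F}) \cong \mathbf{M}\mathrm{F}(X_{l})$ for every $1$-morphism $\mathrm{F}$. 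Restricting $\Phi$ to the subrepresentation $\mathbf{K}_{\mathcal{L}}$ and checking that it annihilates the ideal used to form $\mathbf{N}_{\mathcal{L}}$ then yields the comparison morphism $\Psi \colon \mathbf{N}_{\mathcal{L}} \to \mathbf{M}$ to be analysed.

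The first point to verify is that $\mathbf{M}\mathrm{F} = 0$ for every $\mathrm{F} >_{L} \mathcal{L}$, so that $\Phi|_{\mathbf{K}_{\mathcal{L}}}$ descends to $\mathbf{N}_{\mathcal{L}}$: indeed then $\Phi(\on{id}_{\mathrm{F}}) = \on{id}_{\mathbf{M}\mathrm{F}(X_{l})} = 0$. This uses the apex hypothesis together with the standard triviality of the left preorder inside a two-sided cell, namely that $\mathrm{F} >_{L} \mathcal{L}$ forces $\mathrm{F} \notin \mathcal{J}$ while $\mathrm{F} \geq_{J} \mathcal{J}$, whence $\mathrm{F} >_{J} \mathcal{J}$ lies strictly above the apex and acts by zero. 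Granting this, $\Psi$ exists, and the second hypothesis computes it on objects: evaluating the bimodule functor $Qf_{k} \otimes_{\Bbbk} f_{l}Q \otimes_{Q} -$ at $X_{l} = Qf_{l}$ gives $\Psi(\mathrm{F}_{k}) \cong Qf_{k} \otimes_{\Bbbk} f_{l}Qf_{l}$, a sum of copies of $X_{k}$. As $k$ ranges over $\rr{n}$ (so that, after reindexing, $\mathcal{L} = \setj{\mathrm{F}_{1},\ldots,\mathrm{F}_{n}}$ matches the list $\mathtt{X}$), every indecomposable of $\mathbf{M}(\mathtt{i})$ lies in the essential image, so $\Psi$ is dense; after recording that $f_{l}Qf_{l} = \End_{\mathbf{M}(\mathtt{i})}(X_{l})$ reduces to $\Bbbk$ for the distinguished index $l$, $\Psi$ sends the indecomposable $\mathrm{F}_{k}$ to the indecomposable $X_{k}$, hence is a bijection on isomorphism classes of indecomposables.

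The crux, and the step I expect to be the main obstacle, is to upgrade the faithful dense functor $\overline{\Psi} \colon \mathbf{N}_{\mathcal{L}}/\ker\Psi \to \mathbf{M}$ to an equivalence, that is, to prove it full. For this I would use the explicit bimodule form of the functors to compute the space of natural transformations $\on{Nat}(\mathbf{M}\mathrm{F}_{a}, \mathbf{M}\mathrm{F}_{b})$ together with its evaluation at $X_{l}$, thereby identifying the image of $\Hom_{\csym{C}}(\mathrm{F}_{a},\mathrm{F}_{b})$ in $\Hom_{\mathbf{M}}(X_{a},X_{b})$; the first hypothesis, $\dim\Hom_{\mathbf{M}}(X_{a},X_{b}) = \dim\Hom_{\csym{C}}(\mathrm{F}_{a},\mathrm{F}_{b})$, should then force this image to exhaust $\Hom_{\mathbf{M}}(X_{a},X_{b})$, so that no $\Hom$-space collapses and $\overline{\Psi}$ is full. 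The delicate part is exactly this reconciliation: both the faithfulness of $\overline{\Psi}$ and the construction of $\mathbf{N}_{\mathcal{L}}$ only bound the source $\Hom$-spaces \emph{above} by the common Cartan entry, so equality must be extracted from the explicit action in condition (2) by exhibiting enough genuinely realised $2$-morphisms, rather than from a naive dimension count. Once fullness is established, $\overline{\Psi}$ is an equivalence, which realises the simple transitive $\mathbf{M}$ as a quotient of $\mathbf{N}_{\mathcal{L}}$; by the uniqueness of the simple transitive quotient, $\ker\Psi$ is the maximal $\csym{C}$-stable ideal, and therefore $\mathbf{M} \simeq \mathbf{N}_{\mathcal{L}}/\ker\Psi = \mathbf{C}_{\mathcal{L}}$.
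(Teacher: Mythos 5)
Your overall skeleton matches the paper's proof (Yoneda morphism out of $\mathbf{P}_{\mathtt{i}}$, restriction to $\mathbf{K}_{\mathcal{L}}$, descent to $\mathbf{N}_{\mathcal{L}}$ using $\mathrm{F} >_{L} \mathcal{L} \Rightarrow \mathrm{F} >_{J} \mathcal{J}$, kernel contained in the maximal ideal, Cartan-matrix comparison), but there is a genuine gap at the very first step: you evaluate at the indecomposable object $X_{l}$ itself, and your argument then hinges on the claim that $f_{l}Qf_{l} = \on{End}_{\mathbf{M}(\mathtt{i})}(X_{l})$ ``reduces to $\Bbbk$''. This is false in general: $f_{l}Qf_{l}$ is local but typically has nilpotent part, and in the paper's intended applications it genuinely does --- for instance, when $Q \simeq e_{U}Ae_{U}$ for a zigzag algebra $A$, the local algebra $f_{l}Qf_{l}$ is two-dimensional. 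Consequently $\Psi(\mathrm{F}_{k}) \simeq \mathbf{M}\mathrm{F}_{k}X_{l} \simeq Qf_{k} \otimes_{\Bbbk} f_{l}Qf_{l} \simeq X_{k}^{\oplus \dim f_{l}Qf_{l}}$ is decomposable, $\Psi$ does not match indecomposables to indecomposables, and every Hom-space comparison acquires a factor $(\dim f_{l}Qf_{l})^{2}$, which wrecks the Cartan count. The paper's fix is exactly the device you omit: hypothesis (2) guarantees that $\overline{\mathbf{M}}\mathrm{F}$ sends arbitrary objects to projectives, so $\overline{\mathbf{M}}\!\on{-proj}$ is a finitary $2$-representation equivalent to $\mathbf{M}$, and one applies Yoneda with $\mathbb{1}_{\mathtt{i}} \mapsto L_{l}$, the \emph{simple top} of $X_{l}$ in the abelianization. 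Then $f_{l}Q \otimes_{Q} L_{l} \simeq \Bbbk$, so $\overline{\mathbf{M}}\mathrm{F}_{k}L_{l} \simeq Qf_{k}$ is indecomposable, and the multiplicity problem disappears.

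Your diagnosis of the fullness step is also off the mark, in an instructive way: you assert that both available estimates bound the relevant Hom-spaces from above, so that equality ``must be extracted from the explicit action in condition (2)'' by computing spaces of natural transformations --- and you leave that computation unexecuted, so the crux of your proof is missing. The paper needs no such computation. Since $\mathrm{F}_{k}L_{l} \neq 0$, the kernel of $\sigma: \mathbf{N}_{\mathcal{L}} \rightarrow \overline{\mathbf{M}}\!\on{-proj}$ contains no identity $2$-morphisms, hence $\on{Ker}\sigma \subseteq \mathbf{I}$, and this containment supplies a genuine \emph{lower} bound: the Cartan entries of the cell $2$-representation $\mathbf{C}_{\mathcal{L}} = \mathbf{N}_{\mathcal{L}}/\mathbf{I}$ satisfy
\[
\on{dim}\on{Hom}_{\mathbf{C}_{\mathcal{L}}}(\mathrm{F}_{s},\mathrm{F}_{t}) \leq \on{dim}\on{Hom}_{\mathbf{N}_{\mathcal{L}}/\on{Ker}(\sigma)}(\mathrm{F}_{s},\mathrm{F}_{t}) \leq \on{dim}\on{Hom}_{Q\!\on{-proj}}(Qf_{s},Qf_{t}),
\]
and hypothesis (1) makes the outer terms equal, forcing $\on{Ker}\sigma = \mathbf{I}$ and making $\widetilde{\sigma}$ an injective map between equidimensional Hom-spaces, hence an isomorphism; fullness is then automatic. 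So the two missing ingredients in your proposal --- evaluating at the simple top in the abelianization, and using the containment $\on{Ker}\sigma \subseteq \mathbf{I}$ to convert the Cartan hypothesis into a two-sided squeeze --- are precisely the content of the paper's argument.
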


\begin{proof}
 As described in \cite[Section~5.2]{MM5}, the embedding $\mathbf{M}(\mathtt{i}) \rightarrow \overline{\mathbf{M}}(\mathtt{i})$, sending $X$ to the projective object $0 \rightarrow X$, gives an equivalence $\mathbf{M}(\mathtt{i}) \simeq \overline{\mathbf{M}}(\mathtt{i})\!\on{-proj}$.
 From the latter of our assumptions, we conclude that for any $1$-morphism $\mathrm{F}$ of $\csym{C}$ and any $X \in \on{Ob}\overline{\mathbf{M}}(\mathtt{i})$, the object $\mathbf{M}\mathrm{F}X$ is projective. 
 
 In particular, $\overline{\mathbf{M}}(\mathtt{i})\!\on{-proj}$ becomes a finitary $2$-representation of $\csym{C}$, equivalent to $\mathbf{M}$, which we denote by $\overline{\mathbf{M}}\!\on{-proj}$. 
 Given $k \in \rr{n}$, we let $L_{k}$ be the simple top of the object $0 \rightarrow X_{k}$ of $\overline{\mathbf{M}}(\mathtt{i})$. 
 
 The Yoneda lemma for $\mathbf{P}_{\mathtt{i}}$ given in \cite[Lemma~9]{MM2} gives a unique $2$-transformation from $\mathbf{P}_{\mathtt{i}}$ to $\overline{\mathbf{M}}\!\on{-proj}$, induced by the assignment $\mathbb{1}_{\mathtt{i}} \mapsto L_{l}$. This $2$-transformation sends $\mathrm{F}$ to $\overline{\mathbf{M}}\mathrm{F} L_{l}$.
 We may restrict this to a $2$-transformation from $\mathbf{K}_{\mathcal{L}}$.
 Since $\mathcal{J}$ is idempotent, \cite[Corollary~19]{KM} implies that the left cells it contains are pairwise $L$-incomparable, and so $\mathrm{F} >_{L} \mathcal{L}$ implies $\mathrm{F} >_{J} \mathcal{J}$. Since $\mathcal{J}$ is the apex of $\mathbf{M}$, a $1$-morphism $\mathrm{F}$ satisfying $\mathrm{F} >_{L} \mathcal{L}$ is sent to zero by the above described $2$-transformation. 
 Thus, the above $2$-transformation from $\mathbf{K}_{\mathcal{L}}$ factors through an induced $2$-transformation $\sigma: \mathbf{N}_{\mathcal{L}} \rightarrow \overline{\mathbf{M}}\!\on{-proj}$, from the transitive quotient $\mathbf{N}_{\mathcal{L}}$ of $\mathbf{K}_{\mathcal{L}}$. 
 
 The image of $\mathrm{F}_{k}$ under 
 $\sigma$ is 
 \[
 \overline{\mathbf{M}}\mathrm{F}_{k}L_{l} \simeq Qf_{k} \otimes_{\Bbbk} f_{l}Q \otimes_{Q} L_{l} \simeq Qf_{k},
 \]
 which is indecomposable. This shows that all the isomorphism classes of indecomposable objects of $Q\!\on{-proj}$ are in the essential image of $\sigma$, and so $\sigma$ is essentially surjective.
 
 The kernel of $\sigma$ is an ideal of $\mathbf{N}_{\mathcal{L}}$, which does not contain any identity $2$-morphisms of $\csym{D}$, since $\mathrm{F}_{k}L_{l} \neq 0$, for all $k$. Thus it is contained in the maximal ideal $\mathbf{I}$ of $\mathbf{N}_{\mathcal{L}}$, which defines the cell $2$-representation $\mathbf{C}_{\mathcal{L}}$, via $\mathbf{C}_{\mathcal{L}} = \mathbf{N}_{\mathcal{L}}/\mathbf{I}$. 
 
 We claim that $\on{Ker}\sigma = \mathbf{I}$. It suffices to show that $\mathbf{I} \subseteq \on{Ker}\sigma$. The $2$-transformation
 \[ \widetilde{\sigma}: \mathbf{N}_{\mathcal{L}}/\on{Ker}(\sigma) \rightarrow \overline{\mathbf{M}}\!\on{-proj} \]
 is, by the definition of $\on{Ker}\sigma$, given by a faithful functor, so that, for all $s,t \in \rr{n}$, the linear map
 \[ \widetilde{\sigma}_{st}: \on{Hom}_{\mathbf{N}_{\mathcal{L}}/\on{Ker}(\sigma)}(F_{s}, F_{t}) \rightarrow \on{Hom}_{Q\!\on{-proj}}(Qe_{s}, Qe_{t}) \]
 is injective. Hence
 \[
 \on{dim}\on{Hom}_{\mathbf{N}_{\mathcal{L}}/\on{Ker}(\sigma)}(\mathrm{F}_{s}, \mathrm{F}_{t}) \leq \on{dim}\on{Hom}_{Q\!\on{-proj}}(Qe_{s}, Qe_{t}).
 \]
 Since $\on{Ker}(\sigma) \subseteq \mathbf{I}$, we have
 \[ \on{dim}\on{Hom}_{\mathbf{C}_{\mathcal{L}}}(\mathrm{F}_{s},\mathrm{F}_{t}) \leq \on{dim}\on{Hom}_{\mathbf{N}_{\mathcal{L}}/\on{Ker}(\sigma)}(\mathrm{F}_{s}, \mathrm{F}_{t}). \]
 The equality of Cartan matrices for $\mathbf{C}_{\mathcal{L}}$ and $\mathbf{M}$ implies that the lower and the upper bounds for $\on{dim}\on{Hom}_{\mathbf{N}_{\mathcal{L}}/\on{Ker}(\sigma)}(\mathrm{F}_{s}, \mathrm{F}_{t})$ coincide, so 
 \[
\on{dim}\mathbf{I}(\mathrm{F}_{s},\mathrm{F}_{t}) = \on{dim}\on{Ker}(\sigma)(\mathrm{F}_{s},\mathrm{F}_{t}), \text{ and } \on{Ker}(\sigma) = \mathbf{I}. 
 \]
We thus have a $2$-transformation
 $
  \widetilde{\sigma}: \mathbf{C}_{\mathcal{L}} \rightarrow \overline{\mathbf{M}}\!\on{-proj}
 $
 such that $\widetilde{\sigma}_{st}$ is an injective linear map between equidimensional spaces, and thus is an isomorphism, for all $s,t$. This shows that $\widetilde{\sigma}$ is essentially surjective, full and faithful, and so it gives an equivalence of $2$-representations. Thus, $\mathbf{C}_{\mathcal{L}}$ is equivalent to $\overline{\mathbf{M}}\!\on{-proj}$, and hence also to $\mathbf{M}$.
\end{proof}

 \subsection{Decategorification and action matrices}
 Following \cite[Section~2.4]{MM2}, we define the {\it decategorification of $\csym{C}$} as the preadditive category $[\csym{C}]$ given by
 \begin{itemize}
  \item $\on{Ob}[\csym{C}] := \on{Ob}\csym{C}$;
  \item Given $\mathtt{i,j} \in \on{Ob}\csym{C}$, we let $[\csym{C}](\mathtt{i,j}) := [\csym{C}(\mathtt{i,j})]_{\oplus}$, the split Grothendieck group of $\csym{C}(\mathtt{i,j})$, with composition induced by composition in $\csym{C}$.
 \end{itemize}
 Similarly, for a finitary $2$-representation $\mathbf{M}$, its decategorification is a $\mathbb{Z}$-bilinear functor from $[\csym{C}]$ to $\mathbf{Ab}$. 
 
 Given $\mathtt{i,j} \in \on{Ob}\csym{C}$, choose complete, irredundant sets of representatives of isoclasses of indecomposable objects of the categories $\mathbf{M}(\mathtt{i})$ and $\mathbf{M}(\mathtt{j})$ and denote them by $\setj{X_{1},\ldots, X_{n}}$ and $\setj{ Y_{1},\ldots, Y_{m}}$, respectively. Let $\mathrm{F} \in \on{Ob}\csym{C}(\mathtt{i,j})$.
 With respect to the induced bases $\mathtt{X,Y}$ in the respective split Grothendieck groups, the {\it action matrix} $[\mathrm{F}]^{\mathtt{X,Y}}$ is the $m \times n$ matrix such that the entry $[\mathrm{F}]_{kl}^{\mathtt{X},\mathtt{Y}}$ is the multiplicity of $Y_{k}$ as a direct summand of $\mathbf{M}\mathrm{F}X_{l}$.
 Under a fixed choice of bases, we will denote $[\mathrm{F}]^{\mathtt{X,Y}}$ simply by $[\mathrm{F}]$. If there is possible ambiguity regarding the $2$-representation with respect to which we denote the action matrix, we will add it as a subscript in our notation, for instance $[\mathrm{F}]_{\mathbf{M}}$ in the case above.
 
\subsection{\texorpdfstring{ Discrete extensions of $2$-representations}{Discrete extensions of 2-representations}}

The notion of a discrete extension of $2$-representations was introduced and studied in \cite{CM}.
We only give a brief summary of the notions we will use in this text and refer to \cite{CM} for the details.

A {\it short exact sequence}
\begin{equation}\label{SES}
 0 \rightarrow \mathbf{K} \rightarrow \mathbf{M} \rightarrow \mathbf{N} \rightarrow 0
\end{equation}
of $2$-representations 
of $\csym{C}$ 
consists of
\begin{itemize}
 \item a finitary $2$-representation $\mathbf{M}$;
 \item a finitary $2$-subrepresentation $\mathbf{K}$ of $\mathbf{M}$;
 \item the $2$-representation $\mathbf{N}$ given by the quotients of the categories $\mathbf{M}(\mathtt{i})$ by the ideals generated by the identity morphisms of all objects of $\mathbf{K}(\mathtt{i})$;
 \item the inclusion $2$-transformation $\mathbf{K} \rightarrow \mathbf{M}$;
 \item the projection $2$-transformation $\mathbf{M} \rightarrow \mathbf{N}$.
\end{itemize}

Given $\mathtt{i} \in \on{Ob}\csym{C}$, a short exact sequence such as the one above induces a partition of the set of indecomposable objects of $\mathbf{M}(\mathtt{i})$ into two subsets, one consisting of the objects of $\mathbf{K}(\mathtt{i})$ and the other consisting of the indecomposable objects not sent to zero under the projection $\mathbf{M} \rightarrow \mathbf{N}$. Let $\setj{Y_{1},\ldots, Y_{s}}$ be the former set of indecomposable objects and let $\setj{X_{1},\ldots, X_{r}}$ be the latter. 

We say that the short exact sequence \eqref{SES} is {\it trivial} if, for any $1$-morphism $\mathrm{F}$ of $\csym{C}$, there are no $l \in \rr{s}, k \in \rr{r}$, such that $Y_{l}$ is a direct summand of $\mathbf{M}\mathrm{F}X_{k}$.

Let $\mathbf{K,N}$ be transitive, finitary $2$-representations of $\csym{C}$. If every short exact sequence
\[
 0 \rightarrow \mathbf{K'} \rightarrow \mathbf{M} \rightarrow \mathbf{N'} \rightarrow 0
\]
 with $\mathbf{K} \simeq \mathbf{K'}$ and $\mathbf{N} \simeq \mathbf{N'}$ is trivial, we have
 \[
  \on{Dext}(\mathbf{N},\mathbf{K}) = \varnothing.
 \]
 We will not use the general definition of $\on{Dext}(\mathbf{N},\mathbf{K})$, and refer the interested reader to \cite{CM}.

\subsection{The \texorpdfstring{$2$-category $\csym{C}_{\!A}$}{2-category C(A)}}\label{CAFacts}

 Let $A$ be a finite dimensional, basic, connected algebra. Fix a small category $\mathcal{A}$ equivalent to $A\!\on{-mod}$. The $2$-category $\csym{C}_{\!A}$ consists of
 \begin{itemize}
  \item a single object $\mathtt{i}$;
  \item endofunctors of $\mathcal{A}$ isomorphic to tensoring with $A$-$A$-bimodules in the category $\on{add}\setj{A \otimes_{\Bbbk} A, A}$ as $1$-morphisms (in other words, so-called projective functors of $\mathcal{A}$); 
  \item all natural transformations between such functors as $2$-morphisms.
 \end{itemize}

 In particular, $\csym{C}_{\!A}$ is finitary. 
 Fix a \idem\ $\setj{e_{1}, \ldots, e_{m}}$ and an equivalence $A\!\on{-mod}\xiso \mathcal{A}$. Under these choices, the isomorphism classes of indecomposable $1$-morphisms correspond bijectively to the set consisting of the regular bimodule ${}_{A}A_{A}$ and the projective bimodules $\setj{Ae_{i} \otimes_{\Bbbk} e_{j}A \; | \; i,j \in \rr{m}}$. 
 We then further choose a unique representative of every isomorphism class corresponding to a bimodule of the form $Ae_{i} \otimes e_{j}A$, and denote this representative by $\mathrm{F}_{ij}$. On the level of isomorphism classes of $1$-morphisms, the composition $\mathrm{F}_{ij} \circ \mathrm{F}_{kl}$ corresponds to the tensor product
\[
 (Ae_{i} \otimes_{\Bbbk} e_{j}A) \otimes_{A}(Ae_{k} \otimes_{\Bbbk} e_{l}A) \simeq (Ae_{i} \otimes_{\Bbbk} e_{l}A)^{\oplus \dim e_{j}Ae_{k}},
\]
 and so we may write
\begin{equation}\label{CAComposition}
 \mathrm{F}_{ij} \circ \mathrm{F}_{kl} \simeq \mathrm{F}_{il}^{\oplus \dim e_{j}Ae_{k}}.
\end{equation}
Using our notation for $1$-morphisms, given a subset $W \subseteq \rr{m}\times \rr{m}$, we denote the set $\setj{[\mathrm{F}_{ij}] \; | \; (i,j) \in W}$ by $\mathcal{S}(W)$. 
A $1$-morphism $\mathrm{F}$ belongs to $\mathcal{S}(\rr{m}\times \rr{m})$ if and only if $\mathrm{F}$ is an indecomposable $1$-morphism not isomorphic to the identity $1$-morphism.

Further, to $W$ as above we associate the multiplicity-free $1$-morphism $\bigoplus_{(i,j) \in W} \mathrm{F}_{ij}$, which we denote by $\mathrm{F}_{W}$. In particular, $\mathrm{F}_{ij} = \mathrm{F}_{\setj{(i,j)}}$.

Unless otherwise stated, we implicitly assume a fixed \idem\ $\setj{e_{1},\ldots, e_{m}}$ for $A$, with respect to which we use the above introduced notation.

A $2$-category of the form $\csym{C}_{\!A}$ is weakly fiat if and only if $A$ is self-injective. This is an immediate consequence of the following lemma:
 \begin{lemma}\label{AdjSelf}\cite[Lemma~45]{MM1}
  Let $f,e$ be primitive, mutually orthogonal idempotents of $A$. Then 
  \[
  \big( \left(Ae \otimes_{\Bbbk} fA\right) \otimes_{A} -, \left( (fA)^{*} \otimes_{\Bbbk} eA \right) \otimes_{A} - \big) 
  \]
  is an adjoint pair of endofunctors of $A\!\on{-mod}$.
 \end{lemma}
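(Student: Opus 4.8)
The plan is to realize the claimed right adjoint via the tensor--hom adjunction and then identify the resulting Hom-functor with the prescribed tensor functor through a chain of elementary natural isomorphisms. Write $M := Ae \otimes_{\Bbbk} fA$, regarded as an $A$-$A$-bimodule with the left action on the first factor and the right action on the second. The general tensor--hom adjunction gives that the endofunctor $M \otimes_{A} -$ of $A\!\on{-mod}$ admits $\Hom_{A}(M,-)$ as a right adjoint, where $M$ is viewed as a left $A$-module for the purpose of forming homomorphisms and the residual left $A$-module structure on $\Hom_{A}(M,N)$ comes from the right $A$-action on $M$. It therefore suffices to produce a natural isomorphism $\Hom_{A}(Ae \otimes_{\Bbbk} fA, -) \cong \big((fA)^{*} \otimes_{\Bbbk} eA\big) \otimes_{A} -$ of endofunctors of $A\!\on{-mod}$.

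First I would compute both sides pointwise. For the right-hand side, the standard isomorphism $eA \otimes_{A} N \cong eN$ yields
\[ \big((fA)^{*} \otimes_{\Bbbk} eA\big) \otimes_{A} N \cong (fA)^{*} \otimes_{\Bbbk} eN, \]
naturally in $N$. For the left-hand side, since $Ae \otimes_{\Bbbk} fA$ is, as a left $A$-module, a finite direct sum of copies of $Ae$ indexed by a basis of $fA$, I would use the hom--tensor interchange $\Hom_{A}(Ae \otimes_{\Bbbk} fA, N) \cong (fA)^{*} \otimes_{\Bbbk} \Hom_{A}(Ae, N)$ together with the evaluation isomorphism $\Hom_{A}(Ae, N) \cong eN$ to obtain
\[ \Hom_{A}(Ae \otimes_{\Bbbk} fA, N) \cong (fA)^{*} \otimes_{\Bbbk} eN, \]
again naturally in $N$. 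Composing these two identifications produces the desired isomorphism on objects.

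The step I expect to require the most care is checking that these isomorphisms respect the left $A$-module structures, so that the identification is genuinely one of endofunctors of $A\!\on{-mod}$ and not merely of the underlying vector spaces. On the left-hand side the left action on $\Hom_{A}(Ae \otimes_{\Bbbk} fA, N)$ is induced by the right $A$-action on $fA$; on the right-hand side it is the left $A$-action on the dual module $(fA)^{*}$. These two actions correspond precisely under the duality $(-)^{*} = \Hom_{\Bbbk}(-,\Bbbk)$, which turns the right $A$-module $fA$ into the left $A$-module $(fA)^{*}$, so the comparison is compatible by construction. Finally, I would confirm naturality in $N$ of the composite, which follows since each constituent isomorphism (the evaluation $\Hom_{A}(Ae,-) \cong e(-)$, the projection $eA \otimes_{A} - \cong e(-)$, and the hom--tensor interchange over the finite-dimensional space $fA$) is natural; uniqueness of adjoints then gives the adjoint pair as stated. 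I would remark that primitivity and orthogonality of $e$ and $f$ play no role in the adjunction itself and are retained only because they are the hypotheses relevant to the ambient $2$-category $\csym{C}_{\!A}$.
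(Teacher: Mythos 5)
Your proof is correct, but it takes a different route from the paper, whose entire proof is a citation: the paper observes that the proof of \cite[Lemma~45]{MM1} goes through verbatim once one notes that the weak symmetry hypothesis there is used only to show that $(fA)^{*}$ is projective --- i.e.\ only to ensure that the right adjoint is again a $1$-morphism of $\csym{C}_{\!A}$ --- and not to establish the adjunction itself. You instead give a self-contained argument: the tensor--hom adjunction realizes the right adjoint of $(Ae \otimes_{\Bbbk} fA) \otimes_{A} -$ as $\Hom_{A}(Ae \otimes_{\Bbbk} fA, -)$, and the chain
\[
\Hom_{A}(Ae \otimes_{\Bbbk} fA, N) \cong (fA)^{*} \otimes_{\Bbbk} \Hom_{A}(Ae, N) \cong (fA)^{*} \otimes_{\Bbbk} eN \cong \big((fA)^{*} \otimes_{\Bbbk} eA\big) \otimes_{A} N,
\]
checked to be left $A$-linear and natural in $N$, identifies this with the claimed functor. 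All steps are valid: $fA$ is finite dimensional, so the hom--tensor interchange applies, and your verification that the left $A$-action on the Hom-space (induced by the right $A$-action on $fA$) corresponds under duality to the action on $(fA)^{*}$ is precisely the point where the bimodule structures must be tracked, which you do correctly. In effect you have reconstructed, with the weak-symmetry hypothesis stripped out from the start, the standard argument underlying \cite[Lemma~45]{MM1}; what the paper's citation-style proof buys is brevity and an explicit pointer to where self-injectivity genuinely enters (projectivity of $(fA)^{*}$, which is exactly what the notion of a self-injective core later restores), while yours buys self-containedness and makes transparent that no hypothesis on $A$ beyond finite dimensionality is needed. Your closing remark that primitivity and orthogonality of $e$ and $f$ play no role in the adjunction is likewise consistent with the paper's reading of \cite{MM1}.
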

 
 \begin{proof}
  This is an immediate consequence of the proof of \cite[Lemma~45]{MM1}, since the assumption about $A$ being weakly symmetric is used there only to show that $(fA)^{*}$ is projective.
 \end{proof}

 \subsubsection{Cell structure and simple transitive \texorpdfstring{$2$-representations of $\csym{C}_{\!A}$}{2-representations of C(A)}}
 Using the composition rule given in \eqref{CAComposition}, one may determine the cell structure of $\csym{C}_{A}$. The following is its representation using a so-called eggbox diagram, commonly used in the theory of semigroups:
\begin{gather*}
 \begin{array}{|c|}
  \hline 
  \mathbb{1}_{\mathtt{i}} \\
  \hline
 \end{array} \\
\begin{array}{|c|c|c|c|}
\hline
\mathrm{F}_{11}&\mathrm{F}_{12}&\cdots &F_{1m}\\
\hline
\mathrm{F}_{21}&\mathrm{F}_{22}&\cdots&F_{2m}\\
\hline
\vdots&\vdots&\ddots &\vdots\\
\hline
\mathrm{F}_{m1}&\mathrm{F}_{m2}&\cdots &\mathrm{F}_{mm}\\
\hline
\end{array}
\end{gather*}
In the above diagram, the rectangular arrays are the $J$-cells of $\csym{C}_{A}$, the rows within the arrays right cells and the columns give left cells.
The diagram thus illustrates that $\csym{C}_{\!A}$ has two $J$-cells $\mathcal{J}_{0},\mathcal{J}_{1}$, satisfying $\mathcal{J}_{1} >_{J} \mathcal{J}_{0}$, where $\mathcal{J}_{1}$ is partitioned into $m$ left cells and $m$ right cells, with each intersection of a left cell with a right cell given by a singleton. By construction we then have $m+1$ cell $2$-representations: the apex of a cell $2$-representation $\mathbf{C}_{\mathcal{L}}$ here is the $J$-cell containing $\mathcal{L}$.
 
 The simple transitive $2$-representations of $\csym{C}_{\!A}$ have been completely classified:
\begin{theorem}\cite[Theorem~9]{MMZ}
\begin{itemize}
 \item Every simple transitive $2$-representation of $\csym{C}_{\!A}$ is equivalent to a cell $2$-representation. 
 \item All cell $2$-representations associated to left cells contained in $\mathcal{J}_{1}$ are mutually equivalent.
 \item Thus, up to equivalence, the $2$-category $\csym{C}_{A}$ admits exactly two simple transitive $2$-representations.
\end{itemize}
\end{theorem}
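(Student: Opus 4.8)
The plan is to organise everything around the apex of a simple transitive $2$-representation $\mathbf{M}$. Being transitive, $\mathbf{M}$ has a well-defined apex, which is forced to be idempotent and hence is one of the two $J$-cells $\mathcal{J}_0 = \setj{\mathbb{1}_{\mathtt{i}}}$ and $\mathcal{J}_1$ (both are idempotent, the latter because $\mathrm{F}_{ii} \circ \mathrm{F}_{ii} \simeq \mathrm{F}_{ii}^{\oplus \dim e_i A e_i}$). If the apex is $\mathcal{J}_0$, then every $\mathrm{F}_{ij}$ acts as zero, and transitivity together with the indecomposability of $\mathbb{1}_{\mathtt{i}}$ forces $\mathbf{M}$ to have rank one, giving the trivial cell $2$-representation $\mathbf{C}_{\mathcal{J}_0}$. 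The entire content thus concerns the case of apex $\mathcal{J}_1$, where the goal is to identify $\mathbf{M}$ with the cell $2$-representation of a left cell of $\mathcal{J}_1$.

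So fix $\mathbf{M}$ with apex $\mathcal{J}_1$, write $\mathbf{M}(\mathtt{i}) \simeq Q\!\on{-proj}$ with indecomposables $X_1,\ldots,X_n$, and record the action matrices $M_{ij} := [\mathrm{F}_{ij}]$. The composition rule \eqref{CAComposition} decategorifies to
\[
 M_{ij}M_{kl} = (\dim e_j A e_k)\, M_{il},
\]
which rigidly constrains these matrices. Using $\mathrm{F}_{ij} \circ \mathrm{F}_{j1} \simeq \mathrm{F}_{i1}^{\oplus \dim e_j A e_j}$ and $\mathrm{F}_{i1} \circ \mathrm{F}_{1j} \simeq \mathrm{F}_{ij}^{\oplus \dim e_1 A e_1}$ (both exponents positive), one sees that all $\mathbf{M}\mathrm{F}_{ij}$ with $i$ fixed have image with the same additive closure $\mathcal{I}_i$, while $\mathrm{F}_{ii} \circ \mathrm{F}_{ii} \simeq \mathrm{F}_{ii}^{\oplus \dim e_i A e_i}$ makes a rescaling of $M_{ii}$ idempotent over $\mathbb{Q}$. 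The first real step is to upgrade this to: each $\mathcal{I}_i$ is the additive closure of a single indecomposable $P_i$, the $P_i$ are pairwise non-isomorphic, and they exhaust the indecomposables of $\mathbf{M}(\mathtt{i})$. This is where a Perron--Frobenius type positivity argument, fed by transitivity and the non-negativity of the $M_{ij}$, enters; it yields $n = m$ and, after setting $X_i := P_i$ and choosing the distinguished index $l$ matching the chosen column, the standard form $\mathbf{M}\mathrm{F}_{i1} \cong Q f_i \otimes_{\Bbbk} f_l Q \otimes_Q -$.

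It then remains to check the hypotheses of Lemma \ref{StdArg} for the left cell $\mathcal{L}_1 = \setj{\mathrm{F}_{11},\ldots,\mathrm{F}_{m1}}$: the action is already in standard form, so the crux is the comparison of Cartan matrices, namely computing $\dim \Hom_{\mathbf{M}(\mathtt{i})}(X_i,X_j)$ and matching it with the cell $2$-representation. I expect this to be the main obstacle and the point where one needs genuinely more than the self-injective input of \cite{MM5}: the right adjoint of $\mathrm{F}_{ij}$ furnished by Lemma \ref{AdjSelf} involves $(e_j A)^{\ast}$, which is not a $1$-morphism of $\csym{C}_A$, so $\csym{C}_A$ fails to be weakly fiat and this adjunction can only be exploited externally. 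One computes the $\Hom$-dimensions through this external adjunction together with the explicit action, shows the resulting Cartan matrix agrees with that of the cell $2$-representation, and then invokes Lemma \ref{StdArg} to conclude $\mathbf{M} \simeq \mathbf{C}_{\mathcal{L}_1}$, which gives the first bullet.

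For the second bullet, the cleanest route is to run the argument above with $\mathbf{M}$ taken to be the defining $2$-representation $\mathbf{D}$ of $\csym{C}_A$ on $A\!\on{-proj}$, on which $\mathrm{F}_{ij}$ acts as $A e_i \otimes_{\Bbbk} e_j A \otimes_A -$. Since this action is in the standard form required by Lemma \ref{StdArg} simultaneously for every column $j$, the lemma yields $\mathbf{D} \simeq \mathbf{C}_{\mathcal{L}_j}$ for all $j \in \rr{m}$; hence the $m$ cell $2$-representations attached to left cells of $\mathcal{J}_1$ are all equivalent, reflecting the fact that $\mathcal{J}_1$ is strongly regular and that changing the column merely relabels the projectives $Ae_i$. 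Combining the single equivalence class obtained this way with the trivial representation $\mathbf{C}_{\mathcal{J}_0}$, and using the first bullet to exclude any further possibilities, leaves exactly two equivalence classes of simple transitive $2$-representations, which is the third bullet.
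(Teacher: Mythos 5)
First, note that the paper does not prove this statement at all: it is quoted verbatim from \cite[Theorem~9]{MMZ}, whose actual proof proceeds via the ``pyramid'' construction, realizing projective bimodules over an arbitrary $A$ inside a setting governed by a related (weakly) self-injective algebra, thereby reducing to the self-injective case of \cite{MM5}. Your outer skeleton is fine and matches the standard template: the apex dichotomy and the rank-one analysis for apex $\mathcal{J}_{0}$ are correct, and your treatment of the second bullet is legitimate --- applying Lemma \ref{StdArg} to the defining $2$-representation $\mathbf{D}$ on $A\!\on{-proj}$ with each column $j$ does yield $\mathbf{D} \simeq \mathbf{C}_{\mathcal{L}_{j}}$ for every $j$ (the Cartan comparison reduces to $(e_{i}Ae_{k}\otimes_{\Bbbk} e_{j}Ae_{j})/(e_{i}Ae_{k}\otimes_{\Bbbk} e_{j}(\on{Rad}A)e_{j}) \simeq e_{i}Ae_{k}$, exactly the computation behind \cite[Proposition~9]{MM5}, which is what this bullet amounts to).

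The genuine gap is the entire middle of the first bullet in the apex-$\mathcal{J}_{1}$ case: you assert, but do not prove, that $n=m$ and that $\mathbf{M}\mathrm{F}_{i1} \cong Qf_{i}\otimes_{\Bbbk} f_{l}Q \otimes_{Q} -$, and the tools you name cannot deliver this. The decategorified relations $M_{ij}M_{kl} = (\dim e_{j}Ae_{k})M_{il}$ plus Perron--Frobenius positivity constrain the action matrices but do not force the functors $\overline{\mathbf{M}}\mathrm{F}_{ij}$ to be \emph{projective} functors, which is the actual crux; establishing projectivity of the action is where \cite{MM5} uses weak fiatness of $\csym{C}_{A}$ for self-injective $A$ (exactness of $\overline{\mathbf{M}}\mathrm{F}$ and internal adjunctions), and where \cite{MMZ} had to invent pyramids for general $A$. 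Your proposed workaround --- exploiting the adjunction of Lemma \ref{AdjSelf} ``externally'' to compute $\on{Hom}$-dimensions --- is incoherent for an abstract simple transitive $\mathbf{M}$: the bimodule $(e_{j}A)^{\ast}\otimes_{\Bbbk} e_{i}A$ is not a $1$-morphism of $\csym{C}_{A}$, hence has no image under $\mathbf{M}$, so there is simply no candidate adjoint functor acting on $\mathbf{M}(\mathtt{i})$; the adjunction of Lemma \ref{AdjSelf} lives only on $A\!\on{-mod}$, i.e.\ on the defining representation. (This is precisely the obstruction the present paper circumvents by requiring $U$ to be a self-injective core, so that the needed adjoints \emph{are} $1$-morphisms of $\csym{D}_{U\times U}$.) If a positivity argument at the level of action matrices sufficed, the pyramid machinery of \cite{MMZ} would have been unnecessary; as it stands, your sketch proves the theorem only for self-injective $A$, where it essentially reproduces \cite{MM5}.
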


The main aim of the next two sections is to obtain a similar result for certain $2$-subcategories of $\csym{C}_{\!A}$.

\section{\texorpdfstring{Combinatorial $2$-subcategories of $\csym{C}_{\!A}$}{Combinatorial 2-subcategories of C(A)}}\label{s3}
 A {\it multisemigroup} is a set $\mathtt{S}$ together with a function $\mu$ from $\mathtt{S} \times \mathtt{S}$ to the power set $2^{\mathtt{S}}$, satisfying the {\it associativity condition}
 \[
  \bigcup_{x \in \mu(r,s)} \mu(x,t) = \bigcup_{y \in \mu(s,t)} \mu(r,y).
 \]
 A {\it multisubsemigroup} of $S$ is a subset $R$ of $S$ such that, for any $x,y \in R$, we have $\mu(x,y) \subseteq R$. The restriction of $\mu$ to $R\times R$ endows $R$ with the structure of a multisemigroup.

 Let $\csym{C}$ be a finitary $2$-category. Denote by $\mathcal{S}(\csym{C})^{0}$ the set $\mathcal{S}(\csym{C}) \sqcup \setj{0}$.
 Recall from \cite[3.3]{MM2} that the {\it multisemigroup of $\csym{C}$} is defined as the set $\mathcal{S}(\csym{C})^{0}$ together with the function $\mu$ defined by
 \[
  \mu([\mathrm{F}],[\mathrm{G}]) =
  \begin{cases}
   \setj{0} \text{ if } \mathrm{F} \circ \mathrm{G} \text{ is undefined;} \\
   \setj{0} \text{ if } \mathrm{F} \circ \mathrm{G} = 0 \\
   \setj{[\mathrm{H}] \in \mathcal{S}(\csym{C}) \; | \; \mathrm{H} \text{ is a direct summand of } \mathrm{F}\circ \mathrm{G}} \text{ otherwise.}
  \end{cases}
 \]
 together with $\mu([\mathrm{F}],0) = \mu(0,[\mathrm{F}]) = \mu(0,0) = \setj{0}$.

 A subcategory $\mathcal{D}$ of a category $\mathcal{C}$ is called {\it replete} if, given $X \in \on{Ob}\mathcal{D}$ and an isomorphism $f:X \xrightarrow{\simeq} Y$ in $\mathcal{C}$, the object $Y$ lies in $\on{Ob}\mathcal{D}$, and $f$ is a morphism in $\mathcal{D}$.  Further, $\mathcal{D}$ is called {\it wide} if $\on{Ob}\mathcal{D} = \on{Ob}\mathcal{C}$. 

 If a $2$-subcategory $\csym{D}$ of a $2$-category $\csym{C}$ is such that, for all $\mathtt{i,j} \in \on{Ob}\csym{D}$, the subcategory $\csym{D}(\mathtt{i,j})$ of $\csym{C}(\mathtt{i,j})$ is replete, we say that $\csym{D}$ is {\it $2$-replete.}
 Similarly, we say that a $2$-subcategory $\csym{D}$ of a $2$-category $\csym{C}$ is {\it wide} if $\on{Ob}\csym{D} = \on{Ob}\csym{C}$.

In the remainder of this text we will only consider finitary, wide, $2$-replete, $2$-full $2$-subcategories of finitary $2$-categories. Hence we introduce the following terminology:
\begin{definition}
 Let $\csym{C}$ be a finitary $2$-category. We say that a $2$-subcategory $\csym{D}$ of $\csym{C}$ is {\it combinatorial} if it is finitary, wide, $2$-replete and $2$-full.
\end{definition}

\begin{proposition}\label{CombinatorialSubcategories}
 The map
 \begin{align*}
  \setj{
  \begin{aligned}
   &\text{Combinatorial } \\
   &2\text{-subcategories of } \csym{C} 
  \end{aligned}
  }
  &\rightarrow
  \setj{
  \begin{aligned}
  &\text{Multisubsemigroups of }\mathcal{S}(\csym{C})^{0} \\
  &\text{ containing }0 \text{ and } [\mathbb{1}_{\mathtt{i}}] \text{ for every } \mathtt{i} \in \on{Ob}\csym{C}
  \end{aligned}
  }\\
  \csym{D} &\mapsto \mathcal{S}(\csym{D})^{0} 
\end{align*}
is a bijection.
\end{proposition}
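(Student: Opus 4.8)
The plan is to establish the asserted bijection by constructing an explicit two-sided inverse, exploiting the fact that a combinatorial $2$-subcategory is completely determined by which indecomposable $1$-morphisms it contains. Since a combinatorial $2$-subcategory $\csym{D}$ is finitary, wide, $2$-replete and $2$-full, the only data distinguishing it from $\csym{C}$ is the set of isomorphism classes of its indecomposable $1$-morphisms, which is precisely $\mathcal{S}(\csym{D}) \subseteq \mathcal{S}(\csym{C})$. Repleteness and $2$-fullness guarantee that $\csym{D}(\mathtt{i,j})$ is recovered as the full additive idempotent-split closure inside $\csym{C}(\mathtt{i,j})$ of these indecomposables together with all $2$-morphisms between them; wideness fixes the objects. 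Thus the map $\csym{D} \mapsto \mathcal{S}(\csym{D})^{0}$ loses no information, and injectivity will follow once I argue that $\csym{D}$ can be reconstructed from $\mathcal{S}(\csym{D})^{0}$.

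\textbf{Well-definedness.} First I would verify that the map lands in the claimed codomain. Given a combinatorial $\csym{D}$, the set $\mathcal{S}(\csym{D})^{0}$ visibly contains $0$ and, since $\csym{D}$ is wide and contains each identity $1$-morphism $\mathbb{1}_{\mathtt{i}}$ (these are indecomposable in the finitary setting), it contains $[\mathbb{1}_{\mathtt{i}}]$ for every $\mathtt{i}$. To see it is a multisubsemigroup, take $[\mathrm{F}],[\mathrm{G}] \in \mathcal{S}(\csym{D})$. Because $\csym{D}$ is closed under horizontal composition, $\mathrm{F} \circ \mathrm{G}$ lies in $\csym{D}(\mathtt{i,j})$ whenever it is defined; since $\csym{D}$ is replete and additive, every indecomposable direct summand of $\mathrm{F} \circ \mathrm{G}$ is again an object of $\csym{D}$, so $\mu([\mathrm{F}],[\mathrm{G}]) \subseteq \mathcal{S}(\csym{D})^{0}$. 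This is exactly the defining closure condition for a multisubsemigroup.

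\textbf{Inverse construction and surjectivity.} Conversely, given a multisubsemigroup $\mathtt{R} \subseteq \mathcal{S}(\csym{C})^{0}$ containing $0$ and all $[\mathbb{1}_{\mathtt{i}}]$, I would build a $2$-subcategory $\csym{D}_{\mathtt{R}}$ with $\on{Ob}\csym{D}_{\mathtt{R}} := \on{Ob}\csym{C}$ and, for each pair $\mathtt{i,j}$, letting $\csym{D}_{\mathtt{R}}(\mathtt{i,j})$ be the full, replete, additive, idempotent-split subcategory of $\csym{C}(\mathtt{i,j})$ whose indecomposable objects are those $\mathrm{F}$ with $[\mathrm{F}] \in \mathtt{R}$, together with all $2$-morphisms of $\csym{C}$ between such objects. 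The main verification is that $\csym{D}_{\mathtt{R}}$ is closed under horizontal composition: for indecomposables $\mathrm{F},\mathrm{G}$ with classes in $\mathtt{R}$, the multisubsemigroup condition $\mu([\mathrm{F}],[\mathrm{G}]) \subseteq \mathtt{R}$ says exactly that each indecomposable summand of $\mathrm{F}\circ\mathrm{G}$ has class in $\mathtt{R}$, hence lies in $\csym{D}_{\mathtt{R}}$; bilinearity of horizontal composition and additivity then extend this to all objects. The presence of $[\mathbb{1}_{\mathtt{i}}]$ supplies the required identity $1$-morphisms, so $\csym{D}_{\mathtt{R}}$ is genuinely a combinatorial $2$-subcategory. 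That the two assignments are mutually inverse is then immediate on the level of indecomposables: $\mathcal{S}(\csym{D}_{\mathtt{R}})^{0} = \mathtt{R}$ by construction, and $\csym{D}_{\mathcal{S}(\csym{D})^{0}} = \csym{D}$ because both are determined by the same indecomposables under the rigidity imposed by the combinatorial axioms.

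\textbf{The main obstacle} I anticipate is not any single deep step but rather the careful bookkeeping needed to confirm that the closure operation defining $\csym{D}_{\mathtt{R}}(\mathtt{i,j})$ interacts correctly with idempotent splitting and repleteness, so that horizontal composition of arbitrary (not merely indecomposable) $1$-morphisms stays inside $\csym{D}_{\mathtt{R}}$; this is where the translation between the multisemigroup condition, phrased purely on indecomposables, and the categorical closure conditions must be made precise. Everything else is formal once one observes that a combinatorial $2$-subcategory is, by its very definition, recoverable from its indecomposable $1$-morphisms.
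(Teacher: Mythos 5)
Your proposal is correct and follows essentially the same route as the paper: well-definedness via closure of composition plus the fact that summands of $1$-morphisms of $\csym{D}$ lie in $\csym{D}$, and bijectivity via reconstructing $\csym{D}$ (objects from wideness, $1$-morphisms as sums of indecomposables in the given classes, $2$-morphisms from $2$-fullness). One tiny point of precision: where you invoke ``replete and additive'' to get indecomposable summands inside $\csym{D}$, the correct mechanism is idempotent splitting (from finitariness), which yields a $1$-morphism isomorphic to the summand, followed by $2$-repleteness to get the summand itself --- exactly as you in fact spell out elsewhere in your construction of $\csym{D}_{\mathtt{R}}$.
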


\begin{proof}
By definition, $\mathcal{S}(\csym{D})^{0}$ contains $0$, and, since $\csym{D}$ is wide, $\mathcal{S}(\csym{D})^{0}$ necessarily contains $[\mathbb{1}_{\mathtt{i}}]$, for every $\mathtt{i} \in \on{Ob}\csym{C}$. 
If $\mathrm{F}$ is a $1$-morphism in $\csym{D}(\mathtt{i,j})$ and $G$ a summand of $\mathrm{F}$ in $\csym{C}(\mathtt{i,j})$, then $\csym{D}(\mathtt{i,j})$, being idempotent split, contains a $1$-morphism isomorphic to $\mathrm{G}$, and thus, since $\csym{D}$ is $2$-replete, it also contains $\mathrm{G}$ itself. Hence the map in the proposition is well-defined.
 
 The set $\mathcal{S}(\csym{D})^{0}$ uniquely determines the collection of $1$-morphisms of $\csym{D}$ - the latter is given by all $1$-morphisms isomorphic to finite direct sums of $1$-morphisms in the isomorphism classes of $\mathcal{S}(\csym{D})$. Further, $2$-fullness implies that the collection of $2$-morphisms in $\csym{D}$ is determined by that of $1$-morphisms in $\csym{D}$. Finally, being wide implies $\on{Ob}\csym{D} = \on{Ob}\csym{C}$. It follows that $\mathcal{S}(\csym{D})^{0}$ uniquely determines $\csym{D}$, so the map is injective.
 
 Finally, we show that the map is surjective, since given a multisubsemigroup $\mathtt{S}$ of $\mathcal{S}(\csym{C})^{0}$, the set $\on{Ob}\csym{C}$, together with the collection of all $1$-morphisms isomorphic to finite direct sums of $1$-morphisms in the classes of $\mathtt{S}$ and the collection of all $2$-morphisms of $\csym{C}$ between such $1$-morphisms, gives a wide, $2$-replete, $2$-full, finitary $2$-subcategory $\csym{D}_{\mathtt{S}}$ of $\csym{C}$. In particular, it is closed under composition of $1$-morphisms: given indecomposable $1$-morphisms $\mathrm{F},\mathrm{G}$ of $\csym{D}_{\mathtt{S}}$, all the isomorphism classes of indecomposable direct summands of $\mathrm{G} \circ \mathrm{F}$ again lie in $\mathtt{S}$ and so $\mathrm{G} \circ \mathrm{F}$ is a $1$-morphism of $\csym{D}_{\mathtt{S}}$.
\end{proof}

From the description of the $1$-morphisms of $\csym{C}_{\!A}$ in Section \ref{CAFacts}, one sees that the multisemigroup of $\csym{C}_{\!A}$ is actually a monoid with zero, with multiplication $\mu$ defined by
\[
 \mu([\mathrm{F}_{ij}],[\mathrm{F}_{kl}]) =
 \begin{cases}
 \begin{aligned}
  \setj{[\mathrm{F}_{il}]} &\text{ if } e_{j}Ae_{k} \neq 0\\
  \setj{0} \; \, \, &\text{ otherwise.}
 \end{aligned}
 \end{cases}
\]
We may relabel the element $[\mathbb{1}_{\mathtt{i}}]$ as $1$, and the elements $[\mathrm{F}_{ij}]$ of this monoid as $(i,j)$ and identify the monoid with that obtained by adjoining the unit $1$ to the resulting semigroup structure on $\left(\rr{m} \times \rr{m}\right) \cup \setj{0}$. Denote this monoid by $\mathcal{N}(A)$. Given a submonoid $U$ of $\mathcal{N}(A)$, let $\csym{D}_{U}$ denote the combinatorial $2$-subcategory of $\csym{C}_{A}$ corresponding to it under the bijection of Proposition \ref{CombinatorialSubcategories}.

 \begin{definition}\label{MainDefinition}
  Let $A$ be a finite dimensional, basic, connected algebra and choose a \idem \ $\setj{e_{1},\ldots, e_{m}}$ for $A$. A non-empty subset $U \subseteq \rr{m}$ is a {\it self-injective core for $A$} if, for any $i \in U$, there is $j \in U$ such that
  \[
   (e_{i}A)^{\ast} \simeq Ae_{j},
  \]
 \end{definition}
 
  Let $\csym{D}$ be a combinatorial subcategory of $\csym{C}_{\!A}$, where we have fixed a \idem\ $\setj{e_{1},\ldots, e_{n}}$ for $A$. Define the sets $\mathtt{N}_{L}(\csym{D})$ and $\mathtt{N}_{R}(\csym{D})$ as
  \[
   \begin{aligned}
    &\mathtt{N}_{L}(\csym{D}) := \setj{i \in \rr{m} \; | \; \text{ there is } \mathrm{F}_{kl} \in \on{Ob}\csym{D}(\mathtt{i,i}) \text{ such that } k=i}; \\
    &\mathtt{N}_{R}(\csym{D}) := \setj{i \in \rr{m} \; | \; \text{ there is } \mathrm{F}_{kl} \in \on{Ob}\csym{D}(\mathtt{i,i}) \text{ such that } l=i}.
   \end{aligned}
  \]
 \begin{lemma}\label{LRLemma}
  If a combinatorial subcategory $\csym{D}$ of $\csym{C}_{\!A}$ is weakly fiat, then 
  \[
  \mathtt{N}_{L}(\csym{D}) = \mathtt{N}_{R}(\csym{D}),
  \]
  and this set is a self-injective core for $A$.
 \end{lemma}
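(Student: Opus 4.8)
The plan is to translate the statement into the combinatorial language of the monoid $\mathcal{N}(A)$ and then exploit weak fiatness through the explicit adjoints provided by Lemma \ref{AdjSelf}. Writing $U$ for the submonoid of $\mathcal{N}(A)$ with $\csym{D} = \csym{D}_{U}$, the set $\mathtt{N}_{L}(\csym{D})$ is exactly the set of first coordinates occurring among the pairs $(i,j) \in U$, and $\mathtt{N}_{R}(\csym{D})$ is the set of their second coordinates. Thus the task is to show that a given index occurs as a first coordinate of some pair in $U$ if and only if it occurs as a second coordinate, and that the dual $(e_{i}A)^{\ast}$ is projective of the right shape for each $i$ in the common set.

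First I would record the two adjoints of a generating $1$-morphism. A direct computation with $\Hom$ and $\otimes_{A}$ (this is the content of Lemma \ref{AdjSelf}, and it also goes through on the diagonal $i = j$, where the orthogonality hypothesis is vacuous) shows that the right adjoint of $\mathrm{F}_{ij}$ is the functor of tensoring with $(e_{j}A)^{\ast} \otimes_{\Bbbk} e_{i}A$, and the left adjoint is the functor of tensoring with $Ae_{j} \otimes_{\Bbbk} (Ae_{i})^{\ast}$. The key observation is a \emph{forcing} of projectivity: any $1$-morphism of $\csym{D}$ is a projective functor, and hence sends the regular module $A$ to a projective $A$-module. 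Evaluating the right adjoint at $A$ yields $(e_{j}A)^{\ast} \otimes_{\Bbbk} e_{i}A$, a nonzero direct sum of copies of $(e_{j}A)^{\ast}$. So if weak fiatness places this adjoint inside $\csym{D}$, then $(e_{j}A)^{\ast}$ must be projective; being the $\Bbbk$-dual of an indecomposable projective it is indecomposable, and since $A$ is basic we get $(e_{j}A)^{\ast} \simeq Ae_{p}$ for some $p$, with right adjoint precisely $\mathrm{F}_{pi}$. Symmetrically, placing the left adjoint in $\csym{D}$ forces $(Ae_{i})^{\ast} \simeq e_{q}A$ for some $q$, with left adjoint $\mathrm{F}_{jq}$.

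With these two facts the three claims follow quickly. If $i \in \mathtt{N}_{L}(\csym{D})$, choose $(i,l) \in U$; the right adjoint of $\mathrm{F}_{il}$ is some $\mathrm{F}_{pi} \in \csym{D}$, so $(p,i) \in U$ and $i \in \mathtt{N}_{R}(\csym{D})$. Conversely, if $i \in \mathtt{N}_{R}(\csym{D})$, choose $(k,i) \in U$; the left adjoint of $\mathrm{F}_{ki}$ is some $\mathrm{F}_{iq} \in \csym{D}$, so $(i,q) \in U$ and $i \in \mathtt{N}_{L}(\csym{D})$. This gives $\mathtt{N}_{L}(\csym{D}) = \mathtt{N}_{R}(\csym{D}) =: N$. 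For the self-injective core property, fix $i \in N = \mathtt{N}_{R}(\csym{D})$ and choose $(k,i) \in U$. By the above, the right adjoint of $\mathrm{F}_{ki}$ is $\mathrm{F}_{pk}$ with $(e_{i}A)^{\ast} \simeq Ae_{p}$, and since $\mathrm{F}_{pk} \in \csym{D}$ we get $(p,k) \in U$, whence $p \in \mathtt{N}_{L}(\csym{D}) = N$. Thus every $i \in N$ admits $p \in N$ with $(e_{i}A)^{\ast} \simeq Ae_{p}$, which is exactly the condition of Definition \ref{MainDefinition}.

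The main obstacle is the forcing step: converting the abstract statement ``the adjoint is a $1$-morphism of $\csym{D}$'' into the concrete conclusion that $(e_{j}A)^{\ast}$ is projective and indecomposable of the form $Ae_{p}$. This requires knowing the adjoints as honest projective functors (Lemma \ref{AdjSelf} together with the diagonal computation), that a projective functor preserves projectivity so that one may evaluate at the regular module, and that $(e_{j}A)^{\ast}$ is indecomposable so that no spurious decomposition of the adjoint arises. Once this is secured, the symmetry $\mathtt{N}_{L}(\csym{D}) = \mathtt{N}_{R}(\csym{D})$ and the core condition appear as two facets of the same adjunction.
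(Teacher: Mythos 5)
Your proposal is correct and follows essentially the same route as the paper: both identify the adjoints of $\mathrm{F}_{ij}$ via Lemma \ref{AdjSelf}, use weak fiatness to force $(e_{j}A)^{\ast}$ (respectively $(Ae_{i})^{\ast}$) to be projective indecomposable, and read off $\mathtt{N}_{L} \subseteq \mathtt{N}_{R}$, the reverse inclusion, and the self-injective core condition from the resulting formulas $\mathrm{F}_{pi}$, $\mathrm{F}_{jq}$. The only cosmetic difference is that you invoke left adjoints directly for $\mathtt{N}_{R} \subseteq \mathtt{N}_{L}$, whereas the paper phrases the same step as ``every $1$-morphism is itself a right adjoint'' via the weak antiautomorphism of finite order --- equivalent statements given the paper's definition of weakly fiat; your explicit evaluation-at-$A$ argument for the forcing step, and your remark that Lemma \ref{AdjSelf} also holds on the diagonal $i=j$, are welcome elaborations of points the paper leaves implicit.
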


 \begin{proof}
  From Lemma \ref{AdjSelf} we conclude that the indecomposable $1$-morphism $\mathrm{F}_{ij}$ of $\csym{D}$ has a right adjoint in $\csym{D}$ if and only if there is $k \in \rr{m}$ such that there is an isomorphism of bimodules
  \[
   (e_{j}A)^{\ast} \otimes_{\Bbbk} e_{i}A \simeq Ae_{k} \otimes_{\Bbbk} e_{i}A
  \]
 and $\mathrm{F}_{ki} \in \on{Ob}\csym{D}(\mathtt{i,i})$. Said bimodules are isomorphic if and only if $(e_{j}A)^{\ast} \simeq Ae_{k}$. In particular, we have shown that $\mathrm{F}_{ij} \in \on{Ob}\csym{D}(\mathtt{i,i})$ implies $\mathrm{F}_{ki} \in \on{Ob}\csym{D}(\mathtt{i,i})$, so $\mathtt{N}_{L}$ is a subset of $\mathtt{N}_{R}$.
 
 As remarked in Section \ref{s1.1}, taking right adjoints gives a weak antiautomorphism of finite order, and so every indecomposable $1$-morphism $\mathrm{F}_{ij}$ of $\csym{D}$ itself is a right adjoint. Thus, by the argument above, there is $l$ such that
 \[
  (e_{l}A)^{\ast} \otimes_{\Bbbk} e_{j}A \simeq Ae_{i} \otimes_{\Bbbk} e_{j}A
 \]
 and $\mathrm{F}_{kl} \in \on{Ob}\csym{D}(\mathtt{i,i})$. In particular, $(e_{l}A)^{\ast} \simeq Ae_{i}$. Similarly to the first part of the proof, this shows that $\mathtt{N}_{R}$ is a subset of $\mathtt{N}_{L}$. Hence $\mathtt{N}_{L} = \mathtt{N}_{R}$.
 
 Given $j \in \mathtt{N}_{R}$, choose $i \in \rr{m}$ such that $\mathrm{F}_{ij} \in \on{Ob}\csym{D}(\mathtt{i,i})$. We have shown that, in that case, there is $k \in \mathtt{N}_{L}$ such that $(e_{j}A)^{\ast} \simeq Ae_{k}$. Since $\mathtt{N}_{R} = \mathtt{N}_{L}$, this shows that $\mathtt{N}_{R}$ is a self-injective core.
 \end{proof}
 
 \begin{corollary}\label{DiagonalWeaklyFiat}
  A $2$-subcategory of $\csym{C}_{\!A}$ of the form $\csym{D}_{U_{1}\times U_{2}}$ is weakly fiat if and only if $U_{1} = U_{2}$ and $U_{1}$ is a self-injective core.
 \end{corollary}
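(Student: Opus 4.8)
The plan is to prove the two implications separately, deriving the forward one directly from Lemma~\ref{LRLemma} and the backward one from Lemma~\ref{AdjSelf}. The computation common to both is the identification of the sets $\mathtt{N}_{L}$ and $\mathtt{N}_{R}$ for a $2$-subcategory of product shape. Straight from the definitions, the non-identity indecomposable $1$-morphisms of $\csym{D}_{U_{1}\times U_{2}}$ are exactly the $\mathrm{F}_{kl}$ with $(k,l)\in U_{1}\times U_{2}$, so that for non-empty $U_{1},U_{2}$ one reads off $\mathtt{N}_{L}(\csym{D}_{U_{1}\times U_{2}}) = U_{1}$ and $\mathtt{N}_{R}(\csym{D}_{U_{1}\times U_{2}}) = U_{2}$. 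Since a self-injective core is non-empty by definition, this suffices to treat all the relevant, non-degenerate cases.

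For the forward implication, suppose $\csym{D}_{U_{1}\times U_{2}}$ is weakly fiat. Then Lemma~\ref{LRLemma} immediately gives $\mathtt{N}_{L}(\csym{D}_{U_{1}\times U_{2}}) = \mathtt{N}_{R}(\csym{D}_{U_{1}\times U_{2}})$ together with the fact that this common set is a self-injective core for $A$. Combined with the identification above, this yields $U_{1} = U_{2}$ and that $U_{1}$ is a self-injective core, as desired.

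For the backward implication, assume $U_{1} = U_{2} = U$ with $U$ a self-injective core. The first step is to record that $U$ carries a natural permutation: sending $j \in U$ to the unique element $\phi(j) \in U$ with $Ae_{\phi(j)} \simeq (e_{j}A)^{\ast}$ defines a map $\phi \colon U \to U$, well-defined since $A$ is basic, and injective because $(-)^{\ast}$ is a contravariant equivalence, hence a bijection on isoclasses of indecomposables; as $U$ is finite, $\phi$ is a permutation. It suffices to produce adjoints for indecomposable $1$-morphisms, the identity being self-adjoint. Applying Lemma~\ref{AdjSelf} with $e = e_{i}$ and $f = e_{j}$, the right adjoint of $\mathrm{F}_{ij}$ is tensoring with $(e_{j}A)^{\ast} \otimes_{\Bbbk} e_{i}A \simeq Ae_{\phi(j)} \otimes_{\Bbbk} e_{i}A$, that is, it is $\mathrm{F}_{\phi(j),i}$. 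Since $\mathrm{F}_{ij} \in \on{Ob}\csym{D}_{U\times U}$ forces $i,j \in U$ and $\phi(j) \in U$, this right adjoint again lies in $\csym{D}_{U\times U}$; a symmetric argument using $\phi^{-1}$ produces the left adjoint $\mathrm{F}_{j,\phi^{-1}(i)}$, likewise inside $\csym{D}_{U\times U}$. Finally, the induced weak antiautomorphism acts on index pairs by $\tau(i,j) = (\phi(j),i)$, and a short computation gives $\tau^{2k}(i,j) = (\phi^{k}(i),\phi^{k}(j))$, so that $\tau$ has order dividing twice the order of $\phi$ and is in particular of finite order. This verifies the conditions defining weak fiatness.

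The only genuine work is in the backward direction, and its crux is the bijectivity of $\phi$. This single fact simultaneously guarantees that right adjoints remain inside $\csym{D}_{U\times U}$, that the requisite left adjoints exist there, and that the resulting antiautomorphism has finite order; without surjectivity of $\phi$ one would at best obtain a $2$-category closed under right adjoints but lacking left adjoints, hence not weakly fiat. The forward direction, by contrast, is a direct application of Lemma~\ref{LRLemma} once $\mathtt{N}_{L}$ and $\mathtt{N}_{R}$ have been computed.
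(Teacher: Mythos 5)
Your proposal is correct and follows essentially the same route as the paper: the forward implication is read off from Lemma~\ref{LRLemma} after identifying $\mathtt{N}_{L} = U_{1}$ and $\mathtt{N}_{R} = U_{2}$, and the backward implication applies Lemma~\ref{AdjSelf} to exhibit $\mathrm{F}_{\phi(j),i}$ as the right adjoint of $\mathrm{F}_{ij}$ inside $\csym{D}_{U\times U}$. Your only addition is to make explicit what the paper's terser proof leaves implicit --- that the assignment $j \mapsto \phi(j)$ is a permutation of $U$, whence left adjoints $\mathrm{F}_{j,\phi^{-1}(i)}$ also exist in $\csym{D}_{U\times U}$ and the induced weak antiautomorphism has finite order --- which is a welcome completeness check rather than a different argument.
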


 \begin{proof}
  Assume that $\csym{D}$ is weakly fiat. From the definition we have $U_{1} = \mathtt{N}_{L}$ and $U_{2} = \mathtt{N}_{R}$. From Lemma \ref{LRLemma} it follows that $U_{1} = U_{2}$ and that $U_{1}$ is a self-injective core.
  
  Assume that $U_{1}$ is a self-injective core and let $\mathrm{F}_{ij}$ be an indecomposable $1$-morphism of $\csym{D}_{U\times U}$. Let $k \in U_{1}$ be such that $(e_{j}A)^{\ast} \simeq Ae_{k}$. Then, from Lemma \ref{AdjSelf} it follows that $\mathrm{F}_{ki}$ is right adjoint to $\mathrm{F}_{ij}$. Hence, $\csym{D}_{U_{1}\times U_{1}}$ is weakly fiat.
 \end{proof}

 \begin{proposition}
  Let $U$ be a self-injective core for $A$ and let $e = \sum_{i \in U} e_{i}$. The centralizer subalgebra $eAe$ is self-injective.
 \end{proposition}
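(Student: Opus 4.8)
The plan is to reduce the self-injectivity of $B := eAe$ to the defining property of a self-injective core by combining a standard homological criterion with the observation that idempotent truncation commutes with $\Bbbk$-duality.

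First I would set up the basic bookkeeping. Since the $e_{i}$ with $i \in U$ are primitive and pairwise orthogonal in $A$, they remain primitive and pairwise orthogonal in $B = eAe$ (as $e_{i} B e_{i} = e_{i} A e_{i}$ is local), and $e = \sum_{i \in U} e_{i}$ is their sum, so $\setj{e_{i}}_{i \in U}$ is a \idem\ for $B$. In particular $B$ is a basic finite dimensional $\Bbbk$-algebra whose indecomposable projective right modules are the $e_{i} B = e_{i} A e$, $i \in U$. I would then invoke the standard criterion that a basic finite dimensional algebra $B$ is self-injective if and only if, for each such primitive idempotent $e_{i}$, the left $B$-module $(e_{i} B)^{\ast}$ is projective; this holds because the $(e_{i} B)^{\ast}$ are precisely the indecomposable injective left $B$-modules (with pairwise distinct socles), so the criterion says exactly that every indecomposable injective is projective. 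It therefore suffices to compute $(e_{i} B)^{\ast}$ for each $i \in U$.

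The key step is a compatibility lemma: for any right $A$-module $M$, there is an isomorphism of left $B$-modules
\[
 (Me)^{\ast} \simeq e \cdot M^{\ast},
\]
where $e \cdot M^{\ast}$ denotes the left $B$-module obtained by acting with the idempotent $e$ on the left $A$-module $M^{\ast}$. Indeed, using the vector space decomposition $M = Me \oplus M(1-e)$, the subspace $e \cdot M^{\ast} \subseteq M^{\ast}$ consists exactly of those functionals vanishing on $M(1-e)$, i.e. of the functionals factoring through $Me$; restriction to $Me$ then identifies $e \cdot M^{\ast}$ with $(Me)^{\ast}$, and one checks that this identification intertwines the two $B$-actions. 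I expect the verification of $B$-linearity here, while elementary, to be the most delicate point, since it requires keeping careful track of the left- and right-module structures and of the two different ways $e$ enters.

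With this in hand I would apply the lemma to $M = e_{i} A$. On the one hand $Me = e_{i} A e = e_{i} B$, the indecomposable projective right $B$-module. On the other hand, the self-injective core condition supplies $j \in U$ with $(e_{i} A)^{\ast} \simeq A e_{j}$ as left $A$-modules; applying the additive functor $e \cdot (-)$ and using $e e_{j} = e_{j}$ gives $e \cdot (e_{i} A)^{\ast} \simeq e A e_{j} = B e_{j}$, a projective left $B$-module. Combining these, $(e_{i} B)^{\ast} \simeq B e_{j}$ is projective for every $i \in U$. By the criterion of the first paragraph, every indecomposable injective left $B$-module is projective; equivalently, the assignment $i \mapsto j$ is a (Nakayama) permutation of $U$, and $B = eAe$ is self-injective.
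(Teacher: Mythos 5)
Your proof is correct and takes essentially the same route as the paper's: your key lemma $(Me)^{\ast} \simeq e\cdot M^{\ast}$ is exactly the fact the paper invokes implicitly when asserting that $\Hom_{A}(Ae,-)$ sends the indecomposable injective $(e_{i}A)^{\ast}$ to $(e_{i}Ae)^{\ast}$, and both arguments then feed the self-injective core condition through this truncation to conclude that every indecomposable injective $eAe$-module is projective. The only (harmless) difference is that you apply the defining isomorphism $(e_{i}A)^{\ast} \simeq Ae_{j}$ directly, whereas the paper uses the equivalent reversed form $Ae_{i} \simeq (e_{j}A)^{\ast}$, which follows by finiteness of $U$.
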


 \begin{proof}
  The set $\setj{e_{i} \; | \; i \in U}$ is a \idem \ for $eAe$. Given $i \in U$, the functor $\on{Hom}_{A\!\on{-mod}}(Ae,-): A\!\on{-mod} \rightarrow eAe\!\on{-mod}$ sends the indecomposable projective $Ae_{i}$ to the indecomposable projective $eAe_{i}$ and the indecomposable injective $(e_{i}A)^{\ast}$ to the indecomposable injective $(e_{i}Ae)^{\ast}$. Since $U$ is a self-injective core, we may choose $j \in U$ such that $Ae_{i} \simeq (e_{j}A)^{\ast}$. Then
  \[
   (e_{j}Ae)^{\ast} \simeq \on{Hom}_{A\!\on{-mod}}(Ae,(e_{j}A)^{\ast}) \simeq \on{Hom}_{A\!\on{-mod}}(Ae,Ae_{i}) \simeq eAe_{i},
  \]
  which shows that $eAe$ indeed is self-injective.
 \end{proof}
 
   It is easy to verify that, given two subsets $U_{1},U_{2}$ of $\rr{m}$, the set $\left(U_{1}\times U_{2}\right) \cup \setj{0,1}$ gives a submonoid of $\mathcal{N}(A)$.
  
 \begin{definition}
  Let $U,V$ be subsets of $\rr{m}$. We say that a combinatorial $2$-subcategory $\csym{D}$ of $\csym{C}_{A}$ is {\it $U$-superdiagonal} if $U \subseteq V$ and $\csym{D} = \csym{D}_{U \times V}$. We say that $\csym{D}$ is {\it $U$-subdiagonal} if $V \subseteq U$ and $\csym{D} = \csym{D}_{U \times V}$.
  If $\csym{D}$ is both $U$-superdiagonal and $U$-subdiagonal, then $\csym{D} = \csym{D}_{U\times U}$ and we say that $\csym{D}$ is {\it $U$-diagonal.}
 \end{definition}
 
 In this document, we will focus on the $U$-superdiagonal case. The remaining results of this section also hold in the $U$-subdiagonal case, although Proposition \ref{SidedPreservation} must be modified as described in its formulation. Further, in the $U$-superdiagonal case, the vacuous $J$-cells also constitute left cells, whereas in the $U$-subdiagonal case these constitute right cells. The crucial difference, which is the reason for restricting our attention to the $U$-superdiagonal case, is that Proposition \ref{TransitiveRestriction} is not true in the $U$-subdiagonal case. Neither is the main result of this document, Theorem \ref{MainResult}: a counterexample is given by \cite[Theorem~5.10]{St}.
 
 \begin{proposition}
  A combinatorial $2$-subcategory $\csym{D}$ of $\csym{C}_{\!A}$ is $U$-superdiagonal if and only if it contains $\csym{D}_{U\times U}$ and $\mathtt{N}_{L}(\csym{D}) = U$.
 \end{proposition}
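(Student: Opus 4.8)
The plan is to pass through the bijection of Proposition~\ref{CombinatorialSubcategories}, encoding $\csym{D}$ by the set $W \subseteq \rr{m}\times\rr{m}$ for which $\mathcal{S}(\csym{D}) = \mathcal{S}(W)$. Unravelling the definitions, $\mathtt{N}_{L}(\csym{D})$ and $\mathtt{N}_{R}(\csym{D})$ are exactly the images of $W$ under the first and second coordinate projections, while $\csym{D}$ being $U$-superdiagonal means precisely that $W = U\times V$ for some $V\supseteq U$. Since this bijection is inclusion-preserving by construction, the whole statement reduces to a combinatorial assertion about $W$.

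For the forward implication, I would assume $\csym{D} = \csym{D}_{U\times V}$ with $U\subseteq V$. Then $U\times U \subseteq U\times V = W$, so the corresponding submonoids are nested and hence $\csym{D}_{U\times U}\subseteq\csym{D}$; moreover the first projection of $U\times V$ is $U$ (when $V\neq\varnothing$; the degenerate case $V=\varnothing$ forces $U=\varnothing$ and is trivial), giving $\mathtt{N}_{L}(\csym{D}) = U$. This direction needs nothing beyond the definitions.

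For the converse I would suppose $\csym{D}\supseteq\csym{D}_{U\times U}$ and $\mathtt{N}_{L}(\csym{D}) = U$, and set $V := \mathtt{N}_{R}(\csym{D})$. The inclusion $W\subseteq U\times V$ is immediate, since any $(a,b)\in W$ has $a\in\mathtt{N}_{L}(\csym{D}) = U$ and $b\in\mathtt{N}_{R}(\csym{D}) = V$. The substantive step, and the only one using more than bookkeeping, is the reverse inclusion $U\times V\subseteq W$. Given $(i,j)\in U\times V$, membership $j\in V = \mathtt{N}_{R}(\csym{D})$ supplies a row index $k$ with $\mathrm{F}_{kj}\in\on{Ob}\csym{D}$; crucially, $k$ then occurs as a row index in $\csym{D}$, so $k\in\mathtt{N}_{L}(\csym{D}) = U$. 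As $i,k\in U$, the containment $\csym{D}_{U\times U}\subseteq\csym{D}$ gives $\mathrm{F}_{ik}\in\on{Ob}\csym{D}$, and the composition rule \eqref{CAComposition} yields $\mathrm{F}_{ik}\circ\mathrm{F}_{kj}\simeq\mathrm{F}_{ij}^{\oplus\dim e_{k}Ae_{k}}$. Because $e_{k}\in e_{k}Ae_{k}$ is nonzero, the multiplicity $\dim e_{k}Ae_{k}$ is positive, so $\mathrm{F}_{ij}$ is a direct summand of a composite of $1$-morphisms of $\csym{D}$; since $\csym{D}$ is combinatorial, hence closed under composition, direct summands and isomorphism, I conclude $\mathrm{F}_{ij}\in\on{Ob}\csym{D}$, that is, $(i,j)\in W$.

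Combining the two inclusions gives $W = U\times V$, and $U\subseteq V$ then follows by taking any $i\in U$ and noting $\mathrm{F}_{ij}\in\csym{D}_{U\times U}\subseteq\csym{D}$ for each $j\in U$, whence $j\in\mathtt{N}_{R}(\csym{D}) = V$; thus $\csym{D} = \csym{D}_{U\times V}$ with $U\subseteq V$, i.e.\ $\csym{D}$ is $U$-superdiagonal. I expect the main obstacle to be exactly this reverse inclusion in the converse: one must simultaneously ensure that the witnessing morphism $\mathrm{F}_{kj}$ has its row index $k$ lying in $U$ (so that left-multiplication keeps us inside $\csym{D}_{U\times U}$) and that the resulting composition is nonzero, the latter being precisely where the nonvanishing of $e_{k}Ae_{k}$ is used.
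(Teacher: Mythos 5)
Your proposal is correct and follows essentially the same route as the paper: the converse hinges on exactly the paper's key step, namely that the row index $k$ of the witness $\mathrm{F}_{kj}$ automatically lies in $\mathtt{N}_{L}(\csym{D}) = U$, so left composition with $\mathrm{F}_{ik} \in \csym{D}_{U\times U}$ produces $\mathrm{F}_{ij}$ as a summand via \eqref{CAComposition} and the nonvanishing of $e_{k}Ae_{k}$, with closure under summands supplied by $\csym{D}$ being combinatorial. The only differences are cosmetic: you phrase everything through the set $W$ from Proposition \ref{CombinatorialSubcategories}, and you spell out the final check $U \subseteq \mathtt{N}_{R}(\csym{D})$, which the paper leaves implicit.
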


 \begin{proof}
  Clearly, $\csym{D}$ being $U$-superdiagonal implies both that $\csym{D}$ contains $\csym{D}_{U\times U}$ and that $\mathtt{N}_{L}(\csym{D}) = U$. Assume that $\csym{D}$ contains $\csym{D}_{U \times U}$ and $\mathtt{N}_{L} = U$. Let $j \in \mathtt{N}_{R}$ and let $i \in U$ be such that $\mathrm{F}_{ij} \in \on{Ob}\csym{D}(\mathtt{i,i})$. 
  Given $k \in U$, from the assumption we know that $\mathrm{F}_{ki}$ is a $1$-morphism of $\csym{D}$. 
  Since $e_{i}Ae_{i} \neq 0$, the $1$-morphism $\mathrm{F}_{kj}$ is a direct summand of $\mathrm{F}_{ki} \circ \mathrm{F}_{ij}$.
  Since $\csym{D}$ is finitary, it follows that $\mathrm{F}_{kj}$ is a $1$-morphism of $\csym{D}$. This shows that $\csym{D} = \csym{D}_{U \times \mathtt{N}_{R}(\ccf{D})}$.
 \end{proof}

  Given a graph $\Gamma$, we use the notation of \cite{ET} and denote the {\it zigzag algebra} on $\Gamma$ by $\zigzag(\Gamma)$. As defined in \cite{HK}, the algebra $\zigzag(\Gamma)$ is the quotient of the path algebra of the double quiver on $\Gamma$ by the ideal generated by paths $i \rightarrow j \rightarrow k$ for $i \neq k$, together with elements of the form $\alpha - \beta$, where $\alpha, \beta$ are different $2$-cycles at the same vertex of $\Gamma$. 
  From \cite[Proposition~1]{HK}, we know that a zigzag algebra $\zigzag(\Gamma)$ is weakly symmetric. Let $S_{k}$ be the star graph on $k+1$ vertices, labelled as follows:
  \[
   \begin{tikzcd}[sep = small]
    0 \arrow[d, no head] \arrow[dr, dotted, no head] \arrow[drr, no head] \\
    1 & \cdots & k
   \end{tikzcd}
  \]
  Following \cite{Zi}, we call a zigzag algebra of the form $\zigzag(S_{k})$ a {\it star algebra.} The above labelling of the vertices of $S_{k}$ induces a \idem\ $\setj{e_{0},e_{1},\ldots, e_{k}}$ for $\starv{k}$. 
 \begin{example}\label{StarAlgebras}
  Consider the star algebra $A := \starv{2}$, by definition given as the quotient of the path algebra of
  \[
   \begin{tikzcd}
   1 \arrow[r, bend right, swap, "b_{1}"] & 0 \arrow[r, bend left, "a_{2}"] \arrow[l, swap, bend right, "a_{1}"] & 2 \arrow[l, bend left, "b_{2}"]
   \end{tikzcd}
  \]
 by the ideal generated by $\setj{a_{2}b_{1}, a_{1}b_{2}, b_{2}a_{2}-b_{1}a_{1}}$. 
  The set $\setj{1,0}$ is a self-injective core for $A$, and the shaded part of the eggbox diagram of $\csym{C}_{\!A}$ below corresponds to the $\setj{0,1}$-superdiagonal $2$-subcategory $\csym{D}_{\setj{1,0}\times \setj{1,0,2}}$.
\begin{gather*}
 \begin{array}{|c|}
  \hline \tikzmarkup{left}
  \mathbb{1}_{\mathtt{i}}
  \tikzmarkdown{right}\\
  \hline
 \end{array}
\DrawRedBox[thick] \\
\begin{array}{|c|c|c|}
\hline
\tikzmarkup{left2}\mathrm{F}_{11}&\mathrm{F}_{10}&\mathrm{F}_{12}\\
\hline
\mathrm{F}_{01}&\mathrm{F}_{00}&\mathrm{F}_{02}\tikzmarkdown{right2}\\
\hline
\mathrm{F}_{21}&\mathrm{F}_{20}&\mathrm{F}_{22}\\
\hline
\end{array}\\
\DrawRedBoxZ[thick]
\end{gather*}
 \end{example}
 
\begin{example}
 A weakly fiat, combinatorial $2$-subcategory of $\csym{C}_{\!A}$ is not necessarily of the form $\csym{D}_{U\times U}$. Consider again the star algebra $\starv{2}$ of Example \ref{StarAlgebras}. We have
 \[
  \mathrm{F}_{11}\circ \mathrm{F}_{11} \simeq \mathrm{F}_{11}, \quad \mathrm{F}_{22}\circ \mathrm{F}_{22} \simeq \mathrm{F}_{22} \text{ and } \mathrm{F}_{11}\circ \mathrm{F}_{22} = \mathrm{F}_{22}\circ \mathrm{F}_{11} = 0. \]
 Hence, we may consider the combinatorial $2$-subcategory $\csym{D}_{\setj{(1,1),(2,2)}}$. Being a zigzag algebra, $\starv{2}$ is weakly symmetric, and so from Lemma \ref{AdjSelf} it follows that both $\mathrm{F}_{11}$ and $\mathrm{F}_{22}$ are self-adjoint. Hence $\csym{D}_{\setj{(1,1),(2,2)}}$ is fiat, and in particular also weakly fiat. But the set $\setj{(1,1),(2,2)}$ is not a product of subsets of $\setj{1,0,2}$.
\end{example}
 
 \begin{remark}
  A self-injective core is not unique for an algebra $A$ with a fixed \idem\ $\setj{e_{1},\ldots, e_{m}}$. If $A$ is weakly symmetric, then any non-empty subset of $\rr{m}$ gives a self-injective core, yielding $2^{m}-1$ different cores for $A$. More generally, if $A$ is self-injective with Nakayama permutation $\nu$ of $\rr{m}$, then the self-injective cores of $A$ are given by the unions of orbits of $\nu$.
  
  Moreover, not every algebra $A$ admits a self-injective core: if there are no non-zero projective-injective modules over $A$, then $A$ cannot have a self-injective core. The existence of such a module does not guarantee the existence of a self-injective core, either. A family of counterexamples is given by hereditary algebras of type $A$. Let us label the uniformly oriented quiver for $A_{n}$ in the standard way:
  \[
   \begin{tikzcd}
    1 \arrow[r] & 2 \arrow[r] & \cdots \arrow[r] & n
   \end{tikzcd}
  \]
 Then the unique non-zero projective-injective module is given by $A_{n}e_{1} \simeq (e_{n}A_{n})^{\ast}$. A self-injective core containing $1$ would thus have to contain $n$. However, $A_{n}e_{n}$ is not injective.
 
 In view of the above described non-uniqueness, our choice of terminology may seem peculiar. It is motivated by the essential role of the self-injective core $U$ and its associated $U$-diagonal $2$-subcategory of $\csym{C}_{\!A}$ in the classification of simple transitive $2$-representations of any $U$-superdiagonal $2$-subcategory of $\csym{C}_{\!A}$. 
 \end{remark}

 \begin{example}
  Let $\setj{e_{1},\ldots, e_{m}}$ be a \idem\ for $A$. Choose $U \subseteq \rr{m}$ and let $e = \sum_{i \in U}e_{i}$. Assume that $eAe$ is self-injective. This is not sufficient to conclude that $U$ is a self-injective core for $A$. Consider the algebra $A = \Bbbk Q/I$, where $Q$ is the quiver
  \[
   \begin{tikzcd}
    1 \arrow[r, bend left, "\alpha"] & 2 \arrow[l, bend left, "\beta"]
   \end{tikzcd}
  \]
 and $I$ is the ideal $\left\langle \alpha \beta \right\rangle$. The algebra $e_{2}Ae_{2}$ is isomorphic to $\Bbbk$, and hence in particular it is self-injective. But $\setj{2}$ is not a self-injective core, since the module $Ae_{2}$ is not injective.
 \end{example}

\subsection{\texorpdfstring{Cells of $U$-superdiagonal $2$-subcategories of $\csym{C}_{\!A}$}{Cell structure of U-superdiagonal 2-subcategories of C(A)}}\label{SuperDiagCellsSection}

For the remainder of this document, we let $U \subseteq \rr{m}$ be a self-injective core for $A$ and we let $\csym{D}$ be a $U$-superdiagonal $2$-subcategory of $\csym{C}_{\!A}$.

It is rather clear that the left, right and two-sided preorders of a combinatorial $2$-subcategory $\csym{D}$ of $\csym{C}_{\!A}$ are coarser than the restrictions of the respective preorders for $\csym{C}_{\!A}$ to the set $\mathcal{S}(\csym{D})$.
In many cases they are not strictly coarser:
\begin{proposition}\label{SidedPreservation}
\hspace{2em}
\begin{enumerate}[label = (\alph*)]
 \item  The left preorder of $\csym{D}$ coincides with the restriction of the left preorder of $\csym{C}_{\!A}$ to $\mathcal{S}(\csym{D})$.
 \item
 Dually, if $\csym{B}$ is a $U$-subdiagonal $2$-subcategory of $\csym{C}_{\!A}$, then the right preorder of $\csym{B}$ concides with the restriction of the right preorder of $\csym{C}_{\!A}$ to $\mathcal{S}(\csym{B})$.
 \item Hence, the left, right and two-sided preorders of $\csym{D}_{U\times U}$ all coincide with the respective restrictions.
\end{enumerate}
\end{proposition}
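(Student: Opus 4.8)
The plan is to prove each asserted coincidence by separating the two inclusions. One inclusion is immediate and shared by all three parts: every $1$-morphism of $\csym{D}$ (resp. $\csym{B}$) is in particular a $1$-morphism of $\csym{C}_{\!A}$, so any witness realizing a preorder relation \emph{inside} the subcategory realizes it in $\csym{C}_{\!A}$ as well. Thus the preorders of the subcategory are contained in the restrictions of those of $\csym{C}_{\!A}$, and the whole content of the proposition is the reverse inclusion. To prepare for it, I would first read off from the composition rule \eqref{CAComposition} the shape of the left preorder of $\csym{C}_{\!A}$ on $\mathcal{J}_{1}$: since $\mathrm{F}_{ab}\circ\mathrm{F}_{cd}\simeq\mathrm{F}_{ad}^{\oplus\dim e_{b}Ae_{c}}$, one has $\mathrm{F}_{ij}\geq_{L}\mathrm{F}_{kl}$ (among non-identity $1$-morphisms) if and only if $j=l$; the relations involving $\mathbb{1}_{\mathtt{i}}$ are trivial, as $\mathbb{1}_{\mathtt{i}}$ is the least element of the left preorder both in $\csym{C}_{\!A}$ and in any combinatorial subcategory.

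For part (a), suppose $\mathrm{F}_{ij},\mathrm{F}_{kl}\in\mathcal{S}(\csym{D})$ with $\mathrm{F}_{ij}\geq_{L}\mathrm{F}_{kl}$ in $\csym{C}_{\!A}$, so that $i,k\in U$, $j,l\in V$ and $j=l$. The key step is to exhibit the connecting $1$-morphism $\mathrm{F}_{ik}$: it lies in $\csym{D}=\csym{D}_{U\times V}$ because its first index $i\in U$ and its second index $k\in U\subseteq V$, and it is exactly here that the superdiagonal hypothesis $U\subseteq V$ is used. Since $e_{k}Ae_{k}\ni e_{k}\neq 0$, we get $\mathrm{F}_{ik}\circ\mathrm{F}_{kl}\simeq\mathrm{F}_{il}^{\oplus\dim e_{k}Ae_{k}}=\mathrm{F}_{ij}^{\oplus\dim e_{k}Ae_{k}}$, so $\mathrm{F}_{ij}$ is a direct summand of $\mathrm{F}_{ik}\circ\mathrm{F}_{kl}$ and hence $\mathrm{F}_{ij}\geq_{L}\mathrm{F}_{kl}$ already in $\csym{D}$. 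This yields the reverse inclusion and so (a). Part (b) is dual: for a $U$-subdiagonal $\csym{B}=\csym{D}_{U\times V}$ with $V\subseteq U$, the right preorder of $\csym{C}_{\!A}$ satisfies $\mathrm{F}_{ij}\geq_{R}\mathrm{F}_{kl}$ iff $i=k$, and the connecting $1$-morphism is now $\mathrm{F}_{lj}$, which lies in $\csym{B}$ because $l\in V\subseteq U$ and $j\in V$; here it is the subdiagonal hypothesis $V\subseteq U$ that is invoked, after which one computes $\mathrm{F}_{il}\circ\mathrm{F}_{lj}\simeq\mathrm{F}_{ij}^{\oplus\dim e_{l}Ae_{l}}$ and concludes as before.

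For part (c), observe that $\csym{D}_{U\times U}$ is simultaneously $U$-superdiagonal and $U$-subdiagonal, so parts (a) and (b) immediately give that its left and right preorders coincide with the respective restrictions. For the two-sided preorder only the reverse inclusion remains, and I would deduce it from (a) and (b) by connecting any two non-identity $1$-morphisms $\mathrm{F}_{ij},\mathrm{F}_{kl}$ (all indices in $U$) through one left step and one right step: by (a) we have $\mathrm{F}_{il}\geq_{L}\mathrm{F}_{kl}$ inside $\csym{D}_{U\times U}$, and by (b) we have $\mathrm{F}_{ij}\geq_{R}\mathrm{F}_{il}$ inside $\csym{D}_{U\times U}$, whence $\mathrm{F}_{ij}\geq_{J}\mathrm{F}_{kl}$ in $\csym{D}_{U\times U}$. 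Since all of $\mathcal{J}_{1}$ is a single $J$-cell of $\csym{C}_{\!A}$, this shows the restricted $\geq_{J}$ is already realized inside $\csym{D}_{U\times U}$.

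I expect the only genuinely delicate point to be the identification of the correct connecting $1$-morphism together with the verification that it belongs to the subcategory. This is precisely where the super- or sub-diagonal hypothesis enters, and it is also where the argument would break down in the opposite case; this is consistent with the remark that Proposition~\ref{SidedPreservation} must be restated for $U$-subdiagonal categories and that its naive analogue can genuinely fail.
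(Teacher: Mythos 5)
Your proof is correct and follows essentially the same route as the paper: the inclusion of the subcategory's preorder into the restriction is immediate, the characterization $\mathrm{F}_{ij}\geq_{L}\mathrm{F}_{kl}\Leftrightarrow j=l$ reduces everything to exhibiting a connecting $1$-morphism, and your witness $\mathrm{F}_{ik}$ (with both indices in $U$, hence in $\csym{D}_{U\times U}\subseteq\csym{D}$) is exactly the paper's choice up to relabelling. You merely spell out parts (b) and (c) explicitly where the paper says ``dual'' and ``immediate consequence,'' and your composition of one left and one right step for the two-sided preorder is the intended argument.
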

\begin{proof}
 We prove the first statement. The second one is dual, and the third one is an immediate consequence of the first two.

 Let $\leq_{L}^{A}$ denote the left preorder of $\csym{C}_{\!A}$ and let $\leq_{L}^{\ccf{D}}$ denote the left preorder of $\csym{D}$. In view of the observation preceding the proposition, it suffices to show that, for indecomposable $1$-morphisms $\mathrm{F},\mathrm{G}$ of $\csym{D}$, the condition $\mathrm{F} \leq_{L}^{A} \mathrm{G}$ implies $\mathrm{F} \leq_{L}^{\ccf{D}} \mathrm{G}$. It is clear that $[\mathbb{1}_{\mathtt{i}}]$ remains the unique minimal element the cell structure of a combinatorial $2$-subcategory of $\csym{C}_{\!A}$, so we may write $\mathrm{F} = \mathrm{F}_{ij}$ and $\mathrm{G} = \mathrm{F}_{kl}$ with $i,k \in U$.
 $\mathrm{F}_{ij} \leq_{L}^{A} \mathrm{F}_{kl}$ is equivalent to $j = l$. We thus need to show that, for $i,k \in U$ and $j$ such that $\mathrm{F}_{ij}, \mathrm{F}_{kj} \in \on{Ob}\csym{D}(\mathtt{i,i})$, there is a $1$-morphism $H$ of $\csym{D}$ such that $\mathrm{F}_{kj}$ is a direct summand of $H \circ \mathrm{F}_{ij}$. Let $\mathrm{H} = \mathrm{F}_{ki}$. Since $i,k \in U$ and $\csym{D}$ contains $\csym{D}_{U \times U}$, we know that $\mathrm{F}_{ki}$ is a $1$-morphism of $\csym{D}$. And since $e_{i}Ae_{i} \neq 0$, we see that $\mathrm{F}_{kj}$ indeed is a direct summand of $\mathrm{F}_{ki} \circ \mathrm{F}_{ij} \simeq \mathrm{F}_{kj}^{\oplus \dim e_{i}Ae_{i}}$. The result follows.
\end{proof}

\begin{proposition}\label{VacuousCellsExist}
 Let $\mathrm{F} \in \mathcal{S}(\csym{D})\setminus\setj{[\mathbb{1}_{\mathtt{i}}]}$. Let let $i \in U$ and $j \in \rr{m}$ be such that $\mathrm{F} = \mathrm{F}_{ij}$.
 \begin{enumerate}[label = (\alph*)]
  \item The following are equivalent:
  \begin{enumerate}[label = (\roman*)]
  \item There is a $1$-morphism $\mathrm{G} \in \mathcal{S}(\csym{D}_{U\times U})$ such that $\mathrm{F}$ is $J$-equivalent to $G$ inside $\csym{D}$. \label{GProps}
  \item There is $h \in U$ such that $e_{j}Ae_{h} \neq 0$.
  \end{enumerate}
  \item If there is no $h$ as above, then $\mathrm{F}$ lies in a maximal $J$-cell of $\csym{D}$, which is not idempotent.
 \end{enumerate}
\end{proposition}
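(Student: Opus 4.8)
The plan is to analyze the composition rule \eqref{CAComposition} restricted to $\csym{D}$ to pin down the $J$-cell structure around $\mathrm{F} = \mathrm{F}_{ij}$, using the monoid structure $\mathcal{N}(A)$ together with the defining condition that $\csym{D} = \csym{D}_{U \times V}$ with $U \subseteq V$ and $U$ a self-injective core. Throughout I will repeatedly use that the indecomposable $1$-morphisms of $\csym{D}$ are precisely the $\mathrm{F}_{kl}$ with $k \in U$ and $l \in V$, and that composition $\mathrm{F}_{kl} \circ \mathrm{F}_{pq}$ is nonzero (and then equals $\mathrm{F}_{kq}^{\oplus \dim e_{l}Ae_{p}}$) exactly when $e_{l}Ae_{p} \neq 0$.

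For part (a), I would argue both implications. For \ref{GProps} $\Rightarrow$ (ii): if $\mathrm{F}_{ij}$ is $J$-equivalent in $\csym{D}$ to some $\mathrm{G} = \mathrm{F}_{gh'} \in \mathcal{S}(\csym{D}_{U \times U})$ (so $g, h' \in U$), then in particular $\mathrm{G} \geq_{J} \mathrm{F}$, meaning $\mathrm{G}$ is a summand of some composite $\mathrm{H}_{2} \circ \mathrm{F} \circ \mathrm{H}_{1}$ with $\mathrm{H}_{1}, \mathrm{H}_{2}$ $1$-morphisms of $\csym{D}$; tracing the composition rule, the existence of a nonzero such composite with second index $h' \in U$ forces a nonzero $e_{j}Ae_{h}$ for some $h \in U$ (the index $j$ of $\mathrm{F}$ must be "absorbable" on the right by an indecomposable $\mathrm{F}_{h\,?}$ of $\csym{D}$, whose first index $h$ lies in $U$). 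For (ii) $\Rightarrow$ \ref{GProps}: given $h \in U$ with $e_{j}Ae_{h} \neq 0$, I would produce an explicit $\mathrm{G} \in \mathcal{S}(\csym{D}_{U\times U})$ two-sidedly equivalent to $\mathrm{F}$. Since $U$ is a self-injective core, pick $i' \in U$ with $(e_{i}A)^{\ast} \simeq Ae_{i'}$, so that $\mathrm{F}$ has a right adjoint partner inside the diagonal machinery of Lemma~\ref{AdjSelf}. The composite $\mathrm{F}_{hj'} \circ \mathrm{F}_{ij}$ for suitable $j'$, together with $e_{j}Ae_{h} \neq 0$, yields a nonzero $1$-morphism of $\csym{D}_{U \times U}$ lying $\geq_{J} \mathrm{F}$; combined with the reverse comparison obtained by composing back (using $e_{i}Ae_{i} \neq 0$ and $U \subseteq V$ to reconstruct $\mathrm{F}_{ij}$ as a summand), this gives $J$-equivalence. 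I expect this reconstruction to be the technical heart of the argument.

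For part (b), suppose no such $h \in U$ exists, i.e.\ $e_{j}Ae_{h} = 0$ for every $h \in U$. I would first show $\mathrm{F}_{ij}$ is $J$-maximal in $\csym{D}$: any $\mathrm{G}' >_{J} \mathrm{F}$ would arise as a summand of a composite built from $\mathrm{F}$ on one side, but every composite $\mathrm{F}_{pq} \circ \mathrm{F}_{ij}$ with $\mathrm{F}_{pq}$ indecomposable in $\csym{D}$ (so $p \in U$) vanishes because it requires $e_{j}Ae_{p} \neq 0$ with $p \in U$, contradicting the hypothesis; composing on the other side can only change the first index, keeping $\mathrm{F}$ within its own $J$-class or below, so nothing strictly larger appears. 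This shows $\mathrm{F}$ sits in a maximal $J$-cell $\mathcal{J}$. To see $\mathcal{J}$ is not idempotent, recall a $J$-cell is idempotent iff some $\mathrm{H}$ is a summand of a composite $\mathrm{G}_{2} \circ \mathrm{G}_{1}$ with all three $1$-morphisms in $\mathcal{J}$. But the only way the composition $\mathrm{F}_{kl} \circ \mathrm{F}_{ij}$ (with $k \in U$) is nonzero is when $e_{j}Ae_{?}$-type products survive, which forces the middle indices to meet $U$; under our hypothesis every composite of two elements of this cell is zero, so $\mathcal{J}$ cannot contain a witnessing triple. Hence $\mathcal{J}$ is a "vacuous" non-idempotent maximal $J$-cell.

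The main obstacle I anticipate is the careful index-bookkeeping in part (a), specifically producing the diagonal witness $\mathrm{G}$ and verifying two-sided equivalence in both directions; here the self-injective core hypothesis on $U$ and the inclusion $U \subseteq V$ must be invoked precisely, since it is exactly the asymmetry between the $U$-side (closed under the core condition) and the $V$-side that governs whether the index $j$ can be brought back into $U$.
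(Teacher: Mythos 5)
Your strategy is the paper's strategy: prove (a) by exhibiting explicit composites and using $\mathtt{N}_{L}(\csym{D})=U$ to extract the index $h$ from an indecomposable right factor, and prove (b) by showing that composition with non-identity indecomposables on the appropriate side annihilates $\mathrm{F}_{ij}$, so that the left cell $\mathcal{L}_{j}$ is a maximal $J$-cell on which all compositions vanish. The genuine problem is that your displayed composites sit on the wrong side of the composition rule \eqref{CAComposition}, which you yourself quote correctly at the outset: with $\mathrm{F}_{kl}\circ\mathrm{F}_{pq}\simeq \mathrm{F}_{kq}^{\oplus\dim e_{l}Ae_{p}}$, the hypothesis $e_{j}Ae_{h}\neq 0$ is exploited by composing $\mathrm{F}_{ij}$ on the \emph{right} with a $1$-morphism whose \emph{first} index is $h$, as in the paper's witness $\mathrm{F}_{kl}\in\on{Ob}\on{add}\setj{\mathrm{F}_{ki}\circ\mathrm{F}_{ij}\circ\mathrm{F}_{hl}}$ with $h,i,k,l\in U$. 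Your composite $\mathrm{F}_{hj'}\circ\mathrm{F}_{ij}$ instead equals $\mathrm{F}_{hj}^{\oplus\dim e_{j'}Ae_{i}}$: it never invokes $e_{j}Ae_{h}$ at all, and its output $\mathrm{F}_{hj}$ lies in $\mathcal{S}(\csym{D}_{U\times U})$ only when $j\in U$ --- that is, the step fails exactly in the nontrivial case $j\notin U$. The same transposition recurs in (b): under the hypothesis $e_{j}Ae_{h}=0$ for all $h\in U$, it is $\mathrm{F}_{ij}\circ\mathrm{F}_{pq}\simeq\mathrm{F}_{iq}^{\oplus\dim e_{j}Ae_{p}}$ that vanishes, whereas your $\mathrm{F}_{pq}\circ\mathrm{F}_{ij}$ requires $e_{q}Ae_{i}\neq 0$ and merely changes the first index --- this is in fact the composition you correctly describe as ``only changing the first index,'' so the two sides have simply been swapped throughout. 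Once every composite is transposed, your argument coincides with the paper's, including the reverse comparison $\mathrm{F}_{ij}\in\on{Ob}\on{add}\setj{\mathrm{F}_{ik}\circ\mathrm{F}_{kl}\circ\mathrm{F}_{lj}}$ (which, as the paper notes, needs no hypothesis on $h$, only that $l\in U$ and $j\in V$ make $\mathrm{F}_{lj}$ a $1$-morphism of $\csym{D}$), your (i)$\Rightarrow$(ii) ``absorbable on the right'' remark, and the maximality and non-idempotency arguments in (b), which the paper routes through Proposition \ref{SidedPreservation} but which your direct version captures in substance.

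A second, conceptual misdiagnosis: the self-injective core hypothesis plays no role in this proposition. You pick $i'\in U$ with $(e_{i}A)^{\ast}\simeq Ae_{i'}$ ``so that $\mathrm{F}$ has a right adjoint partner,'' but $i'$ is never used afterwards, and the paper's proof is purely combinatorial, using only that $\csym{D}$ contains $\csym{D}_{U\times U}$, that $\mathtt{N}_{L}(\csym{D})=U$, and that $U\subseteq V$. Adjunctions and the core condition enter elsewhere (Lemma \ref{AdjSelf}, Corollary \ref{DiagonalWeaklyFiat}, Proposition \ref{DExtUDiagonal}), not in the cell combinatorics here; your closing claim that the self-injective core hypothesis ``must be invoked precisely'' in part (a) therefore attributes the result to the wrong hypothesis, even though this does not by itself invalidate the (corrected) argument.
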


\begin{proof}
 Since $\mathrm{G}$ is not isomorphic to $\mathbb{1}_{\mathtt{i}}$, without loss of generality we may assume $\mathrm{G} = \mathrm{F}_{kl}$, for some $k,l \in U$. 
 
 Suppose that there is $h \in U$ such that $e_{j}Ae_{h} \neq 0$. We then have
 \[
  \mathrm{F}_{kl} \in \on{Ob}\on{add}\setj{\mathrm{F}_{ki}\circ \mathrm{F}_{ij} \circ \mathrm{F}_{hl}},
 \]
 where $h,i,k,l \in U$, so that $\mathrm{F}_{ki},\mathrm{F}_{hl} \in \on{Ob}\csym{D}(\mathtt{i,i})$. This shows that $\mathrm{F}_{kl} \geq_{J} \mathrm{F}_{ij}$. Further, we have $\mathrm{F}_{ij} \geq_{J} \mathrm{F}_{kl}$, since
 \[
  \mathrm{F}_{ij} \in \on{Ob}\on{add}\setj{\mathrm{F}_{ik}\circ \mathrm{F}_{kl} \circ \mathrm{F}_{lj}}.
 \]
 Note that we did not use the assumption about $h$ for this latter condition.
 
 Now assume that $G$ is as specified in \ref{GProps}.
 Again let $\mathrm{G} = \mathrm{F}_{kl}$, with $k,l \in U$. By definition, there are $\mathrm{H},\mathrm{H}'$ such that $\mathrm{F}_{kl} \simeq \mathrm{H} \circ \mathrm{F}_{ij} \circ \mathrm{H}'$.
 Due to the biadditivity of composition of $1$-morphisms, we may assume $\mathrm{H},\mathrm{H}'$ to be indecomposable. If we write $\mathrm{H}' = \mathrm{F}_{xy}$, then, clearly, it is necessary that $e_{j}Ae_{x} \neq 0$. Further, we have $x \in \mathtt{N}_{L}(\csym{D}) = U$. We may now let $h:=x$.
 
 If $\mathrm{F}_{ij}$ is not $J$-equivalent to the $1$-morphisms of $\csym{D}_{U\times U}$, then $e_{j}Ae_{h} = 0$, for all $h \in U$. But $\csym{D}$ is $U$-superdiagonal, hence in particular $\mathtt{N}_{L} = U$. This implies that, for any $\mathrm{H} \in \mathcal{S}(\csym{D})\setminus\setj{\mathbb{1}_{\mathtt{i}}}$,
 we have $\mathrm{F}_{ij} \circ \mathrm{H} = 0$. Thus, for $\mathrm{H}' \in \mathcal{S}(\csym{D})$, the statements $\mathrm{H}' \geq_{L}^{\ccf{D}} \mathrm{F}_{ij}$ and $\mathrm{H}' \geq_{J}^{\ccf{D}} \mathrm{F}_{ij}$ are equivalent. From Proposition \ref{SidedPreservation} we infer that the left cell $\mathcal{L}_{j} = \setj{[\mathrm{F}_{yj}] \; | \; y \text{ such that } \mathrm{F}_{yj} \in \on{Ob}\csym{D}(\mathtt{i,i})}$ is a maximal left cell in $\csym{D}$, and so 
 \[
\mathcal{L}_{j} = \setj{\mathrm{H}' \in \mathcal{S}(\csym{D}) \; | \; \mathrm{H}' \geq_{J}^{\ccf{D}} \mathrm{F}_{ij}}.
 \]
 It follows that $\mathcal{L}_{j}$ is a maximal $J$-cell of $\csym{D}$. All of its elements are annihilated by right composition with non-identity indecomposable $1$-morphisms, and so composition of any two elements of $\mathcal{L}_{j}$ is zero, which proves that $\mathcal{L}_{j}$ is not idempotent.
\end{proof}

As a consequence, $\csym{D}$ has exactly two idempotent $J$-cells. The $J$-minimal among the two is given by $\mathbb{1}_{\mathtt{i}}$, the other is the $J$-cell containing  $F_{ij} \in \mathcal{S}(U\times U)$. We denote the former by $\mathcal{J}_{0}^{\ccf{D}}$ and the latter by $\mathcal{J}_{1}^{\ccf{D}}$. If there is no risk of ambiguity, we may omit the superscript $\csym{D}$ and write $\mathcal{J}_{0},\mathcal{J}_{1}$.

The left and right cell structures of $\csym{D}$ restricted to the union of the idempotent $J$-cells $\mathcal{J}_{0}^{\ccf{D}},\mathcal{J}_{1}^{\ccf{D}}$ is given by the restriction of the respective cell structures of $\csym{C}_{\!A}$. 
Additionally, $\csym{D}$ admits a (possibly empty) set of mutually incomparable, $J$-maximal, non-idempotent $J$-cells, each strictly $J$-greater than the idempotent $J$-cells. Such a $J$-cell is also a left cell, and the right cells inside it are singletons. We refer to such $J$-cells as {\it vacuous cells}.

The terminology is motivated by the fact that vacuous cells can be ignored when considering our problem of classification of simple transitive $2$-representations. 
A vacuous cell $\mathcal{J}$ is $J$-maximal and non-idempotent, and hence, as a consequence of \cite[Proposition~3]{CM}, it is annihilated by every simple transitive $2$-representation, so we may replace $\csym{D}$ by its quotient by the $2$-ideal generated by $\setj{ \on{id}_{F} \; | \; F \in \mathcal{J}}$. 

\begin{proposition}\label{Cell2Reps}
 Let $\mathcal{L},\mathcal{L}'$ be two left cells of $\csym{D}$. The cell $2$-representations $\mathbf{C}_{\mathcal{L}}, \mathbf{C}_{\mathcal{L}'}$ are equivalent if and only if they have the same apex.
\end{proposition}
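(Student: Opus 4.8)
The plan is to prove the two implications separately: the forward one is formal, and the converse carries all the content.

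For the forward implication I would use that the apex of a transitive $2$-representation is intrinsic — it is the $J$-greatest $J$-cell acting non-trivially — and hence is preserved by equivalences, so $\mathbf{C}_{\mathcal{L}} \simeq \mathbf{C}_{\mathcal{L}'}$ forces a common apex. For the converse I would first record the apex of each $\mathbf{C}_{\mathcal{L}}$. If $\mathcal{L} = \setj{[\mathbb{1}_{\mathtt{i}}]}$ the apex is $\mathcal{J}_0$, and this is the unique left cell with that apex, so there is nothing to prove; if $\mathcal{L} \subseteq \mathcal{J}_1$ the apex is $\mathcal{J}_1$. The one delicate point is a vacuous cell $\mathcal{L}$: by Proposition \ref{VacuousCellsExist} and \eqref{CAComposition} all internal compositions of $\mathcal{L}$ vanish, so $\mathcal{L}$ annihilates $\mathbf{N}_{\mathcal{L}}$ while the $1$-morphisms of $\mathcal{J}_1$ still act non-trivially; hence $\mathbf{C}_{\mathcal{L}}$ again has apex $\mathcal{J}_1$. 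The converse therefore reduces to the single assertion that all cell $2$-representations with apex $\mathcal{J}_1$ are mutually equivalent.

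To prove this I would fix a reference left cell $\mathcal{L}_{l_0}$ with $l_0 \in U$ and show, via Lemma \ref{StdArg}, that every $\mathbf{M} = \mathbf{C}_{\mathcal{L}}$ with apex $\mathcal{J}_1$ is equivalent to $\mathbf{C}_{\mathcal{L}_{l_0}}$; transitivity of equivalence then finishes. For any such $\mathcal{L}$, the indecomposables of $\mathbf{N}_{\mathcal{L}}$ are indexed by $U$, and \eqref{CAComposition} shows that $\mathrm{F}_{k l_0}$ carries the object indexed by $a$ to $\dim(e_{l_0}Ae_{a})$ copies of the object indexed by $k$. Setting $Q = eAe$ with $e = \sum_{i \in U}e_i$, this is exactly the action of $Qf_k \otimes_{\Bbbk} f_{l_0}Q \otimes_Q -$, the right module $f_{l_0}Q = e_{l_0}Ae$ being projective precisely because $l_0 \in U$. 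Thus the second hypothesis of Lemma \ref{StdArg} holds as soon as the underlying algebra of $\mathbf{C}_{\mathcal{L}}$ is identified with $Q$; and granting that identification, the first (Cartan) hypothesis holds too, since both $\mathbf{C}_{\mathcal{L}}$ and $\mathbf{C}_{\mathcal{L}_{l_0}}$ then carry the Cartan matrix of $eAe$, which is self-injective by the Proposition above. Everything reduces to the structural claim that the algebra of every apex-$\mathcal{J}_1$ cell $2$-representation equals $eAe$, independently of $\mathcal{L}$.

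I would obtain this by restricting $\mathbf{C}_{\mathcal{L}}$ along the inclusion $\csym{D}_{U\times U} \hookrightarrow \csym{D}$ of the diagonal subcategory, which is weakly fiat by Corollary \ref{DiagonalWeaklyFiat}. On $\csym{D}_{U\times U}$ the action matrices of $\mathbf{C}_{\mathcal{L}}$ coincide on the nose with those of the defining representation on $eAe\!\on{-proj}$, so once the restriction is known to be simple transitive, the classification of simple transitive $2$-representations of the weakly fiat $\csym{D}_{U\times U}$ forces its algebra to be $eAe$. The main obstacle is exactly this preservation of simple transitivity under restriction (cf.\ Proposition \ref{TransitiveRestriction}). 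I would settle it by a domination argument: for $l \in V\setminus U$ the right $Q$-module $e_{l}Ae$ is a quotient of a projective $\bigoplus_b (f_bQ)^{m_b}$ $(b \in U)$, and since $Qf_k$ is projective, $\mathbf{C}_{\mathcal{L}}\mathrm{F}_{kl}$ lands in projectives and is objectwise a direct summand of $\bigoplus_b (\mathbf{C}_{\mathcal{L}}\mathrm{F}_{kb})^{m_b}$; hence any ideal stable under the diagonal $1$-morphisms $\mathrm{F}_{kb}$ is automatically stable under all of $\csym{D}$. As $\mathbf{C}_{\mathcal{L}}$ is simple transitive over $\csym{D}$, its restriction to $\csym{D}_{U\times U}$ has no non-trivial stable ideals. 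It is precisely here that the superdiagonal hypothesis $U \subseteq V$ and the self-injective core condition are indispensable; the analogous domination fails in the $U$-subdiagonal case, consistent with the failure of the main theorem there.
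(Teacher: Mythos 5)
Your apex bookkeeping (including the observation that vacuous cells, being non-idempotent, cannot be apexes, so their cell $2$-representations also have apex $\mathcal{J}_1$) is correct, and your overall plan — run the restriction-to-$\csym{D}_{U\times U}$ machinery on $\mathbf{M}=\mathbf{C}_{\mathcal{L}}$ and conclude via Lemma \ref{StdArg} — is a genuinely different route from the paper's, which instead computes $\on{End}_{\ccf{D}(\mathtt{i,i})}(\mathrm{F}_{U\times\setj{j}})\simeq e_UAe_U\otimes_{\Bbbk}e_jAe_j$ explicitly, realizes $\mathbf{C}_{\mathcal{L}}$ as a $2$-subrepresentation of the restriction of $\mathbf{C}_{\overline{\mathcal{L}}}$ from $\csym{C}_A$, and restricts the explicit equivalence $\mathbf{C}_{\overline{\mathcal{L}}}\xrightarrow{\sim}\mathbf{C}_{\overline{\mathcal{L}'}}$ of \cite[Proposition~9]{MM5}. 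However, your load-bearing step has a genuine gap: the domination argument for simple transitivity of $\on{Res}^{\ccf{D}}_{\ccf{D}_{U\times U}}(\mathbf{C}_{\mathcal{L}})$. An ideal is a condition on \emph{morphisms}, so to deduce $\mathbf{C}_{\mathcal{L}}\mathrm{F}_{kl}(f)\in\mathbf{I}$ from $\mathbf{C}_{\mathcal{L}}\mathrm{F}_{kb}(f)\in\mathbf{I}$ ($b\in U$) you need the summand relation to be realized by \emph{natural} transformations $\iota,\pi$ with $\pi\iota=\id$; an objectwise direct summand relation (both sides being sums of copies of $Qf_k$) gives nothing. Granting the identification $\mathbf{C}_{\mathcal{L}}\mathrm{F}_{kl}\simeq Qf_k\otimes_{\Bbbk}e_lAe\otimes_Q-$, a natural splitting of your covering map would force $e_lAe$ to be projective as a right $eAe$-module, which fails in general for $l\in V\setminus U$: for $A=\zigzag(S_k)$ and $U=\setj{0}$, the module $e_1Ae_0$ is the one-dimensional simple over $e_0Ae_0\simeq\Bbbk[x]/(x^2)$. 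What does exist is a natural \emph{epimorphism} $\bigoplus_b(\mathbf{C}_{\mathcal{L}}\mathrm{F}_{kb})^{m_b}\twoheadrightarrow\mathbf{C}_{\mathcal{L}}\mathrm{F}_{kl}$, whose components split non-naturally because all values are projective, and this does suffice for ideal transfer ($\mathrm{H}(f)=\mathrm{H}(f)\circ\eta_X\circ s_X=\eta_Y\circ\mathrm{G}(f)\circ s_X\in\mathbf{I}$); but the natural epimorphism itself presupposes the bimodule description of $\mathbf{C}_{\mathcal{L}}\mathrm{F}_{kl}$ for $l\notin U$, which you never establish. Lemma \ref{DiagonalActionProjective} covers only diagonal $1$-morphisms (and $Qf_k\otimes_{\Bbbk}e_lAe$ is generally not a projective bimodule), nor can the epimorphism be produced from $2$-morphisms of $\csym{D}$, since $e_lAe\cdot eAe_a\subsetneq e_lAe_a$ can occur; establishing the description amounts to exactly the explicit computation of $\mathbf{C}_{\mathcal{L}}$ your argument was meant to bypass. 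The same object-versus-morphism conflation recurs when you pass from ``action matrices coincide on the nose'' to the natural isomorphisms $\mathbf{C}_{\mathcal{L}}\mathrm{F}_{kl_0}\simeq Qf_k\otimes_{\Bbbk}f_{l_0}Q\otimes_Q-$ demanded by Lemma \ref{StdArg}.

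Two repairs are available. Either do the explicit computation of $\mathbf{C}_{\mathcal{L}}$ (endomorphism algebra and defining ideal) as the paper does — at which point the restriction detour becomes unnecessary and one can simply restrict the equivalence of \cite[Proposition~9]{MM5}; or replace your simple-transitivity claim by the weaker, provable statements later used for Theorem \ref{MainResult}: transitivity of the restriction (Proposition \ref{TransitiveRestriction}), rank and action-matrix control via its simple transitive \emph{quotient} (Corollary \ref{RestrictionMatricesRank}), projectivity of the diagonal action (Lemma \ref{DiagonalActionProjective}), Lemma \ref{MZ2Mimic}, and the Cartan inequality \eqref{EntryWiseIneq}, all applied to $\mathbf{M}=\mathbf{C}_{\mathcal{L}}$. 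None of these depends on the present proposition, so there is no circularity; but as written, your proposal's crucial step is unjustified, and the justification offered is false as stated.
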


\begin{proof}
 
 Equivalent $2$-representations have the same apex, so we only need to prove that cell $2$-representations with the same apex are equivalent.
  From Propositions \ref{SidedPreservation}, \ref{VacuousCellsExist} we conclude that the apex of a cell $2$-represenation not associated to the minimal left cell $\setj{[\mathbb{1}_{\mathtt{i}}]}$ is $\mathcal{J}_{1}$. 
  Indeed, if $F_{kl} \in \mathcal{L}$, then $\mathrm{F}_{kk} \circ \mathrm{F}_{kl} \neq 0$ shows that the apex of $\mathbf{C}_{\mathcal{L}}$ is $\mathcal{J}_{1}$. This shows that $\mathcal{J}_{0}$ is the unique left cell whose cell $2$-representation has $\mathcal{J}_{0}$ as apex. It thus suffices to prove the claim for $\mathcal{L},\mathcal{L}' \neq \mathcal{J}_{0}$. 
  In that case, there are $j,j' \in \mathtt{N}_{R}(\csym{D})$ such that $\mathcal{L} = \mathcal{S}(U \times \setj{j})$ and $\mathcal{L}' = \mathcal{S}(U \times \setj{j}')$.
  
  Recall that $\mathbf{C}_{\mathcal{L}} = \mathbf{N}_{\mathcal{L}}/\mathbf{I}_{\mathcal{L}}$, where the target category of $\mathbf{N}_{\mathcal{L}}$ is $\on{add}\mathcal{S}(U \times \setj{j})$. 
  Consider the left cell $\overline{\mathcal{L}} = \mathcal{S}([m] \times \setj{j})$ of $\csym{C}_{A}$. 
  Similarly, we have $\mathbf{C}_{\overline{\mathcal{L}}} = \mathbf{N}_{\overline{\mathcal{L}}}/\mathbf{I}_{\overline{\mathcal{L}}}$, with $\mathbf{N}_{\overline{\mathcal{L}}}(\mathtt{i}) = \on{add}\mathcal{S}([m] \times \setj{j})$. 
  Let $e_{U} = \sum_{i\in U}e_{i}$. By the definition of $\mathrm{F}_{U \times \setj{j}}$, there is a canonical algebra isomorphism $\on{End}_{\ccf{D}(\mathtt{i,i})}(\mathrm{F}_{U\times \setj{j}}) \simeq e_{U}Ae_{U} \otimes_{\Bbbk} e_{j}Ae_{j}$. 
  One may easily verify that $\mathbf{I}_{\mathcal{L}}$ is determined on the level of indecomposable objects by the ideal $e_{U}A e_{U} \otimes_{\Bbbk} e_{j}(\on{Rad}A)e_{j}$. Similarly, $\on{End}_{\ccf{C}_{A}(\mathtt{i,i})}(\mathrm{F}_{[m]\times \setj{j}}) \simeq A \otimes_{\Bbbk} e_{j}Ae_{j}$ and $\mathbf{I}_{\overline{\mathcal{L}}}$ is determined by the ideal $A \otimes_{\Bbbk} e_{j}(\on{Rad}A)e_{j}$. This observation is used in \cite[Proposition~9]{MM5}.
  
  The inclusion of $\mathrm{F}_{U \times \setj{j}}$ in $\mathrm{F}_{[m] \times \setj{j}}$ corresponds to the canonical inclusion of $e_{U}Ae_{U} \otimes_{\Bbbk} e_{j}Ae_{j}$ in $A \otimes_{\Bbbk} e_{j}Ae_{j}$. It follows that $\mathbf{C}_{\mathcal{L}}$ is a $2$-subrepresentation of the restriction of $\mathbf{C}_{\overline{\mathcal{L}}}$ to a $2$-representation of $\csym{D}$. 
  
  From \cite[Proposition~9]{MM5}, we know that the functor $\mathbf{C}_{\overline{\mathcal{L}}}(\mathtt{i}) \rightarrow \mathbf{C}_{\overline{\mathcal{L}'}}(\mathtt{i})$, given by sending $F_{ij}$ to $F_{ij'}$ on the level of objects, and on the level of morphisms corresponding to the map
  \[
   (A \otimes_{\Bbbk} e_{j}Ae_{j})/(A \otimes_{\Bbbk} e_{j}(\on{Rad}A)e_{j}) \simeq A \xrightarrow{\on{id}_{A}} A \simeq (A \otimes_{\Bbbk} e_{j'}Ae_{j'})/(A \otimes_{\Bbbk} e_{j'}(\on{Rad}A)e_{j'}),
  \]
  gives an equivalence $\mathbf{C}_{\overline{\mathcal{L}}} \xrightarrow{\sim} \mathbf{C}_{\overline{\mathcal{L}'}}$.
  Using the commutativity of
  \[
   \begin{tikzcd}[sep = small]
    (A \otimes_{\Bbbk} e_{j}Ae_{j})/(A \otimes_{\Bbbk} e_{j}(\on{Rad}A)e_{j}) \arrow[r, "\sim"] & A \\
    (e_{U}Ae_{U} \otimes_{\Bbbk} e_{j}Ae_{j})/(e_{U}Ae_{U} \otimes_{\Bbbk} e_{j}(\on{Rad}A)e_{j}) \arrow[r, "\sim"] \arrow[u, hook] & e_{U}Ae_{U} \arrow[u, hook]
   \end{tikzcd},
  \]
  we conclude that the equivalence $\mathbf{C}_{\overline{\mathcal{L}}} \xrightarrow{\sim} \mathbf{C}_{\overline{\mathcal{L}'}}$ restricts to an equivalence $\mathbf{C}_{\mathcal{L}} \simeq \mathbf{C}_{\mathcal{L}'}$, from which the result follows.
\end{proof}

\subsection{\texorpdfstring{$2$-representations of fiat $U$-diagonal $2$-subcategories of $\csym{C}_{\!A}$}{Simple transitive 2-representations of weakly fiat U-diagonal 2-subcategories of C(A)}}

Recall that, by Proposition \ref{DiagonalWeaklyFiat}, the $2$-category $\csym{D}_{U \times U}$ is weakly fiat. Let $\csym{D}$ be a combinatorial $2$-subcategory of $\csym{C}_{\!A}$ containing $\csym{D}_{U\times U}$. Let $\mathbf{M}$ be a simple transitive $2$-representation of $\csym{D}$. 
The following is an immediate consequence of \cite[Theorem~3.1]{Zi}:
\begin{lemma}\label{DiagonalActionProjective}
  Given a $1$-morphism $\mathrm{F} \in \on{Ob}\csym{D}_{U\times U}(\mathtt{i,i})$, the functor $\overline{\mathbf{M}}\mathrm{F}$ is a projective functor.
\end{lemma}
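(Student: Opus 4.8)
The plan is to deduce the statement from \cite[Theorem~3.1]{Zi}, whose hypothesis is the presence of adjoints for the $1$-morphism in question inside a simple transitive $2$-representation. The key point is that, although $\csym{D}$ itself fails to be weakly fiat, the $1$-morphisms coming from the weakly fiat subcategory $\csym{D}_{U\times U}$ retain their adjoints when regarded as $1$-morphisms of the ambient $\csym{D}$.

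First I would reduce to an indecomposable $\mathrm{F}$. Since $\overline{\mathbf{M}}$ is additive, the objects of $\csym{D}_{U\times U}(\mathtt{i,i})$ are the direct sums of copies of $\mathbb{1}_{\mathtt{i}}$ and of the $\mathrm{F}_{ij}$ with $i,j \in U$, and a finite direct sum of projective functors is again a projective functor, it suffices to treat $\mathrm{F} = \mathbb{1}_{\mathtt{i}}$ — where $\overline{\mathbf{M}}\mathbb{1}_{\mathtt{i}}$ is the identity functor, which is projective in the sense of the present terminology — and $\mathrm{F} = \mathrm{F}_{ij}$ with $i,j \in U$.

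Next I would exhibit the adjoints. As $U$ is a self-injective core, Proposition~\ref{DiagonalWeaklyFiat} shows that $\csym{D}_{U\times U}$ is weakly fiat; explicitly, choosing $k \in U$ with $(e_{j}A)^{\ast} \simeq Ae_{k}$, Lemma~\ref{AdjSelf} gives that $\mathrm{F}_{ki}$ is right adjoint to $\mathrm{F}_{ij}$, and dually $\mathrm{F}_{ij}$ admits a left adjoint in $\csym{D}_{U\times U}$. Since $\csym{D}$ contains $\csym{D}_{U\times U}$, both of these adjoints are $1$-morphisms of $\csym{D}$, so $\mathrm{F}_{ij}$ possesses a left and a right adjoint in $\csym{D}$. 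Applying $\overline{\mathbf{M}}$, and using that $\overline{\mathbf{M}}$ sends a $1$-morphism to a right exact functor and an adjunction of $1$-morphisms to an adjunction of functors, I conclude that $\overline{\mathbf{M}}\mathrm{F}_{ij}$ has both a left and a right adjoint, and in particular is exact.

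With the adjunctions in place, I would invoke \cite[Theorem~3.1]{Zi} to upgrade exactness to the assertion that $\overline{\mathbf{M}}\mathrm{F}_{ij}$ is a projective functor, using that $\mathbf{M}$ is simple transitive. The main obstacle is matching the hypotheses of the cited result: it is naturally phrased for a simple transitive $2$-representation of a weakly fiat $2$-category, whereas our $\mathbf{M}$ is simple transitive only over the non-weakly-fiat $\csym{D}$. I expect to resolve this by checking that the argument of \cite[Theorem~3.1]{Zi} uses nothing beyond the two adjunctions attached to the single $1$-morphism $\mathrm{F}_{ij}$ — which we have just supplied — together with the absence of proper $\csym{D}$-stable ideals in $\mathbf{M}$. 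Should the cited statement be genuinely confined to the weakly fiat setting, the fallback is to restrict $\mathbf{M}$ along the inclusion $\csym{D}_{U\times U} \hookrightarrow \csym{D}$ and argue on the transitive constituents of the restriction, noting that the property of $\overline{\mathbf{M}}\mathrm{F}_{ij}$ being projective is detected on the underlying abelian category $\overline{\mathbf{M}}(\mathtt{i})$ and is therefore insensitive to this change of ambient $2$-category.
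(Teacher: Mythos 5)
Your primary route is precisely the paper's proof: the paper deduces the lemma immediately from \cite[Theorem~3.1]{Zi}, whose hypothesis is indeed per-$1$-morphism (both adjoints for the given $1$-morphism inside the ambient finitary $2$-category, with $\mathbf{M}$ simple transitive over that ambient $2$-category, weak fiatness of the whole $2$-category not being required), and the paper supplies those adjoints exactly as you do, via Corollary~\ref{DiagonalWeaklyFiat} and the inclusion $\csym{D}_{U\times U} \subseteq \csym{D}$. Your fallback is therefore moot, which is fortunate, since as stated it would not work: $\on{Res}_{\ccf{D}_{U\times U}}^{\ccf{D}}(\mathbf{M})$ need not be simple transitive (Proposition~\ref{TransitiveRestriction} only yields transitivity), and projectivity of the endofunctor $\overline{\mathbf{M}}\mathrm{F}_{ij}$ of $\overline{\mathbf{M}}(\mathtt{i})$ is not detected on the transitive subquotients of the restriction.
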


Recall from \cite[Section~4.8]{MM1} that 
 a $J$-cell of a finitary $2$-category is called {\it strongly regular} if
 \begin{itemize}
  \item any two left (respectively right) cells in $\mathcal{J}$ are not comparable with respect to the left (respectively right) order;
  \item the intersection between any right and any left cell in $\mathcal{J}$ is a singleton.
 \end{itemize}
 
From the results of the preceding section, it is clear that all the $J$-cells of $\csym{D}$ are strongly regular.
 Combining that with Proposition \ref{DiagonalWeaklyFiat}, we find that $\csym{D}_{U\times U}$ is weakly fiat with strongly regular $J$-cells. The following is then an immediate consequence of \cite[Theorem~33]{MM6}:
\begin{proposition}\label{StronglyRegularUDiagonal}
 Any simple transitive $2$-representation of $\csym{D}_{U\times U}$ is equivalent to a cell $2$-representation.
\end{proposition}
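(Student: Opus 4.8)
The plan is to recognize the statement as a direct application of the general classification theorem \cite[Theorem~33]{MM6}, whose hypotheses are that the ambient $2$-category be weakly fiat and that the relevant $J$-cells be strongly regular. Accordingly, I would structure the argument as the verification of these two hypotheses for $\csym{D}_{U\times U}$, followed by an invocation of the cited result. The first hypothesis is already at hand: $\csym{D}_{U\times U}$ is weakly fiat by Proposition \ref{DiagonalWeaklyFiat}, which applies precisely because $U$ is assumed to be a self-injective core, so no further work is needed there.

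For the second hypothesis I would check that every $J$-cell of $\csym{D}_{U\times U}$ is strongly regular. The minimal cell $\mathcal{J}_{0} = \setj{[\mathbb{1}_{\mathtt{i}}]}$ is a singleton and hence trivially strongly regular. For the nontrivial cell $\mathcal{J}_{1}$, consisting of the classes $[\mathrm{F}_{ij}]$ with $i,j \in U$, the decisive input is Proposition \ref{SidedPreservation}(c): the left, right and two-sided preorders of $\csym{D}_{U\times U}$ all coincide with the restrictions of the corresponding preorders of $\csym{C}_{\!A}$. Consequently, the cell structure inside $\mathcal{J}_{1}$ is merely the restriction of the eggbox structure of $\csym{C}_{\!A}$ governed by \eqref{CAComposition}: the left cells are the ``columns'' $\mathcal{S}(U \times \setj{j})$ for $j \in U$, the right cells are the ``rows'' $\mathcal{S}(\setj{i} \times U)$ for $i \in U$, distinct left (respectively right) cells remain incomparable, and each left--right intersection $\mathcal{S}(\setj{i}\times\setj{j}) = \setj{[\mathrm{F}_{ij}]}$ is a singleton. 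Both defining conditions of strong regularity are thereby met. I would also note in passing that $\csym{D}_{U\times U}$ has no vacuous cells, since for every $j \in U$ one has $e_{j}Ae_{j} \neq 0$, so by Proposition \ref{VacuousCellsExist} the only idempotent $J$-cells are $\mathcal{J}_{0}$ and $\mathcal{J}_{1}$.

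Finally, I would combine these observations: any simple transitive $2$-representation of $\csym{D}_{U\times U}$ has an apex, and this apex is necessarily one of the two idempotent $J$-cells above, each of which is strongly regular. Hence \cite[Theorem~33]{MM6} applies and yields that the $2$-representation is equivalent to a cell $2$-representation, completing the argument.

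The argument carries no genuine computational obstacle; the entire content is packaged into the cited theorem. The one point that genuinely requires care, and which I would present as the crux of the verification, is the claim that the restricted cell structure still satisfies strong regularity. This is exactly where the coincidence of preorders in Proposition \ref{SidedPreservation}(c) is indispensable: without it the restricted left or right cells could in principle merge or become comparable upon passage to the subcategory, which would invalidate the hypothesis of \cite[Theorem~33]{MM6}. Everything else is bookkeeping around the eggbox description of $\csym{C}_{\!A}$.
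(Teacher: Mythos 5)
Your proposal is correct and follows exactly the paper's route: the paper likewise observes that $\csym{D}_{U\times U}$ is weakly fiat by Proposition \ref{DiagonalWeaklyFiat}, that all its $J$-cells are strongly regular by the cell-structure analysis of the preceding section (in particular Proposition \ref{SidedPreservation}), and then invokes \cite[Theorem~33]{MM6}. Your verification is in fact somewhat more explicit than the paper's, which leaves the strong-regularity check as ``clear from the preceding section,'' but the content is identical.
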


For the remainder of this document, let $\mathcal{L}_{0}$ be the left cell of $\csym{D}_{U \times U}$ consisting of $[\mathbb{1}_{\mathtt{i}}]$ and let $\mathcal{L}_{1}$ be a left cell contained in $\mathcal{J}_{1}^{\ccf{D}_{U\times U}}$.
Since any left cell of $\csym{D}_{U \times U}$ is also a left cell of $\csym{D}$, both $\mathcal{L}_{0}$ and $\mathcal{L}_{1}$ give left cells of both $\csym{D}$ and $\csym{D}_{U \times U}$. We denote the associated $2$-representations of $\csym{D}$ by $\mathbf{C}_{\mathcal{L}_{0}}, \mathbf{C}_{\mathcal{L}_{1}}$, and the associated $2$-representations of $\csym{D}_{U\times U}$ by $\mathbf{C}_{\mathcal{L}_{0}}^{U}, \mathbf{C}_{\mathcal{L}_{1}}^{U}$.

\begin{proposition}\label{DExtUDiagonal}
We have:
 \begin{enumerate}
  \item $\on{Dext}(\mathbf{C}_{\mathcal{L}_{0}}^{U}, \mathbf{C}_{\mathcal{L}_{0}}^{U}) = \varnothing$
  \item $\on{Dext}(\mathbf{C}_{\mathcal{L}_{1}}^{U}, \mathbf{C}_{\mathcal{L}_{1}}^{U}) = \varnothing$
  \item $\on{Dext}(\mathbf{C}_{\mathcal{L}_{1}}^{U}, \mathbf{C}_{\mathcal{L}_{0}}^{U}) = \varnothing$.
 \end{enumerate}
\end{proposition}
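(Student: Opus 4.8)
The plan is to reduce the triviality of a short exact sequence to a $\Hom$-vanishing statement that can be controlled through the adjunctions provided by Corollary \ref{DiagonalWeaklyFiat}, and then to split into the three cases according to the apexes of the sub- and quotient-representations. First I would fix a short exact sequence $0 \rightarrow \mathbf{K} \rightarrow \mathbf{M} \rightarrow \mathbf{N} \rightarrow 0$ of $2$-representations of $\csym{D}_{U\times U}$ realizing the relevant pair, write $\setj{Y_{1},\ldots,Y_{s}}$ for the indecomposable objects of $\mathbf{K}(\mathtt{i})$ and $\setj{X_{1},\ldots,X_{r}}$ for the indecomposables of $\mathbf{M}(\mathtt{i})$ surviving in $\mathbf{N}$, and recall that the sequence is trivial precisely when no $Y_{l}$ is a direct summand of $\mathbf{M}\mathrm{F}X_{k}$ for any $1$-morphism $\mathrm{F}$. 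Since $\csym{D}_{U\times U}$ is weakly fiat, every $\mathrm{F}$ admits a right adjoint $\mathrm{F}^{\star}$ inside $\csym{D}_{U\times U}$, and the adjunction yields $\Hom_{\mathbf{M}(\mathtt{i})}(\mathbf{M}\mathrm{F}X_{k}, Y_{l}) \simeq \Hom_{\mathbf{M}(\mathtt{i})}(X_{k}, \mathbf{M}\mathrm{F}^{\star}Y_{l})$. Because $\mathbf{K}$ is a $2$-subrepresentation and $\mathrm{F}^{\star}$ lies in $\csym{D}_{U\times U}$, the object $\mathbf{M}\mathrm{F}^{\star}Y_{l}$ equals $\mathbf{K}\mathrm{F}^{\star}Y_{l}$ and hence lies in $\mathbf{K}(\mathtt{i})$. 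A summand $Y_{l}$ of $\mathbf{M}\mathrm{F}X_{k}$ forces the left-hand space to be non-zero, so in each case it suffices to show that the right-hand space vanishes.

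For parts (1) and (3) the sub-representation is $\mathbf{C}_{\mathcal{L}_{0}}^{U}$, whose apex is $\mathcal{J}_{0}$, so every non-identity indecomposable $1$-morphism (all of which lie in $\mathcal{J}_{1}$) annihilates $\mathbf{K}$. For a non-identity $\mathrm{F} = \mathrm{F}_{ab}$ the right adjoint $\mathrm{F}^{\star}$ is again a non-identity $1$-morphism of $\mathcal{J}_{1}$, whence $\mathbf{K}\mathrm{F}^{\star}Y_{l} = 0$ and the $\Hom$-space above vanishes; for $\mathrm{F} = \mathbb{1}_{\mathtt{i}}$ one has $\mathbf{M}\mathbb{1}_{\mathtt{i}}X_{k} = X_{k}$, which is indecomposable and distinct from every $Y_{l}$. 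Hence every such short exact sequence is trivial, giving both (1) and (3). Note the asymmetry: this argument never applies when $\mathbf{C}_{\mathcal{L}_{1}}^{U}$ is the sub-representation, which is exactly why the fourth conceivable $\on{Dext}$ group is not asserted to vanish.

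The remaining case (2), with $\mathbf{K} = \mathbf{N} = \mathbf{C}_{\mathcal{L}_{1}}^{U}$, is the main obstacle: here the apex is $\mathcal{J}_{1}$, so $\mathbf{K}\mathrm{F}^{\star}Y_{l}$ is in general non-zero, and the adjunction reduction only brings us to showing $\Hom_{\mathbf{M}(\mathtt{i})}(X_{k}, Z) = 0$ for $X_{k}$ an indecomposable lift of an $\mathbf{N}$-object and $Z$ a (non-zero) object of $\mathbf{K}(\mathtt{i})$. To close this I would exploit that $\csym{D}_{U\times U}$ is weakly fiat with strongly regular cells and that its unique non-trivial cell $2$-representation is the one attached to the centralizer $e_{U}Ae_{U}$, where $e_{U} = \sum_{i \in U} e_{i}$, which we have shown to be self-injective; in fact $\csym{D}_{U\times U}$ is readily seen to be biequivalent to $\csym{C}_{e_{U}Ae_{U}}$, so the claim becomes the vanishing of discrete self-extensions of the large cell $2$-representation of $\csym{C}_{B}$ for the self-injective algebra $B = e_{U}Ae_{U}$. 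I expect to obtain this either by invoking the general vanishing results for cell $2$-representations of weakly fiat $2$-categories from \cite{CM}, or, for a self-contained argument, by recording that the off-diagonal blocks $B_{\mathrm{F}}$ of the action matrices of $\mathbf{M}$ form a Hochschild-type cocycle satisfying $[\mathrm{F}]_{\mathbf{K}}B_{\mathrm{G}} + B_{\mathrm{F}}[\mathrm{G}]_{\mathbf{N}} = B_{\mathrm{F}\circ\mathrm{G}}$, and showing, using the non-degeneracy coming from self-injectivity of $B$ together with the equality of Cartan matrices for $\mathbf{C}_{\mathcal{L}_{1}}^{U}$ and $B$, that every such cocycle must vanish. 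The crux of the whole proposition is precisely this self-extension computation; parts (1) and (3) are then formal consequences of weak fiatness together with the apex of $\mathbf{C}_{\mathcal{L}_{0}}^{U}$ being $\mathcal{J}_{0}$.
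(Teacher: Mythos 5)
Your parts (1) and (3) are correct and complete: since $U$ is a self-injective core, Corollary \ref{DiagonalWeaklyFiat} puts a right adjoint of each $\mathrm{F}_{ij}$ inside $\csym{D}_{U\times U}$, and your adjunction-plus-apex argument (a summand $Y_{l}$ of $\mathbf{M}\mathrm{F}X_{k}$ would force $0 \neq \Hom(X_{k},\mathbf{M}\mathrm{F}^{\star}Y_{l}) = \Hom(X_{k},0)$, because the subrepresentation $\mathbf{C}_{\mathcal{L}_{0}}^{U}$ annihilates $\mathcal{J}_{1}$) is sound, and in fact more self-contained than the paper, whose entire proof is a citation: it observes that the proof of \cite[Theorem~6.22]{CM} goes through verbatim after restricting the index set from $\rr{n}$ to $U$ and replacing self-injectivity of $A$ by the self-injective core condition. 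However, part (2) --- which you correctly identify as the crux, since for equal apexes the formal adjunction step only produces a non-vanishing $\Hom$-space into $\mathbf{K}(\mathtt{i})$, which is no contradiction --- is not proven in your write-up, and both of your proposed routes have concrete defects.

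For route (a): \cite{CM} contains no general vanishing theorem for cell $2$-representations of weakly fiat $2$-categories that applies off the shelf; the relevant statement is exactly Theorem~6.22, which is specific to $\csym{C}_{A}$ with $A$ self-injective, so invoking it requires either carrying out the adaptation (which \emph{is} the paper's proof) or your proposed reduction --- and the claim that $\csym{D}_{U\times U}$ is ``readily seen to be biequivalent to $\csym{C}_{e_{U}Ae_{U}}$'' is false as stated. Since $\csym{D}_{U\times U}$ is $2$-full in $\csym{C}_{A}$, one has $\End(\mathbb{1}_{\mathtt{i}}) = Z(A)$ there, whereas in $\csym{C}_{e_{U}Ae_{U}}$ it is $Z(e_{U}Ae_{U})$, and these differ in general; similarly $\Hom(\mathbb{1}_{\mathtt{i}},\mathrm{F}_{ij})$ is a centralizer condition over $A$ versus over $e_{U}Ae_{U}$. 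At best one can hope for a comparison with a $2$-category of type $\csym{C}_{B,X}$ as in Section \ref{CAX}, and that needs an argument you have not given. For route (b): your cocycle identity $B_{\mathrm{F}\circ\mathrm{G}} = [\mathrm{F}]_{\mathbf{K}}B_{\mathrm{G}} + B_{\mathrm{F}}[\mathrm{G}]_{\mathbf{N}}$ is correct, but the vanishing of all such cocycles is asserted, not proven, and the advertised ``non-degeneracy coming from self-injectivity'' is not available: the natural numerical mechanism is the weak-fiatness relation $[\mathrm{F}]^{T}C = C[\mathrm{F}^{\star}]$ for the Cartan matrix $C$ of $\mathbf{M}(\mathtt{i})$, whose $(Y,X)$-block reads $C_{YY}B_{\mathrm{F}^{\star}} = [\mathrm{F}]_{\mathbf{K}}^{T}C_{YX} - C_{YX}[\mathrm{F}^{\star}]_{\mathbf{N}}$, so it only controls $C_{YY}B_{\mathrm{F}^{\star}}$, and $C_{YY}$ --- the Cartan matrix of $e_{U}Ae_{U}$ --- can be singular even for self-injective algebras, e.g.\ for $\Bbbk(1\rightleftarrows 2)/\on{rad}^{2}$ with Cartan matrix $\left(\begin{smallmatrix}1&1\\1&1\end{smallmatrix}\right)$. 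The argument in \cite{CM} accordingly uses structural information about the action (bimodule, i.e.\ projective-functor, realizations of the $\overline{\mathbf{M}}\mathrm{F}_{ij}$), not just action matrices. So part (2) is a genuine gap; to close it you should do what the paper does: rerun the proof of \cite[Theorem~6.22]{CM} with index set $U$, checking at each step that the self-injective core hypothesis supplies the properties that self-injectivity of $A$ provides there.
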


\begin{proof}
 The above statement is a minor modification of \cite[Theorem~6.22]{CM}, and the proof therein requires only two changes for our case. 
 First, we  restrict the index set, from $\rr{n}$ in \cite{CM}, to $U$ in our case. 
 Second, rather than assume that $A$ is self-injective, we only assume that $U$ is a self-injective core. Both the assumptions imply the properties which are used in the proof given in \cite{CM}. 
\end{proof}

Proposition \ref{DExtUDiagonal} immediately implies the following statement:

\begin{corollary}\label{UDiagonalMatrices}
  Let $\mathbf{M}$ be a finitary $2$-representation of $\csym{D}_{U \times U}$. There is a labelling of the indecomposable objects of $\mathbf{M}(\mathtt{i})$, with respect to which we have
\begin{displaymath}
[\mathrm{F}_{U\times U}]_{\mathbf{M}} = 
\left(
\begin{array}{c|c}
 \left(
 \begin{array}{c|c|c}
  [\mathrm{F}_{U\times U}]_{\mathbf{C}_{\mathcal{L}_{1}}^{U}} & 0 & 0 \\
  \hline
  0 & \ddots & 0 \\
  \hline
  0 & 0 & [\mathrm{F}_{U\times U}]_{\mathbf{C}_{\mathcal{L}_{1}}^{U}}
 \end{array}
 \right)
 & * \\
 \hline
 0 & 0
\end{array}
\right)
\end{displaymath}
Each diagonal block of the form $[\mathrm{F}_{U\times U}]_{\mathbf{C}_{\mathcal{L}_{1}}^{U}}$ corresponds to a simple transitive subquotient equivalent to $\mathbf{C}_{\mathcal{L}_{1}}^{U}$ in the weak Jordan-H{\" o}lder series of $\mathbf{M}$, and the bottom right diagonal block $0$ corresponds to the subquotients equivalent to $\mathbf{C}_{\mathcal{L}_{0}}^{U}$.
\end{corollary}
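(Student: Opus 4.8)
The plan is to read off the claimed block form from a suitably chosen weak Jordan--H\"older series of $\mathbf{M}$. First I would recall that, by Proposition \ref{StronglyRegularUDiagonal}, every simple transitive subquotient occurring in such a series is equivalent to a cell $2$-representation of $\csym{D}_{U\times U}$, and by Proposition \ref{Cell2Reps} (together with the fact that $\csym{D}_{U\times U}$ has exactly the two idempotent $J$-cells $\mathcal{J}_0,\mathcal{J}_1$) each such subquotient is equivalent to either $\mathbf{C}_{\mathcal{L}_0}^U$ or $\mathbf{C}_{\mathcal{L}_1}^U$. I would then record the two diagonal contributions: since the apex of $\mathbf{C}_{\mathcal{L}_0}^U$ is $\mathcal{J}_0$ while $\mathrm{F}_{U\times U}$ lies in $\mathcal{J}_1 >_J \mathcal{J}_0$, the functor $\mathbf{M}\mathrm{F}_{U\times U}$ annihilates every $\mathcal{L}_0$-subquotient, so $[\mathrm{F}_{U\times U}]_{\mathbf{C}_{\mathcal{L}_0}^U} = 0$, whereas each $\mathcal{L}_1$-subquotient contributes the block $[\mathrm{F}_{U\times U}]_{\mathbf{C}_{\mathcal{L}_1}^U}$.

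Next I would use Proposition \ref{DExtUDiagonal} to rearrange the series. Fix a weak Jordan--H\"older filtration $0 = \mathbf{M}_0 \subset \cdots \subset \mathbf{M}_t = \mathbf{M}$. Whenever an $\mathcal{L}_0$-subquotient sits immediately below an $\mathcal{L}_1$-subquotient, the corresponding two-step short exact sequence has the form $0 \rightarrow \mathbf{C}_{\mathcal{L}_0}^U \rightarrow E \rightarrow \mathbf{C}_{\mathcal{L}_1}^U \rightarrow 0$; since $\on{Dext}(\mathbf{C}_{\mathcal{L}_1}^U, \mathbf{C}_{\mathcal{L}_0}^U) = \varnothing$ this sequence is trivial, so the two-step piece splits and the two subquotients may be transposed. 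Iterating this bubbling procedure, I would sort the filtration so that all $\mathcal{L}_1$-subquotients form a bottom segment, i.e.\ a $2$-subrepresentation $\mathbf{K}$, and all $\mathcal{L}_0$-subquotients form the complementary top segment. Listing the indecomposables of $\mathbf{M}(\mathtt{i})$ accordingly --- $\mathcal{L}_1$-objects first, $\mathcal{L}_0$-objects last --- the block of $[\mathrm{F}_{U\times U}]_{\mathbf{M}}$ in the $\mathcal{L}_0$-rows and $\mathcal{L}_1$-columns vanishes because $\mathbf{K}$ is a subrepresentation, giving the zero lower-left block; the upper-right block is left uncontrolled, as there is no hypothesis on $\on{Dext}(\mathbf{C}_{\mathcal{L}_0}^U, \mathbf{C}_{\mathcal{L}_1}^U)$, and becomes the entry $*$.

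Finally I would diagonalize within each segment. The $\mathcal{L}_1$-segment is itself a finitary $2$-representation all of whose subquotients are equivalent to $\mathbf{C}_{\mathcal{L}_1}^U$; since $\on{Dext}(\mathbf{C}_{\mathcal{L}_1}^U, \mathbf{C}_{\mathcal{L}_1}^U) = \varnothing$, every consecutive extension splits, so its action matrix is block diagonal with each block equal to $[\mathrm{F}_{U\times U}]_{\mathbf{C}_{\mathcal{L}_1}^U}$, yielding the top-left block. Likewise $\on{Dext}(\mathbf{C}_{\mathcal{L}_0}^U, \mathbf{C}_{\mathcal{L}_0}^U) = \varnothing$ splits the $\mathcal{L}_0$-segment, and since each of its diagonal blocks is the zero matrix $[\mathrm{F}_{U\times U}]_{\mathbf{C}_{\mathcal{L}_0}^U} = 0$, the entire bottom-right block is zero. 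This produces exactly the displayed matrix, with the diagonal and bottom-right blocks interpreted through the weak Jordan--H\"older subquotients as claimed. The main obstacle is the rearrangement step: one must verify, within the precise framework of \cite{CM}, that the vanishing of $\on{Dext}$ genuinely permits transposing adjacent subquotients and that the local splittings assemble into the asserted global block decomposition. This is where the content of Proposition \ref{DExtUDiagonal} is actually consumed, the remainder being bookkeeping about apexes and the subrepresentation property.
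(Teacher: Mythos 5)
Your proposal is correct and follows essentially the same route as the paper: the paper gives no written proof beyond asserting that Proposition \ref{DExtUDiagonal} ``immediately implies'' the corollary, and your expansion --- identifying every weak Jordan--H{\"o}lder subquotient with $\mathbf{C}_{\mathcal{L}_0}^{U}$ or $\mathbf{C}_{\mathcal{L}_1}^{U}$ via Propositions \ref{StronglyRegularUDiagonal} and \ref{Cell2Reps}, sorting the filtration by transpositions licensed by the three vanishing $\on{Dext}$'s, and recognizing the possibly non-trivial $\on{Dext}(\mathbf{C}_{\mathcal{L}_0}^{U},\mathbf{C}_{\mathcal{L}_1}^{U})$ as the block $*$ --- is exactly the standard argument the paper leaves implicit. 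The one compression, splitting only adjacent pairs within each segment, is harmless, since the same transposition step you describe reduces any non-adjacent pair of subquotients to an adjacent one.
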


\section{The main result and its generalizations}
\label{s4}

\subsection{The main result}
Let $\zigzag(S_{k})$ be the star algebra defined in Example \ref{StarAlgebras}. The set $\setj{0}$ defines a self-injective core for $\zigzag(S_{k})$. Consider the $\setj{0}$-superdiagonal $2$-subcategory of $\csym{C}_{\!\zigzag(S_{k})}$ given by $\csym{D}_{\setj{0} \times \rr{0,k}}$. The main goal of this section is to generalize the following result:
\begin{theorem}\cite[Theorem~6.2]{Zi}
  Any simple transitive $2$-representation of the $2$-category $\csym{D}_{\setj{0}\times \rr{0,k}}$ is equivalent to a cell $2$-representation.
\end{theorem}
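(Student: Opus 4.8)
The plan is to reduce everything to the ``standard argument'' of Lemma~\ref{StdArg}, after two preparatory steps: a projectivity-propagation argument and a computation of the rank. Write $A = \zigzag(S_{k})$ and $\csym{D} = \csym{D}_{\setj{0}\times\rr{0,k}}$; since $A$ is weakly symmetric, $U = \setj{0}$ is a self-injective core and $\csym{D}$ is $U$-superdiagonal, so all earlier results apply. I first record the composition numbers $\dim e_{0}Ae_{0} = 2$ and $\dim e_{j}Ae_{0} = 1$ for $j \geq 1$, which by \eqref{CAComposition} give
\[
 \mathrm{F}_{00}\circ \mathrm{F}_{0m} \simeq \mathrm{F}_{0m}^{\oplus 2}, \qquad \mathrm{F}_{0m}\circ \mathrm{F}_{0l} \simeq \mathrm{F}_{0l} \quad (m \geq 1).
\]
By Proposition~\ref{VacuousCellsExist} there are no vacuous cells here, so $\csym{D}$ has exactly the two idempotent $J$-cells $\mathcal{J}_{0} = \setj{[\mathbb{1}_{\mathtt{i}}]}$ and $\mathcal{J}_{1} = \setj{[\mathrm{F}_{0j}] : j \in \rr{0,k}}$, with $\mathcal{J}_{1}$ a single right cell split into the singleton left cells $\mathcal{L}_{j} = \setj{\mathrm{F}_{0j}}$; I single out $\mathcal{L}_{1} := \setj{\mathrm{F}_{00}}$, the left cell containing the diagonal $1$-morphism. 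If $k = 0$ then $\csym{D} = \csym{D}_{U\times U}$ and the claim is Proposition~\ref{StronglyRegularUDiagonal}, so assume $k \geq 1$. Let $\mathbf{M}$ be simple transitive with $\mathbf{M}(\mathtt{i}) \simeq Q\!\on{-proj}$, indecomposables $X_{1},\ldots,X_{n}$, and action matrices $N_{m} := [\mathrm{F}_{0m}]_{\mathbf{M}}$. If the apex of $\mathbf{M}$ is $\mathcal{J}_{0}$, every $\mathrm{F}_{0m}$ annihilates $\mathbf{M}$, so $\mathbf{M} \simeq \mathbf{C}_{\mathcal{L}_{0}}$; hence I assume the apex is $\mathcal{J}_{1}$.

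The first key step is to show that $\overline{\mathbf{M}}\mathrm{F}X$ is projective for \emph{every} $1$-morphism $\mathrm{F}$ of $\csym{D}$ and every object $X$. Since $\mathrm{F}_{00} \in \on{Ob}\csym{D}_{U\times U}(\mathtt{i,i})$, Lemma~\ref{DiagonalActionProjective} makes $\overline{\mathbf{M}}\mathrm{F}_{00}$ a projective functor, hence one sending all objects to projectives. Then $(\overline{\mathbf{M}}\mathrm{F}_{0m}X)^{\oplus 2} \simeq \overline{\mathbf{M}}(\mathrm{F}_{00}\circ \mathrm{F}_{0m})X \simeq \overline{\mathbf{M}}\mathrm{F}_{00}(\overline{\mathbf{M}}\mathrm{F}_{0m}X)$ is projective, so $\overline{\mathbf{M}}\mathrm{F}_{0m}X$ is projective as a summand of a projective. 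Thus $\mathbf{M}$ is recovered as the finitary $2$-representation $\overline{\mathbf{M}}\!\on{-proj}$, exactly as in the proof of Lemma~\ref{StdArg}. This step is precisely where $\dim e_{0}Ae_{0} > 0$ enters: the diagonal morphism dominates every leaf morphism.

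The main difficulty is to prove $n = 1$. Restricting $\mathbf{M}$ to the weakly fiat $\csym{D}_{U\times U}$ and applying Corollary~\ref{UDiagonalMatrices}, there is a labelling in which $N_{0}$ is block upper triangular with $p$ diagonal blocks equal to $[\mathrm{F}_{00}]_{\mathbf{C}_{\mathcal{L}_{1}}^{U}} = (2)$ and a trailing zero block. Set $C := \sum_{m=0}^{k} N_{m}$. Transitivity makes every off-diagonal entry of $C$ positive, and the relations above give $N_{m}N_{l} = (\dim e_{m}Ae_{0})\,N_{l}$, whence $C^{2} = (k+2)C$; together these force the diagonal of $C$ to be positive too, so $C$ is strictly positive. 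By Perron--Frobenius, $C$ has simple Perron value $k+2$ and $\on{rank}C = 1$, and from $CN_{m} = (k+2)N_{m}$ one gets $\on{im}N_{m} \subseteq \on{im}C = \Bbbk v$ for all $m$. Since $N_{0}^{2} = 2N_{0}$ and $N_{0}$ has rank one, $\on{tr}N_{0} = 2$; comparing with the block form, where $\on{tr}N_{0} = 2p$, gives $p = 1$, so $\on{im}N_{0} = \Bbbk e_{1}$ and hence $v = e_{1}$. But then every $N_{m}$, and therefore $C$, is supported on a single row, contradicting strict positivity unless $n = 1$. Thus $\mathbf{M}$ has rank $1$ and its restriction to $\csym{D}_{U\times U}$ is $\mathbf{C}_{\mathcal{L}_{1}}^{U}$.

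Finally I would apply Lemma~\ref{StdArg} to the left cell $\mathcal{L}_{1} = \setj{\mathrm{F}_{00}}$. Choosing this cell, rather than a leaf cell $\mathcal{L}_{j}$ with $j \geq 1$, is essential: $\mathcal{L}_{1}$ contains the diagonal idempotent, so the required index $l$ with $f_{l} \leftrightarrow e_{0}$ is available, whereas for a leaf cell the column index lies outside $U$ and no such $f_{l}$ exists. Since $\mathbf{M}(\mathtt{i}) \simeq \mathbf{C}_{\mathcal{L}_{1}}^{U}(\mathtt{i})$ has endomorphism algebra $e_{0}Ae_{0}$, the Cartan matrix of $\mathbf{M}(\mathtt{i})$ equals that of the cell $2$-representation, namely $(2)$; and the identification $\mathbf{M}|_{\csym{D}_{U\times U}} \simeq \mathbf{C}_{\mathcal{L}_{1}}^{U}$ gives $\mathbf{M}\mathrm{F}_{00} \simeq Q \otimes_{\Bbbk} Q \otimes_{Q} -$, which is the required form $Qf_{1}\otimes_{\Bbbk} f_{1}Q\otimes_{Q} -$. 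Lemma~\ref{StdArg} then yields $\mathbf{M} \simeq \mathbf{C}_{\mathcal{L}_{1}}$, and Proposition~\ref{Cell2Reps} shows $\mathbf{C}_{\mathcal{L}_{1}}$ is equivalent to every $\mathbf{C}_{\mathcal{L}_{j}}$, which exhausts the simple transitive $2$-representations and proves the statement. The delicate points to get right are the Perron--Frobenius plus trace computation forcing rank one, and the choice of the diagonal cell, with the rest being the verification of the two hypotheses of Lemma~\ref{StdArg}.
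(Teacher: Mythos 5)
Your Perron--Frobenius computation forcing $n=1$ is correct and genuinely different from the paper's route: where you use $C^{2}=(k+2)C$, strict positivity of $C$, simplicity of the Perron value and the trace identity $\on{tr}N_{0}=2=2p$ to force rank one, the paper's Proposition \ref{TransitiveRestriction} instead shows that $\on{Res}_{\ccf{D}_{U\times U}}^{\ccf{D}}(\mathbf{M})$ is transitive by playing the block-diagonal shape of $[\mathrm{F}_{\mathcal{S}(\ccf{D})}\circ\mathrm{F}_{U\times U}]_{\mathbf{M}}$ against the strict positivity of the product $[\mathrm{F}_{\mathcal{S}(\ccf{D})}]_{\mathbf{M}}[\mathrm{F}_{U\times U}]_{\mathbf{M}}$; both routes rest on Corollary \ref{UDiagonalMatrices}. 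The genuine gap is in your final paragraph. What your rank argument yields is that the \emph{unique simple transitive quotient} of the restriction is equivalent to $\mathbf{C}_{\mathcal{L}_{1}}^{U}$; you then assert the identification $\mathbf{M}|_{\ccf{D}_{U\times U}}\simeq\mathbf{C}_{\mathcal{L}_{1}}^{U}$ itself. This does not follow: simple transitivity of $\mathbf{M}$ over $\csym{D}$ does not descend to the restriction, since a $\csym{D}_{U\times U}$-stable ideal need not be $\csym{D}$-stable. Passing to the simple transitive quotient can strictly shrink hom-spaces, so all you actually obtain is the inequality $\mathtt{C}^{Q}_{11}=\dim Q\geq\dim e_{0}Ae_{0}=2$ of \eqref{EntryWiseIneq}, not the equality $\dim Q=2$, and nothing yet identifies the functor $\overline{\mathbf{M}}\mathrm{F}_{00}$.

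Concretely, the paper's projective functors include the identity functor, so Lemma \ref{DiagonalActionProjective} together with $N_{0}=(2)$ gives only $\overline{\mathbf{M}}\mathrm{F}_{00}\simeq\on{Id}^{\oplus a}\oplus\left(Q\otimes_{\Bbbk}Q\otimes_{Q}-\right)^{\oplus b}$ with $a+b\dim Q=2$; comparing summands on both sides of $\mathrm{F}_{00}\circ\mathrm{F}_{00}\simeq\mathrm{F}_{00}^{\oplus 2}$ leaves, besides the desired case $a=0$, $b=1$, $\dim Q=2$, the scenario $Q=\Bbbk$ with $\overline{\mathbf{M}}\mathrm{F}_{00}\simeq\on{Id}^{\oplus 2}$ and, via $\mathrm{F}_{0m}\circ\mathrm{F}_{00}\simeq\mathrm{F}_{00}$, also $\overline{\mathbf{M}}\mathrm{F}_{0m}\simeq\on{Id}$ for $m\geq 1$. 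That scenario reproduces exactly your action matrices $N_{0}=(2)$, $N_{m}=(1)$, so none of your matrix arguments excludes it; and your first step (``a projective functor, hence one sending all objects to projectives'') silently assumes $a=0$ for the same reason --- though that step is anyway redundant, as Lemma \ref{StdArg} re-derives projectivity from its own hypotheses. This is precisely where the paper invests its remaining machinery: the entry-wise Cartan inequality \eqref{EntryWiseIneq} rules out $\dim Q=1$, Lemma \ref{MZ2Mimic} (the analogue of \cite[Lemmas~20 and~22]{MZ2}) rules out identity summands and yields $\overline{\mathbf{M}}\mathrm{F}_{00}\simeq\mathrm{G}_{11}^{\oplus m_{00}}$, and the squeeze \eqref{CartanMatrixEqualities} against \eqref{IneqConsequence} then forces $m_{00}=1$ and $\mathtt{C}^{Q}=(2)$, at which point Lemma \ref{StdArg} applies. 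You need these two inputs, or substitutes for them, at the point where you claim the ``identification''; without them the hypotheses of Lemma \ref{StdArg} are not verified.
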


Given a finitary $2$-subcategory $\csym{B}$ of a finitary $2$-category $\csym{C}$, denote by 
\[
\on{Res}_{\ccf{B}}^{\ccf{C}}(-): \csym{C}\!\on{-afmod} \rightarrow \csym{B}\!\on{-afmod}
\]
the restriction $2$-functor given by precomposition with the inclusion $2$-functor from $\csym{B}$ to $\csym{C}$.
Let $\mathbf{M}$ be a finitary $2$-representation of $\csym{C}$. Following \cite[Theorem~8]{MM5}, we may choose a transitive subquotient $\mathbf{N}$ of $\on{Res}_{\ccf{B}}^{\ccf{C}}(\mathbf{M})$. For any $\mathtt{i} \in \on{Ob}\csym{C}$, we may label a complete, irredundant list of representatives of isomorphism classes of indecomposable objects of $\mathbf{M}(\mathtt{i})$ as $X_{1},\ldots, X_{l}$, so that there is $k \leq l$ such that the list $X_{1},\ldots, X_{k}$ of representatives is complete and irredundant for $\mathbf{N}(\mathtt{i})$. Under this labelling, the Cartan matrix $\mathtt{C}^{\mathbf{N}(\mathtt{i})}$ is the diagonal block submatrix of $\mathtt{C}^{\mathbf{M}(\mathtt{i})}$ corresponding to the indices $1,\ldots, k$. 
By \cite[Lemma~4]{MM5}, the $2$-representation $\mathbf{N}$ admits a unique maximal $\csym{B}$-stable ideal $\mathbf{I}$. The quotient $\mathbf{N}/\mathbf{I}$ is simple transitive. The projection $2$-transformation $\mathbf{N} \rightarrow \mathbf{N}/\mathbf{I}$ does not map any of the indecomposable objects to $0$, and so the Cartan matrices are of the same size, and, for $i,j \in \rr{k}$, we have the entry-wise inequality
\begin{equation}\label{EntryWiseIneq}
 \mathtt{C}^{\mathbf{M}(\mathtt{i})}_{ij} = \mathtt{C}^{\mathbf{N}(\mathtt{i})}_{ij} \geq \mathtt{C}^{\mathbf{N}(\mathtt{i})/\mathbf{I}(\mathtt{i})}_{ij} = \mathtt{C}^{(\mathbf{N}/\mathbf{I})(\mathtt{i})}_{ij}.
\end{equation}

For the remainder of this section, we will let $\mathbf{M}$ be a transitive $2$-representation of the $U$-superdiagonal $2$-subcategory $\csym{D}$ of $\csym{C}_{A}$, fixed in Section \ref{s3}. Similarly to \cite[Theorem~6.2]{Zi}, if the apex of $\mathbf{M}$ is $\mathcal{J}_{0}$, our remaining claims, including the main result, follow immediately. We thus assume that the apex of $\mathbf{M}$ is $\mathcal{J}_{1}$.

\begin{proposition}\label{TransitiveRestriction}
 The $2$-representation $\on{Res}_{\ccf{D}_{U\times U}}^{\ccf{D}}(\mathbf{M})$ is transitive.
\end{proposition}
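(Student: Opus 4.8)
The plan is to pull back the block decomposition of the restriction provided by Corollary~\ref{UDiagonalMatrices} and to use transitivity of $\mathbf{M}$ over $\csym{D}$ to collapse it. Since restriction does not change the underlying category $\mathbf{M}(\mathtt{i})$, I fix a complete irredundant list of indecomposable objects and aim to show they constitute a single transitive class under the action of $\csym{D}_{U\times U}$. By Corollary~\ref{UDiagonalMatrices}, after relabelling the matrix $[\mathrm{F}_{U\times U}]_{\mathbf{M}}$ is block upper triangular, with diagonal blocks $[\mathrm{F}_{U\times U}]_{\mathbf{C}_{\mathcal{L}_{1}}^{U}}$ supported on object sets $\mathcal{B}_{1},\ldots,\mathcal{B}_{p}$, and a trailing zero block recording the $\mathbf{C}_{\mathcal{L}_{0}}^{U}$-subquotients; in particular no $\mathcal{L}_{0}$-type indecomposable is ever a summand of $\mathbf{M}(\mathrm{F}_{U\times U})Z$.

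\emph{Step 1 (there is no $\mathcal{L}_{0}$-part).} I would show every indecomposable $X$ is a summand of $\mathbf{M}(\mathrm{F}_{U\times U})Z$ for some $Z$. Since $\mathbf{M}$ is transitive over $\csym{D}$ and the apex is $\mathcal{J}_{1}$, choosing an indecomposable target distinct from $X$ (the rank-one case being immediate) yields a \emph{nontrivial} indecomposable $1$-morphism $\mathrm{F}_{ij}$, with $i\in U$ and $j\in V$, such that $X\mid\mathbf{M}(\mathrm{F}_{ij})Z$. For any $l\in U$ the composition rule \eqref{CAComposition} gives $\mathrm{F}_{il}\circ\mathrm{F}_{lj}\simeq\mathrm{F}_{ij}^{\oplus\dim e_{l}Ae_{l}}$ with $\dim e_{l}Ae_{l}>0$, so $\mathrm{F}_{ij}$ is a summand of $\mathrm{F}_{il}\circ\mathrm{F}_{lj}$, whence $X\mid\mathbf{M}(\mathrm{F}_{il})\big(\mathbf{M}(\mathrm{F}_{lj})Z\big)$. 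As $\mathrm{F}_{il}$ (with $i,l\in U$) is a summand of $\mathrm{F}_{U\times U}$, this proves the claim. Consequently there are no $\mathbf{C}_{\mathcal{L}_{0}}^{U}$-subquotients, the indecomposables are partitioned as $\mathcal{B}_{1}\sqcup\cdots\sqcup\mathcal{B}_{p}$, and each $\mathbf{M}(\mathrm{F}_{ab})$ with $a,b\in U$ preserves this partition (it sends $\on{add}\mathcal{B}_{s}$ into itself, being a summand of $\mathbf{M}(\mathrm{F}_{U\times U})$).

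\emph{Step 2 (the extra $1$-morphisms preserve the partition) and conclusion.} The heart of the argument is that the same holds for \emph{every} $\mathrm{F}_{ij}$ with $i\in U$, $j\in V$. Fix $X\in\mathcal{B}_{s}$; by Step~1 write $X\mid\mathbf{M}(\mathrm{F}_{ab})X'$ with $a,b\in U$ and $X'$ indecomposable, necessarily $X'\in\mathcal{B}_{s}$ since $\mathrm{F}_{ab}$ preserves the partition. Then $\mathbf{M}(\mathrm{F}_{ij})X$ is a summand of $\mathbf{M}(\mathrm{F}_{ij}\circ\mathrm{F}_{ab})X'$, and by \eqref{CAComposition} we have $\mathrm{F}_{ij}\circ\mathrm{F}_{ab}\simeq\mathrm{F}_{ib}^{\oplus\dim e_{j}Ae_{a}}$, which is either zero or a multiple of $\mathrm{F}_{ib}$, a $1$-morphism of $\csym{D}_{U\times U}$; in either case its image lies in $\on{add}\mathcal{B}_{s}$, so $\mathbf{M}(\mathrm{F}_{ij})$ sends $\on{add}\mathcal{B}_{s}$ into itself. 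Thus every $1$-morphism of $\csym{D}$ preserves $\mathcal{B}_{1}\sqcup\cdots\sqcup\mathcal{B}_{p}$. Transitivity of $\mathbf{M}$ over $\csym{D}$ now forces $p=1$: otherwise two indecomposables in distinct blocks could not be connected by $\mathbf{M}(\mathrm{F})$ for any $1$-morphism $\mathrm{F}$ (decomposing $\mathrm{F}$ into indecomposables, each factor keeps us inside one block). Hence $[\mathrm{F}_{U\times U}]_{\mathbf{M}}$ is a single block $[\mathrm{F}_{U\times U}]_{\mathbf{C}_{\mathcal{L}_{1}}^{U}}$ with no $\mathcal{L}_{0}$-part, so $\on{Res}_{\ccf{D}_{U\times U}}^{\ccf{D}}(\mathbf{M})$ has weak Jordan--H\"older length one, is equivalent to $\mathbf{C}_{\mathcal{L}_{1}}^{U}$, and in particular is transitive.

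I expect the main obstacle to be Step~2, namely eliminating the ``extra'' second index $j\in V\setminus U$. The mechanism that resolves it is that, by Step~1, $\mathrm{F}_{ij}$ need only be evaluated on objects already lying in the image of $\csym{D}_{U\times U}$, where the composite $\mathrm{F}_{ij}\circ\mathrm{F}_{ab}$ collapses via \eqref{CAComposition} to a $1$-morphism $\mathrm{F}_{ib}$ of $\csym{D}_{U\times U}$. The self-injective core hypothesis enters only indirectly, through Corollary~\ref{UDiagonalMatrices} (hence Proposition~\ref{DExtUDiagonal}), which is what guarantees that the $\mathcal{L}_{1}$-blocks are genuinely disconnected and may be separated in the first place.
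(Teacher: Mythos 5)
Your proof is correct, and it rests on the same two structural inputs as the paper's: the collapse relation \eqref{CAComposition}, which encodes that $\mathcal{S}(U\times U)$ absorbs composition with any $1$-morphism of $\csym{D}$ (it is a union of left cells of $\csym{D}$), and the block shape of $[\mathrm{F}_{U\times U}]_{\mathbf{M}}$ from Corollary~\ref{UDiagonalMatrices}. Your Step~1 reaches the same intermediate conclusion as the paper (every row of $[\mathrm{F}_{U\times U}]_{\mathbf{M}}$ is non-zero, so there is no $\mathcal{L}_{0}$-part), though pointwise: the paper instead observes that $\on{add}\left(\mathrm{F}_{U\times U}\cdot\mathbf{M}(\mathtt{i})\right)$ is $\csym{D}$-stable and non-zero, hence all of $\mathbf{M}(\mathtt{i})$ by transitivity. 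Where you genuinely diverge is the final collapsing step. The paper decategorifies: it writes $[\mathrm{F}_{\mathcal{S}(\ccf{D})}\circ\mathrm{F}_{U\times U}]_{\mathbf{M}}=[\mathrm{F}_{\mathcal{S}(\ccf{D})}]_{\mathbf{M}}\,[\mathrm{F}_{U\times U}]_{\mathbf{M}}$ and derives a contradiction between block-diagonality of the left-hand side and strict positivity of the product (all entries of $[\mathrm{F}_{\mathcal{S}(\ccf{D})}]_{\mathbf{M}}$ positive by transitivity, right factor non-negative with positive diagonal) unless there is a single block. You stay at the level of objects: having shown each $\mathbf{M}\mathrm{F}_{ab}$ with $a,b\in U$ preserves the blocks $\mathcal{B}_{1},\ldots,\mathcal{B}_{p}$, you upgrade this to every $\mathbf{M}\mathrm{F}_{ij}$ via $\mathrm{F}_{ij}\circ\mathrm{F}_{ab}\simeq\mathrm{F}_{ib}^{\oplus\dim e_{j}Ae_{a}}$ (including the degenerate case $e_{j}Ae_{a}=0$, which you handle correctly), and then bare transitivity of $\mathbf{M}$ forces $p=1$. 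This buys a small simplification: you never need the paper's assertion that \emph{all} entries of $[\mathrm{F}_{\mathcal{S}(\ccf{D})}]_{\mathbf{M}}$ are positive (which itself requires a little more than transitivity alone, since transitivity only produces \emph{some} connecting $1$-morphism, possibly the identity), whereas your partition argument uses only the existence of some connecting $1$-morphism.

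One caveat: your closing claim that $\on{Res}_{\ccf{D}_{U\times U}}^{\ccf{D}}(\mathbf{M})$ ``is equivalent to $\mathbf{C}_{\mathcal{L}_{1}}^{U}$'' is not justified at this point. From $p=1$ you get weak Jordan--H{\"o}lder length one, hence transitivity, which is all the proposition asserts; but only the unique simple transitive \emph{quotient} of the restriction is identified with $\mathbf{C}_{\mathcal{L}_{1}}^{U}$, and that identification requires Proposition~\ref{StronglyRegularUDiagonal} (this is exactly Corollary~\ref{RestrictionMatricesRank}, proved after the present proposition). Since nothing in your argument depends on this side remark, you should delete it or weaken it to a statement about the simple transitive quotient.
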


\begin{proof}
 From the description of the cell structure of $\csym{D}$ given in Section \ref{SuperDiagCellsSection}, we know that the set $\mathcal{S}(U\times U)$ of indecomposable $1$-morphisms of $\csym{D}_{U\times U}$ is a union of left cells of $\csym{D}$. Hence, for any $\mathrm{G} \in \mathcal{S}(\csym{D})$, we have $\mathrm{G} \circ \mathrm{F}_{U\times U} \in \on{add}\mathcal{S}(U \times U)$.
 
 This shows that the additive closure of the essential images of the functors in $\setj{\mathbf{M}\mathrm{F} \; | \; \mathrm{F} \in \mathcal{S}(U \times U)}$ is stable under the functorial action of $\csym{C}$ given by $\mathbf{M}$. Using action notation, we may write
 \[
  \on{add}\left( \csym{D} \cdot \left(\mathrm{F}_{U\times U}\cdot \mathbf{M}(\mathtt{i})\right) \right) = \on{add}\left( \left(\csym{D} \circ \mathrm{F}_{U\times U}\right) \cdot \mathbf{M}(\mathtt{i})\right) = \on{add}\left(\mathrm{F}_{U\times U}\cdot \mathbf{M}(\mathtt{i})\right).
 \]
 Since the apex of $\mathbf{M}$ is $\mathcal{J}_{1}$, we have $\on{add}\setj{\mathrm{F}_{U\times U}\cdot \mathbf{M}(\mathtt{i})} \neq 0$. Further, by assumption, $\mathbf{M}$ is transitive, which yields $\on{add}\setj{\mathrm{F}_{U\times U}\cdot \mathbf{M}(\mathtt{i})} = \mathbf{M}(\mathtt{i})$.
 
 This shows that all the rows of the action matrix $[\mathrm{F}_{U\times U}]_{\mathbf{M}}$ are non-zero, and, in view of Corollary \ref{UDiagonalMatrices}, we find that all simple transitive weak subquotients of $\on{Res}_{\ccf{D}_{U\times U}}^{\ccf{D}}(\mathbf{M})$ are equivalent to $\mathbf{C}_{\mathcal{L}_{1}}^{U}$. 
 Thus, if we let $k$ be the length of the weak Jordan-H{\" o}lder series of $\on{Res}_{\ccf{D}_{U\times U}}^{\ccf{D}}\mathbf{M}$,
 Corollary \ref{UDiagonalMatrices} implies that the matrix 
 $[\mathrm{F}_{U\times U}]_{\mathbf{M}} = [\mathrm{F}_{U\times U}]${\raisebox{-1.5pt}{$\scriptstyle{\on{Res}_{\scaleobj{0.72}{\ccf{D}_{U\times U}}}^{\scaleobj{0.72}{\ccf{D}}}(\mathbf{M})}$}} is the $k \times k$ block diagonal matrix
  \begin{displaymath}
 \left(
 \begin{array}{c|c|c}
  [F_{U\times U}]_{\mathbf{C}_{\mathcal{L}_{1}}} & 0 & 0 \\
  \hline
  0 & \ddots & 0 \\
  \hline
  0 & 0 & [F_{U\times U}]_{\mathbf{C}_{\mathcal{L}_{1}}}
 \end{array}
 \right).
\end{displaymath}
 Observe that all the entries of $[\mathrm{F}_{U\times U}]_{\mathbf{C}_{\mathcal{L}_{1}}}$ are positive integers. Due to the additivity of action matrices, we have $[\mathrm{F}_{U\times U}]_{\mathbf{M}} = \sum_{(i,j) \in U\times U} [\mathrm{F}_{ij}]_{\mathbf{M}}$. 
 From our earlier observations it follows that $\mathrm{F}_{\mathcal{S}(\ccf{D})} \circ \mathrm{F}_{U \times U}$ has a direct sum decomposition with summands in $\mathcal{S}(U\times U)$. As a consequence, there are non-negative integers $\setj{a_{ij} \; | \; (i,j) \in U\times U}$ such that
\[
 [\mathrm{F}_{\mathcal{S}(\ccf{D})}\circ \mathrm{F}_{U\times U}]_{\mathbf{M}} = \sum_{(i,j) \in U \times U} a_{ij} [\mathrm{F}_{ij}]_{\mathbf{M}}.
\]
 Since $\mathcal{S}(U \times U)$ is a subset of $\mathcal{S}(\csym{D})$, the integers $a_{ij}$ are positive, for all $i,j \in U$. Using what we know about $[\mathrm{F}_{U\times U}]_{\mathbf{M}}$, we can write
  \begin{displaymath}
 [\mathrm{F}_{\mathcal{S}(\ccf{D})}\circ \mathrm{F}_{U\times U}]_{\mathbf{M}} = \left(
 \begin{array}{c|c|c}
  \mathtt{C}_{1} & 0 & 0 \\
  \hline
  0 & \ddots & 0 \\
  \hline
  0 & 0 & \mathtt{C}_{k}
 \end{array}
 \right),
 \end{displaymath}
 where the entries of the block submatrices $\mathtt{C}_{1}, \ldots, \mathtt{C}_{k}$ are positive integers. From the biadditivity of composition of $1$-morphisms we have
 \[
  [\mathrm{F}_{\mathcal{S}(\ccf{D})}\circ \mathrm{F}_{U\times U}]_{\mathbf{M}} = [\mathrm{F}_{\mathcal{S}(\ccf{D})}]_{\mathbf{M}} [\mathrm{F}_{U\times U}]_{\mathbf{M}}.
 \]
  Since $\mathbf{M}$ is transitive, the entries of the left factor in this product are positive integers, and we have shown that the entries of the right factor are non-negative integers with positive diagonal entries. This shows that all the entries of the matrix $[\mathrm{F}_{\mathcal{S}(\ccf{D})}\circ \mathrm{F}_{U\times U}]_{\mathbf{M}}$ are positive. Thus $k = 1$, and so $\on{Res}_{\ccf{D}_{U\times U}}^{\ccf{D}}(\mathbf{M})$ is transitive.
\end{proof}

\begin{corollary}\label{RestrictionMatricesRank}
 The rank of $\mathbf{M}$ coincides with the rank of $\mathbf{C}_{\mathcal{L}_{1}}$, which by definition equals $|U|$.
 Further, the action matrices of $1$-morphisms of $\csym{D}_{U\times U}$ for $\mathbf{M}$ coincide with those for $\mathbf{C}_{\mathcal{L}_{1}}$.
\end{corollary}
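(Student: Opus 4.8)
The plan is to upgrade Proposition \ref{TransitiveRestriction} from a transitivity statement into an explicit identification, and then to apply that identification to the cell $2$-representation $\mathbf{C}_{\mathcal{L}_{1}}$ as well. The proof of Proposition \ref{TransitiveRestriction} in fact establishes that the length $k$ of the weak Jordan-H{\" o}lder series of $\on{Res}_{\ccf{D}_{U\times U}}^{\ccf{D}}(\mathbf{M})$ equals $1$, while Corollary \ref{UDiagonalMatrices} shows that each of its subquotients is equivalent to $\mathbf{C}_{\mathcal{L}_{1}}^{U}$. The first thing I would do is combine these two facts to conclude that the transitive $2$-representation $\on{Res}_{\ccf{D}_{U\times U}}^{\ccf{D}}(\mathbf{M})$ is already simple transitive, and hence that
\[
 \on{Res}_{\ccf{D}_{U\times U}}^{\ccf{D}}(\mathbf{M}) \simeq \mathbf{C}_{\mathcal{L}_{1}}^{U}.
\]

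For the rank, the key point is that restriction along the inclusion $\csym{D}_{U\times U} \hookrightarrow \csym{D}$ does not alter the underlying categories, so that $\on{Res}_{\ccf{D}_{U\times U}}^{\ccf{D}}(\mathbf{M})(\mathtt{i}) = \mathbf{M}(\mathtt{i})$. Thus the rank of $\mathbf{M}$ equals the rank of its restriction, which by the previous paragraph equals the rank of $\mathbf{C}_{\mathcal{L}_{1}}^{U}$. Since the $J$-cells of $\csym{D}_{U\times U}$ are strongly regular and the left cell $\mathcal{L}_{1} = \mathcal{S}(U\times\setj{j})$ has precisely $|U|$ elements, the cell $2$-representation $\mathbf{C}_{\mathcal{L}_{1}}^{U}$ has rank $|U|$. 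Running the same argument with $\mathbf{M}$ replaced by $\mathbf{C}_{\mathcal{L}_{1}}$ -- which is a transitive $2$-representation of $\csym{D}$ with apex $\mathcal{J}_{1}$, so that Proposition \ref{TransitiveRestriction} and its strengthening apply verbatim -- gives $\on{Res}_{\ccf{D}_{U\times U}}^{\ccf{D}}(\mathbf{C}_{\mathcal{L}_{1}}) \simeq \mathbf{C}_{\mathcal{L}_{1}}^{U}$, and hence that $\mathbf{C}_{\mathcal{L}_{1}}$ also has rank $|U|$. This yields the first assertion.

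For the action matrices, I would observe that for any $1$-morphism $\mathrm{F} \in \on{Ob}\csym{D}_{U\times U}(\mathtt{i,i})$ the functor $\mathbf{M}\mathrm{F}$ agrees, by the very definition of the restriction $2$-functor, with the action of $\mathrm{F}$ in $\on{Res}_{\ccf{D}_{U\times U}}^{\ccf{D}}(\mathbf{M})$, and similarly for $\mathbf{C}_{\mathcal{L}_{1}}$. Transporting the fixed labelling of indecomposables through the two equivalences with $\mathbf{C}_{\mathcal{L}_{1}}^{U}$ produced above then gives the chain of equalities
\[
 [\mathrm{F}]_{\mathbf{M}} = [\mathrm{F}]_{\on{Res}_{\ccf{D}_{U\times U}}^{\ccf{D}}(\mathbf{M})} = [\mathrm{F}]_{\mathbf{C}_{\mathcal{L}_{1}}^{U}} = [\mathrm{F}]_{\on{Res}_{\ccf{D}_{U\times U}}^{\ccf{D}}(\mathbf{C}_{\mathcal{L}_{1}})} = [\mathrm{F}]_{\mathbf{C}_{\mathcal{L}_{1}}},
\]
which is the second assertion.

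The only genuine subtlety, and the step I expect to demand the most care, is the bookkeeping of labellings, so that the action matrices literally coincide rather than agreeing merely up to a simultaneous permutation of rows and columns. I would handle this by fixing one labelling of the indecomposables of $\mathbf{M}(\mathtt{i})$, transporting it along $\on{Res}_{\ccf{D}_{U\times U}}^{\ccf{D}}(\mathbf{M}) \simeq \mathbf{C}_{\mathcal{L}_{1}}^{U}$ to pin down the labelling of $\mathbf{C}_{\mathcal{L}_{1}}^{U}$, and then using that same labelling of $\mathbf{C}_{\mathcal{L}_{1}}^{U}$ when comparing with $\mathbf{C}_{\mathcal{L}_{1}}$; with the labellings thus aligned, everything reduces to the transport of data along restriction and equivalence.
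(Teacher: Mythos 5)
Your overall route is the paper's (restrict to $\csym{D}_{U\times U}$, invoke Proposition \ref{TransitiveRestriction} together with the classification of Proposition \ref{StronglyRegularUDiagonal}, then transport rank and action matrices), but your first step contains a genuine logical gap. From the facts that the weak Jordan--H{\"o}lder length of $\on{Res}_{\ccf{D}_{U\times U}}^{\ccf{D}}(\mathbf{M})$ is $k=1$ and that the subquotient is equivalent to $\mathbf{C}_{\mathcal{L}_{1}}^{U}$, you conclude that the restriction is \emph{simple} transitive and hence equivalent to $\mathbf{C}_{\mathcal{L}_{1}}^{U}$. That inference is false: weak Jordan--H{\"o}lder length one is precisely transitivity, while simple transitivity is the strictly stronger condition that there are no nontrivial $\csym{D}_{U\times U}$-stable ideals, a condition invisible to the Jordan--H{\"o}lder (i.e.\ object-level) data. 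Nor does simple transitivity of $\mathbf{M}$ over $\csym{D}$ pass to the restriction: an ideal of $\mathbf{M}(\mathtt{i})$ stable under the smaller family of functors $\setj{\mathbf{M}\mathrm{F} \; | \; \mathrm{F} \in \on{Ob}\csym{D}_{U\times U}(\mathtt{i,i})}$ need not be stable under all of $\csym{D}$. So the equivalences $\on{Res}_{\ccf{D}_{U\times U}}^{\ccf{D}}(\mathbf{M}) \simeq \mathbf{C}_{\mathcal{L}_{1}}^{U}$ and $\on{Res}_{\ccf{D}_{U\times U}}^{\ccf{D}}(\mathbf{C}_{\mathcal{L}_{1}}) \simeq \mathbf{C}_{\mathcal{L}_{1}}^{U}$ are not justified by your argument; what you actually have is that the \emph{unique simple transitive quotient} of each restriction is equivalent to $\mathbf{C}_{\mathcal{L}_{1}}^{U}$, and that is exactly what the paper's proof uses.

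The good news is that your conclusions survive this weakening, because the projection of a transitive $2$-representation onto its simple transitive quotient is bijective on isomorphism classes of indecomposable objects: no identity morphism lies in the maximal stable ideal, so no indecomposable dies, and non-isomorphic indecomposables cannot become isomorphic (their endomorphism rings are local and the ideal is contained in the radical); this is the point recorded in the discussion around \eqref{EntryWiseIneq}. Consequently the quotient preserves both rank and all action matrices, and your chain of equalities $[\mathrm{F}]_{\mathbf{M}} = [\mathrm{F}]_{\mathbf{C}_{\mathcal{L}_{1}}^{U}} = [\mathrm{F}]_{\mathbf{C}_{\mathcal{L}_{1}}}$ goes through once each equivalence of restrictions is replaced by the equivalence of simple transitive quotients. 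One cosmetic difference: the paper reads the rank $|U|$ of $\mathbf{C}_{\mathcal{L}_{1}}$ and the coincidence $[\mathrm{F}]_{\mathbf{C}_{\mathcal{L}_{1}}} = [\mathrm{F}]_{\mathbf{C}_{\mathcal{L}_{1}}^{U}}$ directly off the explicit construction of the cell $2$-representation (``by definition''), whereas you rerun Proposition \ref{TransitiveRestriction} for $\mathbf{C}_{\mathcal{L}_{1}}$; that rerun is legitimate, since $\mathbf{C}_{\mathcal{L}_{1}}$ is transitive with apex $\mathcal{J}_{1}$, but it needs the same repair as above.
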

\begin{proof}
 Combining Proposition \ref{TransitiveRestriction} with Proposition \ref{StronglyRegularUDiagonal}, we conclude that the unique simple transitive quotient of $\on{Res}_{\ccf{D}_{U\times U}}^{\ccf{D}}(\mathbf{M})$ is equivalent to said cell $2$-representation. The claim follows.
\end{proof}

Assume that $\mathbf{M}$ is simple transitive.
Let $r:= |U|$. Without loss of generality, assume that $U = \rr{r}$. Let $Q$ be the basic algebra introduced in Section~\ref{CellSection}, satisfying $\mathbf{M}(\mathtt{i}) \simeq Q\!\on{-proj}$, together with the \idem\ described therein.
Given $s,t \in \rr{r}$, choose an endofunctor of $\overline{\mathbf{M}}(\mathtt{i}) \simeq Q\!\on{-mod}$ naturally isomorphic to the indecomposable projective functor $Qf_{s} \otimes_{\Bbbk} f_{t}Q \otimes_{Q} -$, and denote it by $\mathrm{G}_{st}$. This is consistent with the notation $\setj{\mathrm{F}_{ij} \; | \; i,j \in \rr{m}}$ for indecomposable $1$-morphisms of $\csym{C}_{\!A}$, since such $1$-morphisms correspond to the indecomposable projective endofunctors of $A\!\on{-mod}$. 
Using this notation, Lemma \ref{DiagonalActionProjective} implies
$\overline{\mathbf{M}}\mathrm{F} \in \on{Ob}\on{add}\setj{\mathrm{G}_{st} \; | \; s,t \in \rr{r}}$. 

We recall a notation convention introduced in \cite{MZ2}. Given $i,j \in U$, we write
\[
\begin{aligned}
 \mathtt{X}_{ij} = 
 \setj{
 \begin{aligned}
 &s \in \rr{r} \; | \; \text{there is } t \in \rr{r} \text{ such that } \mathrm{G}_{st}
 \text{ is}\\
 &\text{isomorphic to a direct summand of } \overline{\mathbf{M}}\mathrm{F}_{ij}
 \end{aligned}
 }, \\
  \mathtt{Y}_{ij} = 
 \setj{
 \begin{aligned}
 &t \in \rr{r} \; | \; \text{there is } s \in \rr{r} \text{ such that } \mathrm{G}_{st}
 \text{ is}\\
 &\text{isomorphic to a direct summand of } \overline{\mathbf{M}}\mathrm{F}_{ij}
 \end{aligned}
 }.
\end{aligned}
\]
Clearly, the essential image of $\mathrm{G}_{st}$ in $\overline{\mathbf{M}}(\mathtt{i})$ is given by $\on{add}\setj{Qe_{s}}$. Hence, $\mathtt{X}_{ij}$ coincides with the set of non-zero rows of $[\mathrm{F}_{ij}]_{\mathbf{M}}$. From Corollary \ref{RestrictionMatricesRank} we know that only the $i$th row of $[\mathrm{F}_{ij}]_{\mathbf{M}}$ is non-zero. Hence $\mathtt{X}_{ij} = \setj{i}$ for all $i,j \in U$. 
The following statement, as well as its proof, is completely analogous to \cite[Lemma~20]{MZ2} and \cite[Lemma~22]{MZ2}:
\begin{lemma}\label{MZ2Mimic}
 For any $i,j \in U$ we have $\mathtt{Y}_{ij} = \setj{j}$. Thus, $\mathbf{M}F_{ij} \simeq G_{ij}^{\oplus m_{ij}}$ for some positive integers $m_{ij}$.
\end{lemma}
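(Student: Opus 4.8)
The statement $\mathtt{X}_{ij} = \setj{i}$ established above lets us write $\overline{\mathbf{M}}\mathrm{F}_{ij} \simeq \bigoplus_{t \in \mathtt{Y}_{ij}} \mathrm{G}_{it}^{\oplus n^{ij}_{t}}$ with each $n^{ij}_{t} > 0$, so the whole lemma reduces to showing $\mathtt{Y}_{ij} = \setj{j}$: once this holds, only $\mathrm{G}_{ij}$ occurs and we may set $m_{ij} = n^{ij}_{j}$, which is positive because the apex of $\mathbf{M}$ is $\mathcal{J}_{1}$ and hence $\mathbf{M}\mathrm{F}_{ij} \neq 0$. The plan is to exploit the weak fiatness of $\csym{D}_{U \times U}$ through adjunction, exactly as in \cite[Lemma~20]{MZ2} and \cite[Lemma~22]{MZ2}: passing to right adjoints converts the (already available) control over the first index into control over the second.

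First I would record the relevant adjoints. Since $U$ is a self-injective core, for $j \in U$ there is $\widehat{j} \in U$ with $(e_{j}A)^{\ast} \simeq Ae_{\widehat{j}}$, and by Lemma \ref{AdjSelf} (equivalently, by the weak fiatness from Corollary \ref{DiagonalWeaklyFiat}) the $1$-morphism $\mathrm{F}_{\widehat{j}\,i}$ of $\csym{D}_{U\times U}$ is right adjoint to $\mathrm{F}_{ij}$. As $\overline{\mathbf{M}}$ is a $2$-functor it sends this to an adjunction, so $\overline{\mathbf{M}}\mathrm{F}_{\widehat{j}\,i}$ is right adjoint to $\overline{\mathbf{M}}\mathrm{F}_{ij}$; both being projective, hence exact, functors on $\overline{\mathbf{M}}(\mathtt{i}) \simeq Q\!\on{-mod}$ by Lemma \ref{DiagonalActionProjective}, this is an honest adjunction of endofunctors. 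On the other hand, a direct tensor-hom computation identifies the right adjoint of $\mathrm{G}_{st}$ with the functor $(f_{t}Q)^{\ast}\otimes_{\Bbbk}f_{s}Q\otimes_{Q}-$.

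The main point — and the step I expect to cost the most — is to recognize this right adjoint as another $\mathrm{G}$. For that I need $(f_{t}Q)^{\ast}$ to be projective, i.e. $Q$ to be self-injective, and I need its Nakayama permutation to coincide with $\widehat{\,\cdot\,}$. Both are within reach: the restriction of $\mathbf{M}$ to $\csym{D}_{U\times U}$ is transitive by Proposition \ref{TransitiveRestriction}, its unique simple transitive quotient is the cell $2$-representation on $e_{U}Ae_{U}\!\on{-proj}$ by Corollary \ref{RestrictionMatricesRank}, and $e_{U}Ae_{U}$ is self-injective by the centralizer proposition above; alternatively, self-injectivity of $Q$ can be bootstrapped from the adjunction itself, since the existence of a projective right adjoint for every $\overline{\mathbf{M}}\mathrm{F}_{ij}$ forces $(f_{t}Q)^{\ast}$ to be projective for each $t \in \mathtt{Y}_{ij}$, and these indices cover $U$ by transitivity. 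Comparing the $\csym{D}_{U\times U}$-adjunction $\mathrm{F}_{ij} \dashv \mathrm{F}_{\widehat{j}\,i}$ with the module-level adjunction then pins the Nakayama permutation of $Q$ to $\widehat{\,\cdot\,}$, so that the right adjoint of $\mathrm{G}_{st}$ is $\mathrm{G}_{\widehat{t}\,s}$.

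With this in hand the conclusion is immediate: taking right adjoints of $\overline{\mathbf{M}}\mathrm{F}_{ij} \simeq \bigoplus_{t}\mathrm{G}_{it}^{\oplus n^{ij}_{t}}$ yields $\overline{\mathbf{M}}\mathrm{F}_{\widehat{j}\,i} \simeq \bigoplus_{t}\mathrm{G}_{\widehat{t}\,i}^{\oplus n^{ij}_{t}}$, so the set of first indices occurring on the right is $\setj{\widehat{t} \; | \; t \in \mathtt{Y}_{ij}}$. But by the general fact $\mathtt{X}_{ab} = \setj{a}$ this set equals $\mathtt{X}_{\widehat{j}\,i} = \setj{\widehat{j}}$; since $\widehat{\,\cdot\,}$ is a bijection of $U$, we obtain $\mathtt{Y}_{ij} = \setj{j}$, whence $\overline{\mathbf{M}}\mathrm{F}_{ij} \simeq \mathrm{G}_{ij}^{\oplus m_{ij}}$ and, on projectives, $\mathbf{M}\mathrm{F}_{ij} \simeq \mathrm{G}_{ij}^{\oplus m_{ij}}$. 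I would double-check the edge case $i = j$, where Lemma \ref{AdjSelf} is stated for orthogonal idempotents, by appealing directly to the weak fiatness of $\csym{D}_{U\times U}$ for the adjunction rather than to that lemma.
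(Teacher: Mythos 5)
Your overall route---applying $\overline{\mathbf{M}}$ to the adjunction $\mathrm{F}_{ij}\dashv\mathrm{F}_{\widehat{j}\,i}$ in $\csym{D}_{U\times U}$ and comparing with the bimodule-level right adjoint $(f_{t}Q)^{\ast}\otimes_{\Bbbk}f_{s}Q\otimes_{Q}-$ of $\mathrm{G}_{st}$---is exactly the one the paper intends: the paper gives no proof beyond citing \cite[Lemma~20]{MZ2} and \cite[Lemma~22]{MZ2}, whose arguments are of this adjunction-plus-Krull--Schmidt type. However, as written your deduction is circular at the step you yourself flag as the costliest. Comparing $\bigoplus_{t\in\mathtt{Y}_{ij}}\bigl((f_{t}Q)^{\ast}\otimes_{\Bbbk}f_{i}Q\bigr)^{\oplus n^{ij}_{t}}$ with $\overline{\mathbf{M}}\mathrm{F}_{\widehat{j}\,i}\simeq\bigoplus_{s\in\mathtt{Y}_{\widehat{j}\,i}}\bigl(Qf_{\widehat{j}}\otimes_{\Bbbk}f_{s}Q\bigr)^{\oplus n'_{s}}$ (the first index being $\widehat{j}$ by $\mathtt{X}_{\widehat{j}\,i}=\setj{\widehat{j}}$) pins down, for each $t\in\mathtt{Y}_{ij}$, only that $(f_{t}Q)^{\ast}\simeq Qf_{\widehat{j}}$, i.e.\ $\nu_{Q}(t)=\widehat{j}$ for the (partially defined) Nakayama permutation $\nu_{Q}$ of $Q$. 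This already forces $\mathtt{Y}_{ij}$ to be a singleton, since duality is injective on isoclasses, but concluding that the singleton is $\setj{j}$ from ``first indices $=\setj{\widehat{j}}$'' requires $\nu_{Q}(j)=\widehat{j}$, which is precisely equivalent to the statement being proved and is nowhere established. Your first proposed justification, via Corollary \ref{RestrictionMatricesRank} and self-injectivity of $e_{U}Ae_{U}$, cannot close this: at this stage one only has the Cartan inequality \eqref{EntryWiseIneq} relating $Q$ and $e_{U}Ae_{U}$; the identification $\mathtt{C}^{Q}=\mathtt{C}^{eAe}$ (let alone any matching of Nakayama data) is a consequence of Theorem \ref{MainResult}, not an input to it. The covering claim ``the indices in the $\mathtt{Y}_{ij}$ exhaust $U$ by transitivity'' is likewise unsubstantiated, though harmless, since you never need full self-injectivity of $Q$---only projectivity of $(f_{t}Q)^{\ast}$ for $t\in\mathtt{Y}_{ij}$, which your bootstrap from Lemma \ref{DiagonalActionProjective} does give.

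The repair stays entirely inside your own comparison: read off the \emph{second} tensor factors instead of the first. Since $Q$ is basic and each $Qf_{a}\otimes_{\Bbbk}f_{b}Q$ (and each $(f_{t}Q)^{\ast}\otimes_{\Bbbk}f_{i}Q$) is an indecomposable bimodule, Krull--Schmidt forces every summand $Qf_{\widehat{j}}\otimes_{\Bbbk}f_{s}Q$ of $\overline{\mathbf{M}}\mathrm{F}_{\widehat{j}\,i}$ to satisfy $f_{s}Q\simeq f_{i}Q$, hence $s=i$, so $\mathtt{Y}_{\widehat{j}\,i}=\setj{i}$ with no knowledge of $\nu_{Q}$ required. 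Because $U$ is a self-injective core, $j\mapsto\widehat{j}$ is injective on the finite set $U$, hence bijective, so $(i,j)\mapsto(\widehat{j},i)$ ranges over all of $U\times U$ and $\mathtt{Y}_{ab}=\setj{b}$ for all $a,b\in U$; the identity $\nu_{Q}|_{U}=\widehat{\,\cdot\,}$ then drops out as a corollary rather than serving as an input. With this reordering your argument is complete and agrees with the intended MZ2-style proof; your treatment of the multiplicities $m_{ij}$ and of the case $i=j$ (invoking weak fiatness via Corollary \ref{DiagonalWeaklyFiat} rather than the orthogonality hypothesis of Lemma \ref{AdjSelf}) is fine.
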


\begin{theorem}\label{MainResult}
 Let $\csym{D}$ be a $U$-superdiagonal $2$-subcategory of $\csym{C}_{A}$.  Any simple transitive $2$-representation of $\csym{D}$ is equivalent to a cell $2$-representation.
\end{theorem}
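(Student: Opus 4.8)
The plan is to reduce everything to the diagonal subcategory $\csym{D}_{U\times U}$ and then invoke the standard argument of Lemma \ref{StdArg}. If the apex of $\mathbf{M}$ is $\mathcal{J}_0$ the statement is immediate, exactly as in \cite[Theorem~6.2]{Zi}; and after passing to the quotient by the vacuous cells (which every simple transitive $2$-representation annihilates) I may assume the apex is $\mathcal{J}_1$. By Proposition \ref{TransitiveRestriction} and Corollary \ref{RestrictionMatricesRank}, $\on{Res}_{\csym{D}_{U\times U}}^{\csym{D}}(\mathbf{M})$ is then transitive of rank $|U|$, its unique simple transitive quotient is $\mathbf{C}_{\mathcal{L}_1}^{U}$, and the action matrices of the diagonal $1$-morphisms agree with those of the cell $2$-representation. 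Writing $\mathbf{M}(\mathtt{i})\simeq Q\!\on{-proj}$, Lemma \ref{MZ2Mimic} gives $\mathbf{M}\mathrm{F}_{ij}\simeq\mathrm{G}_{ij}^{\oplus m_{ij}}$ for all $i,j\in U$.

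Next I would fix $j_0\in U$ and take $\mathcal{L}=\mathcal{S}(U\times\setj{j_0})$, a left cell inside $\mathcal{J}_1$ all of whose $1$-morphisms lie in $\csym{D}_{U\times U}$, and aim to apply Lemma \ref{StdArg} with $l=j_0$. Both of its hypotheses reduce to the single assertion that $m_{ij}=1$. Indeed, $2$-functoriality applied to the composition rule \eqref{CAComposition}, $\mathrm{F}_{ij}\circ\mathrm{F}_{kl}\simeq\mathrm{F}_{il}^{\oplus\dim e_jAe_k}$, yields $m_{ij}m_{kl}\dim f_jQf_k=m_{il}\dim e_jAe_k$, whereas matching action matrices with $\mathbf{C}_{\mathcal{L}_1}^{U}$ gives $m_{ij}\dim f_jQf_c=\dim e_jAe_c$ for all $c\in U$. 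Together these force $m_{ij}$ to be independent of $i$ and show that $m_{ij}=1$ holds if and only if $\dim f_jQf_k=\dim e_jAe_k$, i.e. if and only if the Cartan matrix of $\mathbf{M}(\mathtt{i})$ coincides with that of $\on{add}(\mathcal{L})$, namely the Cartan matrix of $e_UAe_U$. The inequality $\dim f_jQf_k\geq\dim e_jAe_k$ is free from \eqref{EntryWiseIneq}, since $\mathbf{C}_{\mathcal{L}_1}^{U}$ is the simple transitive quotient of the restriction.

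The crux is therefore to prove $m_{ij}=1$, equivalently that the ideal $\mathbf{I}=\ker\big(\on{Res}_{\csym{D}_{U\times U}}^{\csym{D}}(\mathbf{M})\to\mathbf{C}_{\mathcal{L}_1}^{U}\big)$, corresponding to $\mathfrak{a}=\ker(Q\twoheadrightarrow e_UAe_U)\subseteq\on{rad}Q$, vanishes. This is the one step that genuinely uses that $\mathbf{M}$ is simple transitive over all of $\csym{D}$, and not merely over $\csym{D}_{U\times U}$. Were $\mathfrak{a}\neq0$, the ideal $\mathbf{I}$ would be a proper nonzero $\csym{D}_{U\times U}$-stable ideal lying inside the radical; the idea is to show that it is in fact stable under the whole of $\csym{D}$, whence simple transitivity forces $\mathbf{I}=0$. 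Stability under the diagonal $1$-morphisms is automatic; the content is stability under the remaining indecomposables $\mathrm{F}_{kj}$ with $k\in U$, $j\in V\setminus U$, lying in $\mathcal{J}_1$. Here the identity $\mathrm{F}_{kk}\circ\mathrm{F}_{kj}\simeq\mathrm{F}_{kj}^{\oplus\dim e_kAe_k}$ forces the first index of the output to be $k$, so that $\mathbf{M}\mathrm{F}_{kj}\simeq Qf_k\otimes_{\Bbbk}W\otimes_Q-$ for a right $Q$-module $W=W_{kj}$ with $\dim Wf_c=\dim e_jAe_c$, and one checks that $\mathbf{M}\mathrm{F}_{kj}$ preserves $\mathbf{I}$ precisely when $W\mathfrak{a}=0$, i.e. when $W$ descends to an $e_UAe_U$-module. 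Establishing that the action of these extra $1$-morphisms is compatible with the cell quotient is the main obstacle; it is exactly where the asymmetry between the assumptions $U\subseteq V$ and $V\subseteq U$ becomes visible, in accordance with the failure of the statement in the $U$-subdiagonal case recorded via \cite[Theorem~5.10]{St}.

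Granting $\mathbf{I}=0$, the two hypotheses of Lemma \ref{StdArg} hold, so $\mathbf{M}\simeq\mathbf{C}_{\mathcal{L}}$. Finally, Proposition \ref{Cell2Reps} shows that all cell $2$-representations with apex $\mathcal{J}_1$ are equivalent, so the conclusion does not depend on the choice of $j_0$; together with the $\mathcal{J}_0$ case this completes the proof.
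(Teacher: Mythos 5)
Your overall route is exactly the paper's: dispose of apex $\mathcal{J}_{0}$ and the vacuous cells, use Proposition \ref{TransitiveRestriction} and Corollary \ref{RestrictionMatricesRank} to control the restriction to $\csym{D}_{U\times U}$, get $\mathbf{M}\mathrm{F}_{ij}\simeq\mathrm{G}_{ij}^{\oplus m_{ij}}$ from Lemma \ref{MZ2Mimic}, and finish with Lemma \ref{StdArg}. The problem is your third paragraph: the ``crux'' you set up there is not actually open at the point you reach it. In your second paragraph you have already recorded both $m_{ij}\dim f_{j}Qf_{c}=\dim e_{j}Ae_{c}$ for all $c\in U$ (this is \eqref{CartanMatrixEqualities} in the paper) and $\dim f_{j}Qf_{c}\geq\dim e_{j}Ae_{c}$ (this is \eqref{IneqConsequence}, coming from \eqref{EntryWiseIneq}). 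Since $m_{ij}\geq 1$ by Lemma \ref{MZ2Mimic}, taking $c=j$ gives $\dim f_{j}Qf_{j}\leq m_{ij}\dim f_{j}Qf_{j}=\dim e_{j}Ae_{j}\leq\dim f_{j}Qf_{j}$, and since $\dim f_{j}Qf_{j}\geq 1$ this forces $m_{ij}=1$; then \eqref{CartanMatrixEqualities} gives $\mathtt{C}^{Q}_{jc}=\mathtt{C}^{eAe}_{jc}$ for all $j,c\in U$. This two-line combination is precisely how the paper concludes, and with it both hypotheses of Lemma \ref{StdArg} hold, so $\mathbf{M}\simeq\mathbf{C}_{\mathcal{L}}$ with no further input.

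Consequently the only genuine gap in your write-up is the one you yourself flag: the $\csym{D}$-stability of the ideal $\mathbf{I}$, i.e.\ the claim $W_{kj}\mathfrak{a}=0$ for the $1$-morphisms $\mathrm{F}_{kj}$ with $j\in V\setminus U$, is never established, and your conclusion is stated only conditionally (``Granting $\mathbf{I}=0$''). As a strategy it is also close to circular, since showing that each $W_{kj}$ descends to an $e_{U}Ae_{U}$-module is essentially equivalent to what you are trying to prove; but more to the point, it is unnecessary. A related framing correction: it is not true that this putative ideal step is ``the one step that genuinely uses'' the hypotheses on $\mathbf{M}$ over all of $\csym{D}$. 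Transitivity of $\mathbf{M}$ over the whole of $\csym{D}$ is what makes Proposition \ref{TransitiveRestriction} work (positivity of the entries of $[\mathrm{F}_{\mathcal{S}(\ccf{D})}]_{\mathbf{M}}$ is what collapses the block decomposition to $k=1$), and simple transitivity is used through Lemma \ref{DiagonalActionProjective}, Lemma \ref{MZ2Mimic} and inside the proof of Lemma \ref{StdArg} --- all of which you already invoke. Deleting your third paragraph and inserting the computation above turns your proposal into the paper's proof; your final appeal to Proposition \ref{Cell2Reps} for independence of the choice of $j_{0}$ is fine.
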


\begin{proof}
 In view of our prior observations regarding $2$-representations with apex $\mathcal{J}_{0}$, it suffices to show the claim for the above described simple transitive $2$-representation $\mathbf{M}$.
 Let $\mathtt{C}^{Q}$ denote the Cartan matrix of $Q\!\on{-proj}$ and let $\mathtt{C}^{eAe}$ denote the Cartan matrix of $eAe\!\on{-proj}$, where $e$ is the idempotent $e_{U} = \sum_{i \in U}e_{i}$ of $A$. 
 By definition we have $([\mathrm{F}_{ij}]_{\mathbf{C}_{\mathcal{L}_{1}}})_{il} = ([\mathrm{F}_{ij}]_{\mathbf{C}_{\mathcal{L}_{1}}^{U}})_{il} = \mathtt{C}^{eAe}_{jl}$, for $i,j,l \in U$.
 
 Corollary \ref{RestrictionMatricesRank} yields $[\mathrm{F}_{ij}]_{\mathbf{M}} = [\mathrm{F}_{ij}]_{\mathbf{C}_{\mathcal{L}_{1}}}$. By
 Lemma \ref{MZ2Mimic}, we have $([\mathrm{F}_{ij}]_{\mathbf{M}})_{il} = m_{ij}\mathtt{C}^{Q}_{jl}$. 
  To summarize, for $i,j,l \in U$, we have
 \begin{equation}\label{CartanMatrixEqualities}
  \mathtt{C}_{jl}^{eAe} = ([\mathrm{F}_{ij}]_{\mathbf{C}_{\mathcal{L}_{1}}})_{il} = ([\mathrm{F}_{ij}]_{\mathbf{M}})_{il} = m_{ij}\mathtt{C}^{Q}_{jl}.
 \end{equation}
 Proposition \ref{TransitiveRestriction} shows that the unique simple transitive quotient of $\on{Res}_{\ccf{D}_{U\times U}}^{\ccf{D}}(\mathbf{M})$ is equivalent to $\mathbf{C}_{\mathcal{L}_{1}}^{U}$. Hence the inequality \eqref{EntryWiseIneq} implies that
 \begin{equation}\label{IneqConsequence}
  \mathtt{C}^{Q}_{ij} \geq \mathtt{C}^{eAe}_{ij} \text{, for } i,j \in U.
 \end{equation}
 Combining \eqref{CartanMatrixEqualities} and \eqref{IneqConsequence} we see that $m_{ij} = 1$ and $\mathtt{C}^{Q}_{jl} = \mathtt{C}^{eAe}_{jl}$, for $i,j,l \in U$. The result now follows from Lemma \ref{StdArg}.
\end{proof}

\subsection{Generalization to \texorpdfstring{$\csym{C}_{A,X}$}{C_{A,X}}}\label{CAX}
Let $Z$ be the algebra of $2$-endomorphisms of $\mathbb{1}_{\mathtt{i}} \in \on{Ob}\csym{C}_{A}(\mathtt{i},\mathtt{i})$ which factor through any $1$-morphism in $\on{add}\mathcal{S}(\rr{m} \times \rr{m})$. Given any subalgebra $X$ of $\on{End}_{\ccf{C}_{\!A}(\mathtt{i,i})}(\mathbb{1}_{\mathtt{i}})$ containing $Z$, the $2$-category $\csym{C}_{A,X}$, initially introduced in \cite[Section~4.5]{MM3}, is defined by having the same collection of $1$-morphisms as $\csym{C}_{A}$ and the same spaces of $2$-morphisms as $\csym{C}_{A}$ except $\on{End}_{\ccf{C}_{A,X}(\mathtt{i,i})}(\mathbb{1}_{\mathtt{i}}) = X$.

Importantly, $\csym{C}_{A,X}$ contains all adjunction $2$-morphisms of $\csym{C}_{A}$, and thus the inclusion $2$-functor $\csym{C}_{A,X} \hookrightarrow \csym{C}_{A}$ yields a bijection between adjunctions in the former and the latter. Observe that beyond our use of adjunctions, our arguments do not involve $\on{End}_{\ccf{C}_{\!A}(\mathtt{i,i})}(\mathbb{1}_{\mathtt{i}})$, hence our results generalize verbatim from $\csym{C}_{A}$ to $\csym{C}_{A,X}$, for any $X$.

\section{Self-injective cores and Duflo involutions}\label{s5}

One of the central tools used in the study of finitary $2$-repre\-sentations of weakly fiat $2$-cate\-gories is the use of {\it Duflo involutions}. Importantly, these were used for the first definition of a cell $2$-representation in \cite[Section~4.5]{MM1}. Other important applications and connections with other $2$-representation theoretic concepts can be found in \cite[Section~4]{MMMTZ2} and \cite[Section~6.3]{MMMT}.

Let $\csym{C}$ be a weakly fiat $2$-cate\-gory. Let $\mathtt{i} \in \on{Ob}\csym{C}$, and let $\mathcal{L}$ be a left cell of $\csym{C}$ whose elements have $\mathtt{i}$ as their domain. Consider the abelianized principal $2$-repre\-sentation $\overline{\mathbf{P}}_{\mathtt{i}} = \overline{\csym{C}(\mathtt{i},-)}$. Let $\hat{P}_{\mathbb{1}_{\mathtt{i}}}$ be the image of $\mathbb{1}_{\mathtt{i}}$ under the canonical embedding $\mathbf{P}_{\mathtt{i}}(\mathtt{i}) \hookrightarrow \overline{\mathbf{P}}_{\mathtt{i}}(\mathtt{i})$. By \cite[Proposition~27]{MM6}, there is a unique submodule $K$ of $\hat{P}_{\mathbb{1}_{\mathtt{i}}}$ such that every simple subquotient $\hat{P}_{\mathbb{1}_{\mathtt{i}}}/K$ is annihilated by any $\mathrm{F} \in \mathcal{L}$, and such that $K$ has a simple top $L$, with $\overline{\mathbf{P}}_{\mathtt{i}}\mathrm{F}(L) \neq 0$, for any $\mathrm{F} \in \mathcal{L}$. The projective cover $\mathrm{G}_{\mathcal{L}}$ of $K$ lies in $\overline{\mathbf{P}}_{\mathtt{i}}(\mathtt{i})\!\on{-proj} \simeq \mathbf{P}_{\mathtt{i}}(\mathtt{i}) \simeq \csym{C}(\mathtt{i},\mathtt{i})$. Abusing notation, we denote by $\mathrm{G}_{\mathcal{L}}$ the $1$-morphism of $\csym{C}$ corresponding to $\mathrm{G}_{\mathcal{L}}$ under this equivalence. The {\it Duflo involution in $\mathcal{L}$} is the $1$-morphism $\mathrm{G}_{\mathcal{L}}$.

Let $\csym{C}$ be a weakly fiat $2$-category with a unique object and let $\mathcal{J}$ be a strongly regular idempotent $J$-cell of $\csym{C}$. Using \cite[Theorem~4.28]{MMMTZ2}, rather than study the simple transitive $2$-representations of $\csym{C}$ with apex $\mathcal{J}$, one may equivalently study the simple transitive $2$-representations of an associated $2$-category $\csym{C}_{\mathcal{J}}$ with apex $\mathcal{J}$. Indeed, the respective $2$-categories of simple transitive $2$-representations are biequivalent. Further, this biequivalence sends cell $2$-representations to cell $2$-representations.

By definition, $\csym{C}_{\mathcal{J}}$ has only two $J$-cells - one containing $\mathbb{1}_{\mathtt{i}}$ and one given by $\mathcal{J}$. Further, it is $\mathcal{J}$-simple (see \cite[Section~6.2]{MM2}). It follows from \cite[Theorem~32]{MM6} that $\csym{C}_{\mathcal{J}}$ is biequivalent to $\csym{C}_{A,X}$, for a weakly symmetric $A$ and a suitable algebra $X \subseteq \on{End}_{\ccf{C}_{A}(\mathtt{i,i})}(\mathbb{1}_{\mathtt{i}})$. For our purposes we may thus restrict ourselves to the case $\csym{C} := \csym{C}_{A,X}$.

Let $\mathcal{L}$ be a left cell of $\mathcal{J}$. Let $\mathrm{G}_{\mathcal{L}}$ be the Duflo involution in $\mathcal{L}$, and let $L_{\mathrm{G}_{\mathcal{L}}}$ be the simple top of the image of $\mathrm{G}$ in $\overline{\mathbf{P}}_{\mathtt{i}}$, which is used to define the Duflo involution. Following \cite[Proposition~22]{MM2}, the finitary $2$-subrepresentation of $\overline{\mathbf{P}}_{\mathtt{i}}$ given by $\on{add}\setj{\overline{\mathbf{P}}_{\mathtt{i}}(\mathrm{F}\mathrm{G}_{\mathcal{L}})L_{\mathrm{G}_{\mathcal{L}}} \; | \; \mathrm{F} \in \csym{C}(\mathtt{i,i})}$ is equivalent to $\mathbf{C}_{\mathcal{L}}$. We conclude that $\overline{\mathbf{P}}_{\mathtt{i}}(\mathrm{G}_{\mathcal{L}})L_{\mathrm{G}_{\mathcal{L}}}$ is a generator for $\mathbf{C}_{\mathcal{L}}$ in the sense of \cite[Definition~2.19]{MMMTZ1}.

Given a $1$-morphism $\mathrm{F}$ of $\csym{C}$, let ${}^{\ast}\mathrm{F}$ denote its left adjoint. By \cite[Proposition~28]{MM6}, the right cell ${}^{\ast}\!\mathcal{L} = \setj{{}^{\ast}\mathrm{F} \; | \; \mathrm{F} \in \mathcal{L}}$ contains $\mathrm{G}_{\mathcal{L}}$. For ${}^{\ast}\mathrm{F} \in {}^{\ast}\!\mathcal{L}$, we have $({}^{\ast}\mathrm{F})\mathrm{G}_{\mathcal{L}} \in \on{add}\setj{\mathrm{G}_{\mathcal{L}}}$. Further, if $\mathrm{F} \in \mathcal{S}(\csym{C})\setminus\left(\setj{\mathbb{1}_{\mathtt{i}}}\cup {}^{\ast}\!\mathcal{L}\right)$, then an immediate consequence of \cite[Proposition~17{$(b)$}]{MM1} is that
$\overline{\mathbf{P}}_{\mathtt{i}}\left(\mathrm{F}\mathrm{G}_{\mathcal{L}}\right)L_{\mathrm{G}_{\mathcal{L}}}$ has no summand in $\on{add}\setj{\mathrm{G}_{\mathcal{L}}L_{\mathrm{G}_{\mathcal{L}}}}$. 

As a consequence, the superdiagonal $2$-subcategory $\csym{D}_{{}^{\ast}\!\mathcal{L}}$ of $\csym{C}$ given by the right cell ${}^{\ast}\!\mathcal{L}$ is precisely the $2$-subcategory of $\csym{C}$ {\it stabilizing} the subcategory $\on{add}\setj{\mathrm{G}_{\mathcal{L}}L_{\mathrm{G}_{\mathcal{L}}}}$, the additive closure of the generator of $\mathbf{C}_{\mathcal{L}}$. If $\csym{C}$ is fiat, then $\csym{D}_{{}^{\ast}\!\mathcal{L}}$ is additionally a self-injective core, and so, using the generalization of Theorem~\ref{MainResult} from Section~\ref{CAX}, we conclude that $\on{add}\setj{\mathrm{G}_{\mathcal{L}}L_{\mathrm{G}_{\mathcal{L}}}}$ is uniquely characterized as the target for the unique (up to equivalence) simple transitive $2$-representation of $\csym{D}_{{}^{\ast}\!\mathcal{L}}$ which does not annihilate the Duflo involution $\mathrm{G}_{\mathcal{L}}$.

More generally, given a self-injective core $U \subseteq \rr{m}$ such that $\mathrm{G}_{\mathcal{L}}$ lies in $\csym{D}_{U \times U}$, the $2$-subcategory $\csym{D}_{U \times \rr{m}}$ is precisely the $2$-subcategory stabilizing 
\[
\on{add}\setj{\overline{\mathbf{P}}_{\mathtt{i}}(\mathrm{F}\mathrm{G}_{\mathcal{L}})L_{\mathrm{G}_{\mathcal{L}}} \; | \; \mathrm{F} \in \mathcal{R}_{j} \text{, for some } j \in U},
\]
and, again as a consequence of Theorem~\ref{MainResult}, this category is characterized as the target for the unique simple transitive $2$-representation of $\csym{D}_{U \times \rr{m}}$ not annihilating $\mathrm{G}_{\mathcal{L}}$.

\noindent
Department of Mathematics, Uppsala University, Box. 480, SE-75106, Uppsala, SWEDEN,
email: {\tt mateusz.stroinski\symbol{64}math.uu.se}

\end{document}